\newcommand{\Bb}{\mathbb{B}}
\newcommand{\Zz}{\mathbb{Z}}
\newcommand{\Rr}{\mathbb{R}}
\newcommand{\Aop}{\mathfrak{A}}
\newcommand{\Xop}{\mathfrak{X}}
\newcommand{\DUC}{\mathfrak{D}}
\DeclareMathOperator{\dist}{dist}
\DeclareMathOperator{\Cl}{Cl}
\DeclareMathOperator{\supp}{supp}
\newtheorem{thm}{Theorem}
\newtheorem{lem}[thm]{Lemma}
\newtheorem{conj}[thm]{Conjecture}
\newtheorem{quest}[thm]{Question}
\newtheorem{statement}[thm]{Proposition}
\theoremstyle{definition}
\newtheorem{defin}[thm]{Definition}
\newtheorem{example}[thm]{Example}
\newtheorem{remark}[thm]{Remark}
\begin{document}

\title{Generalized hyperbolicity for diffeomorphisms of Banach spaces}

\author{ Sergey Tikhomirov 
\footnote{Pontificia Universidade Catolica do Rio de Janeiro (PUC-Rio); Institute of Mathematics, Statistics and Computer Science, University of São Paulo (IME-USP), Rua do Matão, 1010 – ZIP CODE 05508-090 – São Paulo – SP sergey.tikhomirov@gmail.com}
}

\date{}

\maketitle

\vspace{-29pt}

\begin{abstract}

We introduce generalized hyperbolicity for nonlinear dynamics in Banach spaces. The definition allows the stable/unstable splitting to be discontinuous and requires only inclusions, rather than equalities, in the invariance conditions for both subspaces.
On smooth compact manifolds, generalized hyperbolicity is equivalent to Axiom~A and the strong transversality condition, providing a finite-dimensional calibration of the proposed Banach-space theory.
For $C^1$-diffeomorphisms of the whole Banach space such that $Df$ and $D(f^{-1})$ are globally bounded and $Df$ is uniformly continuous, we establish the principal dynamical consequences of generalized hyperbolicity: Lipschitz shadowing, density of periodic points in the chain-recurrent set, and robustness under perturbations small in the uniform $C^1$ distance. 
The shadowing result requires no continuity of the splitting, and shadowing trajectories need not be unique.
Under the additional assumption that the splitting is uniformly continuous, we prove semi-structural stability. 
If, in addition, the subspace $E^s_x\oplus (Df(x))^{-1}E^u_{f(x)}$ is independent of $x$, we obtain structural stability.

\end{abstract}

\noindent\textbf{Keywords:} generalized hyperbolicity, dynamics in Banach spaces, shadowing, robustness, structural stability.\\
\textbf{MSC 2020: }{37D20, 46B20, 37C50, 37C75.}

\section{Introduction}

Hyperbolicity is one of the central notions in the qualitative theory of dynamical systems. It was introduced in the pioneering work of D. Anosov \cite{Anosov} on the structural stability of geodesic flows on manifolds with negative curvature. 
Later, hyperbolicity played a crucial role in the characterization of structurally stable diffeomorphisms on smooth compact manifolds: structural stability is equivalent to Axiom A and the strong transversality condition \cite{Mane1982, Mane1987, Rob71, Robinson1976}. 

For infinite-dimensional dynamical systems, the notion of hyperbolicity is more subtle. Let us illustrate the difference at the level of linear maps. An invertible linear map $T: \Rr^n \to \Rr^n$ is called hyperbolic if there exist $k > 0$ and a splitting $E^s \oplus E^u = \Rr^n$ such that
\begin{equation}\label{eq:HypIncl}
    T E^s = E^s, \quad T E^u = E^u,
\end{equation}
\begin{equation}\label{eq:HypContr}
    \mbox{$T|^k_{E^s}$, $T|^{-k}_{E^u}$ are contractions}.
\end{equation}
Such maps are known to be locally structurally stable (Grobman–Hartman theorem), to have the shadowing property, and to be expansive \cite{KatokHas}. 
Recently, the notion of generalized hyperbolicity for Banach spaces was introduced in \cite{Puj18, Puj21}. 

Throughout the paper, $\Bb$ denotes an arbitrary Banach space, with no reflexivity or separability assumption unless explicitly stated
otherwise.
\begin{defin}\label{def:genhyp}
We say that a linear isomorphism $T:\Bb\to\Bb$ is \textit{generalized hyperbolic} if there exists a splitting $\Bb = E^s \oplus E^u$ into two closed subspaces such that for some $k>0$ conditions~\eqref{eq:HypContr} hold and
\begin{equation}\label{eq:GenHypIncl}
    T E^s \subset E^s, \quad T^{-1} E^u \subset E^u.
\end{equation}
\end{defin}
For finite-dimensional spaces $\Bb$, inclusions \eqref{eq:GenHypIncl} imply \eqref{eq:HypIncl}, but this is not so in the infinite-dimensional case. Generalized hyperbolic linear isomorphisms satisfy the shadowing property, and a Grobman-Hartman-type theorem. Their periodic orbits may be dense in the whole space and need not be expansive \cite{Puj18, Puj21, NilsonAli, Bernardes2026}.

In this paper, we introduce generalized hyperbolicity for nonlinear diffeomorphisms of Banach spaces. We propose it as a Banach-space counterpart of Axiom~A together with the strong transversality condition.
\begin{defin}\label{def:CLf}
We say that a diffeomorphism $f:\Bb \to \Bb$  satisfies \textit{generalized hyperbolicity} if there exist $C > 0$, $\lambda \in (0, 1)$ and a family of projections $P_x, Q_x: \Bb \to \Bb$ satisfying the following. Denote 
$E^s_x = P_x(\Bb)$, $E^u_x = Q_x(\Bb)$.
\begin{itemize}
    \item[(GH1)] \textbf{Splitting.} For all $x \in \Bb$, $P_x + Q_x = Id$, $\|P_x\|, \|Q_x\| \leq C$. In particular $E^s_x \oplus E^u_x = \Bb$.
    \item[(GH2)] \textbf{Inclusion.} The subspaces $E^s_x$ and $E^u_x$ are invariant under the forward and backward iterations of $Df$, respectively:
    $$
    Df(x)E^s_x \subseteq E^s_{f(x)}, \; D(f^{-1})(x)E^u_x \subseteq E^u_{f^{-1}(x)}, \quad \mbox{ for any $x \in \Bb$}.
    $$
    \item[(GH3)]  \textbf{Exponential estimates.}  For any $x \in \Bb$, $n > 0$
    $$
    |Df^n(x)v^s_x| \leq C \lambda^n |v^s_x|, \quad v^s_x \in E^s_x;
    $$
    $$
    |D(f^{-n})(x)v^u_x| \leq C \lambda^n |v^u_x|, \quad v^u_x \in E^u_x.
    $$
\end{itemize}
\end{defin}
The definition is inspired by generalized hyperbolicity for linear isomorphisms and by the $(C,\lambda)$-structure introduced by S.~Pilyugin in \cite{PilCL}. 
It differs from ordinary hyperbolicity in two essential respects: condition {\rm(GH2)} requires only inclusions, and no continuity of the splitting $E_x^s\oplus E_x^u$ is assumed. 
As in the linear theory \cite{Puj18,Puj21,NilsonAli}, the splitting need not be uniquely determined by the dynamics. 
Fully justifying the term ``hyperbolicity'' would require an adapted metric in which {\rm(GH3)} holds with $C=1$. We expect that such a metric can be constructed as in the finite-dimensional setting \cite{KatokHas,Gourmelon}, but do not pursue this here.


Our results concern two settings. In finite dimensions, we obtain an exact calibration of Definition~\ref{def:CLf} by the classical theory. In Banach spaces, we work in a uniform global $C^1$ category, in which the maps act on the entire Banach space, $Df$ and $D(f^{-1})$ are globally bounded, and $Df$ is uniformly continuous.

\begin{enumerate}
\item \textbf{Calibration in finite dimensions.} For a $C^1$-diffeomorphism of a smooth compact manifold, generalized hyperbolicity is equivalent to Axiom~A together with the strong transversality condition (Theorem~\ref{stat:finite-dimensional-characterization}). 

\item \textbf{Shadowing.} Generalized-hyperbolic diffeomorphisms have the Lipschitz shadowing and Lipschitz periodic shadowing properties (Theorem~\ref{thm:main-results}\,(\ref{item:main-LipSh})--(\ref{item:main-LipPerSh})). The shadowing trajectory is not necessarily unique.

\item \textbf{Density of periodic points.} Periodic points are dense in the chain-recurrent set: $\overline{Per(f)} = CR(f)$ (Theorem~\ref{thm:main-results}\,(\ref{item:main-chain})).

\item \textbf{Robustness.} Generalized hyperbolicity is robust in the uniform $C^1$ distance (Theorem~\ref{thm:main-results}\,(\ref{item:main-robust})).

\item \textbf{Stability.} Under uniform continuity of the splitting we prove semi-structural stability (Definition~\ref{def:semi-SS}), and structural stability if additionally $E^s_x \oplus (Df(x))^{-1}E^u_{f(x)}$ is independent of~$x$; this additional condition is automatic for an invariant hyperbolic splitting (Theorem~\ref{thm:SS}).
\end{enumerate}

Items (2)-(5) are the properties that Axiom~A with strong transversality provides in finite dimensions, and they persist in Banach spaces even when the splitting is discontinuous and both inclusions in {\rm(GH2)} are strict. Two implications that hold in finite dimensions remain open in Banach spaces: whether generalized hyperbolicity implies structural stability, and whether Lipschitz shadowing implies generalized hyperbolicity; see Conjecture~\ref{conj:SS-general} and Section~\ref{sec:discussion}.

Two examples developed in Section~\ref{sec:Examples} isolate the two
distinctive features of Definition~\ref{def:CLf}. The nonlinear shift of
Example~\ref{ex:shift}, a counterpart of the generalized-hyperbolic weighted shifts of \cite{Puj18,Puj21}, has a constant splitting but both inclusions in {\rm(GH2)} are strict; nevertheless it is structurally stable by Theorem~\ref{thm:SS}\,(\ref{item:SS-full}). 
In Example~\ref{ex:product}, an infinite product of a Morse--Smale map of $\Rr$, the splitting is discontinuous and its stable dimension varies along trajectories; for $p=+\infty$, the fixed points realize every finite stable dimension and also infinite-dimensional stable subspaces.
Each example therefore isolates one feature that lies beyond the earlier Banach-space frameworks discussed below; see Remark~\ref{rem:minimal}.

There are three difficulties distinguish the Banach-space setting technically.
First, no continuity of the splitting $E^s_x\oplus E^u_x$ is available: shadowing is obtained by constructing, from a pseudotrajectory, successive corrections whose errors tend to zero.
Second, although the global assumptions provide estimates uniform in the base point, bounded sets in a Banach space need not be compact. The passage from finite to infinite pseudotrajectories therefore cannot rely on selecting a convergent subsequence of shadowing points; instead, it uses the quantitative linear-shadowing result of \cite{NilsonPeris} and an iterative error-reduction argument.
Third, $Df(x)|_{E^u_x}$ need not be invertible, which forces a modification of the classical graph-transform argument through auxiliary spaces $F_x\subset E^u_x$.

The mechanism common to all three is bounded solvability of the linearized inhomogeneous equations.
For a sequence $\mathcal A=\{A_k=Df(x_k)\}$ along an exact trajectory, consider
$$
T_{\mathcal A}\{v_k\}=\{v_{k+1}-A_kv_k\}.
$$
Generalized hyperbolicity provides a bounded linear right inverse for $T_{\mathcal A}$, uniformly over all trajectories (Lemma~\ref{lem:SBS}), and under small perturbations this yields uniformly bounded solutions along pseudotrajectories.
Robustness and stability are obtained by transferring the same construction to sequences of linear operators and to linear cocycles; the cocycle notion, which may also be of independent interest, is the basis of the semi-structural and structural stability arguments (Theorem~\ref{thm:SS}).

In finite-dimensional dynamics, several notions have been introduced that generalize hyperbolicity and are equivalent to Axiom A plus the strong transversality condition, such as locally Anosov diffeomorphisms \cite{Rob71} and piecewise hyperbolic sequences of linear operators \cite{Pli77, Pli80}. 
At the level of sequences of linear operators, generalized hyperbolicity
is equivalent in finite dimensions to exponential dichotomies on
$\Zz^\pm$ together with transversality. In infinite dimensions neither
implication is automatic: generalized hyperbolicity may hold without
half-line dichotomies (Example~\ref{ex:CL-not-ED}); the
converse is Question~\ref{quest:ED}; see Sections~\ref{sec:inh},~\ref{sec:discussion}.

The hyperbolic theory of nonlinear infinite-dimensional systems is still far from fully developed, although the literature is substantial. 
Relevant contributions include studies of dynamics near an equilibrium \cite{Palis1968,FSSW1996,FiedlerTuraev1998,YWPT2022}, shadowing and structural stability near a homoclinic trajectory \cite{SteinleinWalther1989,SteinleinWalther1990,LaniWayda1995,LaniWayda1995-2,LaniWayda1999,Li2003,Zeng1995,Blazquez1986}, Lyapunov exponents in the Hilbert setting \cite{LianYoung2012}, and hyperbolic maps on the infinite-dimensional torus \cite{Glyzin1,Glyzin9}; for general overviews, see \cite{Henry1981,SellYou,Li2004}.
A separate, related literature concerns chaotic linear operators on Banach and Fréchet spaces \cite{GodefroyShapiro1991,HouTianZhu2012,MartínezGimenezOprochaPeris2013,BBMP2013,BBPW2018,BernardesMessaoudi2021,BMGR2025,Nilson25,BBP2026,BV2025,Puj18,NilsonAli,NilsonPeris}; see also the monographs \cite{BayartMatheron2009,GrosseErdmannPerisManguillot2011}.

The closest Banach-space notions are those of \cite{SteinleinWalther1990,LaniWayda1995}, introduced near homoclinic orbits of delay equations. Definition~\ref{def:CLf} relaxes these frameworks in three independent directions. 
First, neither projection is required to depend continuously on $x$, allowing dimension variability and Morse-Smale dynamics such as Example~\ref{ex:product}; see also the related recent Hilbert-space results of \cite{ACK2025}. 
Second, {\rm(GH2)} allows both inclusions to be strict, as realized by Example~\ref{ex:shift}; in particular, because the unstable inclusion is strict, unstable vectors need not grow and the map need not be expansive.
Third, $E_x^u$ may be infinite-dimensional, unlike theories based on finite-dimensional unstable or inertial manifolds \cite{Henry1981,SellYou,DKKP2012}. 
Thus neither principal example belongs to the frameworks of \cite{SteinleinWalther1990,LaniWayda1995,Henry1981,SellYou}. 

Extending the definition to the non-invertible maps also treated in \cite{SteinleinWalther1990,LaniWayda1995} is left for future work. A recent result of Arrieta, Carvalho, and Takaessu \cite{ACK2025} gives Lipschitz shadowing on the global attractor and H\"older shadowing in a neighborhood of the attractor for Morse-Smale semigroups on a Hilbert space. Our setting is different, since we consider invertible dynamics under uniform global assumptions; within this setting, our shadowing conclusion is global on the entire Banach space rather than restricted to an attractor or its neighborhood.

One motivation comes from renormalization operators in dynamical systems, including those associated with the logistic map \cite{RL1,RL2,RL3,RL4}, the Hénon map \cite{RH1,RH2,RH3,RH4}, and the complex quadratic family \cite{RUQ1,RL2,RUQ3}.
Generalized hyperbolicity is likely to be most relevant when the renormalization dynamics has a nontrivial invariant set, as may occur in spontaneous stochasticity \cite{MailRaib1,MailRaib2} and in the limiting dynamics of gravitational fingers for the incompressible porous media equation \cite{otto-menon-2005,Boffetta2,PetrovaTikhomirovEfendiev2025}.
The cocycle version may also be useful for stability questions concerning endomorphisms \cite{End-Berger-Koscard,End-Ikeda,End-Uruguay}.

The paper is organized as follows. 
Section~\ref{sec:thms} states the main results, and Section~\ref{sec:Examples} presents the examples. 
The proof strategy is summarized in Section~\ref{sec:scheme}; the required linear theory is developed in Section~\ref{sec:inh}, and the shadowing, robustness, and stability results are proved in Sections~\ref{sec:proofsSh}--\ref{sec:SS}. 
Section~\ref{sec:discussion} contains open questions and further directions. Appendix~A proves the quantitative infinite-shadowing result used in the paper, while Appendix~B
establishes the finite-dimensional characterization.

\section{Main results}\label{sec:thms}

\subsection{Calibration in finite dimensions}

We begin with the calibration of Definition~\ref{def:CLf} by the classical finite-dimensional theory.

\begin{thm}[Finite-dimensional characterization]
\label{stat:finite-dimensional-characterization}
For a $C^1$-diffeomorphism of a smooth compact manifold, the tangent-bundle analogue of generalized hyperbolicity is equivalent to Axiom~A and the strong transversality condition.
\end{thm}

The precise definition on manifolds and the proof based on Robinson \cite{Robinson1976} and Pilyugin \cite{PilCL} results are given in Appendix~B.

\subsection{The uniform global $C^1$ category}

In the setting of Theorem~\ref{stat:finite-dimensional-characterization}, compactness of the phase space does substantial work: it makes the bounds on the derivative and the modulus of continuity of $Df$ automatically uniform in the base point.
On a Banach space none of this is automatic, and the pseudotrajectories considered below may visit arbitrary regions of a noncompact phase space, so the estimates in the shadowing and robustness arguments must be independent of the base point. We therefore impose the uniformity that compactness would otherwise supply.

We now specify the uniform global $C^1$ category used throughout the paper. Let $f:\Bb\to\Bb$ be a $C^1$-diffeomorphism with Fréchet derivative $Df:\Bb\to\mathcal L(\Bb,\Bb)$. We require uniform forward and backward differential bounds and a base-point-independent modulus of continuity for $Df$. More precisely, assume that there exist $R>0$ and a nondecreasing function $r:\Rr^+\to\Rr^+$ such that
\begin{equation}\label{eq:C1-1}
    |f(x+v) - f(x) - Df(x)v| \leq |v|r(|v|), \quad x, v \in \Bb,
\end{equation}
\begin{equation}\label{eq:C1-2}
      \|Df(x)\| \leq R, \quad \|D(f^{-1})(x)\| \leq R, \quad x \in \Bb,
\end{equation}
\begin{equation}\label{eq:C1-r1}
      \|Df(x+v) - Df(x)\| \leq r(|v|),     \quad x, v \in \Bb, v \ne 0,
\end{equation}
\begin{equation}\label{eq:C1-r2}
       \quad \lim_{z \to 0} r(z) = 0.
\end{equation}
Conditions \eqref{eq:C1-1}--\eqref{eq:C1-r2} define the uniform global $C^1$ framework used in this paper. The bounds on $Df$ and $D(f^{-1})$ provide uniform forward and backward control, while \eqref{eq:C1-r1}--\eqref{eq:C1-r2} provide a uniform estimate for the nonlinear remainder. We emphasize that compactness is replaced only in this respect: the passage from finite to infinite pseudotrajectories is still obtained without extracting convergent subsequences.

Define 
$$
s_f(x, v) := f(x+v) - f(x) - Df(x)v, \quad  x, v \in \Bb.
$$
Conditions \eqref{eq:C1-r1}, \eqref{eq:C1-r2} imply that for any $\varepsilon > 0$ there exists $\delta > 0$ such that 
\begin{equation}\label{eq:s-Lip}
    |s_f(x, v_1) - s_f(x, v_2)| \leq \varepsilon |v_1 - v_2|, \quad x \in \mathbb{B}, |v_1|, |v_2| < \delta.
\end{equation}

We denote by $\DUC(\Bb)$ the set of $C^1$-diffeomorphisms $f:\Bb\to\Bb$ satisfying \eqref{eq:C1-1}--\eqref{eq:C1-r2}. 
We equip this set with the extended uniform $C^1$ distance
$$
\|f-g\|_{C^1}
:=
\sup_{x\in\Bb}|f(x)-g(x)|
+
\sup_{x\in\Bb}\|Df(x)-Dg(x)\|,
\qquad f,g\in\DUC(\Bb).
$$

The value of this distance may be infinite. 
The use of the uniform, rather than compact-open, distance is matched to the global nature of the results: it controls perturbations on the entire Banach space. 
All statements concerning $C^1$-small perturbations, openness, and robustness below refer to this uniform distance and are understood relative to $\DUC(\Bb)$.

This framework is nontrivial and perturbatively stable: all examples in Section~\ref{sec:Examples} satisfy the assumptions, and by Theorem~\ref{thm:main-results}\,(\ref{item:main-robust}) and Proposition~\ref{stat:difconj}, they generate families open in the uniform $C^1$ distance within $\DUC(\Bb)$.

We first record elementary properties of generalized hyperbolicity within this category.

\begin{statement}[Elementary properties]\label{stat:difconj}
Let $f, h \in \DUC(\Bb)$ and assume that $f$ satisfies generalized hyperbolicity with constants $C>0$, $\lambda \in (0,1)$ and splitting $\Bb = E^s_x \oplus E^u_x$. 
Then:
\begin{itemize}
\item[(P1)] $\DUC(\Bb)$ is a group under composition, and $g = h \circ f \circ h^{-1}$
satisfies generalized hyperbolicity with constants $R_h^2 C$ and $\lambda$, where $R_h$ bounds $\|Dh\|$ and $\|Dh^{-1}\|$;
\item[(P2)] $f^{-1}$ satisfies generalized hyperbolicity with the same constants and with the splitting obtained by interchanging $E^s_x$ and $E^u_x$;
\item[(P3)] for every $n \geq 1$, $f^n$ satisfies generalized hyperbolicity with constants $C$, $\lambda^n$ and the same splitting;
\item[(P4)] let $p \in [1,+\infty]$ and let $f_i \in \DUC(\Bb_i)$, $i \in \mathcal I$,
satisfy $f_i(0) = 0$, conditions \eqref{eq:C1-1}--\eqref{eq:C1-r2} with $R$ and $r$ independent of $i$, and generalized hyperbolicity with $C, \lambda$ independent of $i$.
Then $f(\{x_i\}) = \{f_i(x_i)\}$ belongs to $\DUC(\Bb)$, where
$\Bb = \bigl(\bigoplus_{i \in \mathcal I} \Bb_i\bigr)_{\ell^p}$, and satisfies generalized hyperbolicity with constants $C, \lambda$.
\end{itemize}
\end{statement}
The proof is straightforward and is given in Appendix~C.

\begin{remark}
The semiflows of parabolic and delay equations that partly motivate this work belong to a different analytical category: they are typically non-invertible and generated by unbounded, densely defined operators, so \eqref{eq:C1-2}--\eqref{eq:C1-r1} do not apply as stated. Extending generalized hyperbolicity to that setting requires additional ideas and is left for future work; we return to this question in Section~\ref{sec:discussion}.
\end{remark}

\subsection{Shadowing, density of periodic points, and robustness}

One of the important results in the hyperbolic theory of dynamical systems is the shadowing lemma \cite{Anosov, Bowen}, which states that in a neighborhood of a hyperbolic set a dynamical system satisfies the shadowing property. Recall the notion of shadowing property.
For an interval 
\begin{equation}\label{eq:I}
I=(a, b) \subset \Zz \quad \mbox{with $a \in \mathbb{Z} \cup\{-\infty\}, b \in \mathbb{Z} \cup\{+\infty\}$}    
\end{equation}
denote $\tilde{I} = \{k \in \Zz: k, k+1 \in I\}$.

\begin{defin}
For $d>0$, we say that a sequence $\left\{y_{k} \in \Bb\right\}_{k \in I}$ is a $d$-pseudotrajectory if
\begin{equation}\label{eq:pst1}
\operatorname{dist}\left(y_{k+1}, f\left(y_{k}\right)\right) \leq d, \quad k \in \tilde{I}.
\end{equation}
If the interval $I$ is not specified, we assume that $I = \mathbb{Z}$.
\end{defin}
By \eqref{eq:C1-2}, for any $d$-pseudotrajectory $\{y_k\}_{k \in I}$, the following inequality holds 
\begin{equation}\label{eq:pst2}
|y_{k} - f^{-1}(y_{k+1})| \leq Rd, \quad k \in \tilde{I}.
\end{equation}

Initially, pseudotrajectories were introduced in the theory of chain-recurrent sets and structural stability. 
We use the following notions.
\begin{defin}
We say that a point $x \in \Bb$ is chain-recurrent for a diffeomorphism $f$ if, for any $d>0$, there exists a finite $d$-pseudotrajectory $\{y_k\}_{k \in [0, N]}$ with $y_0 = y_N = x$. Denote the set of all chain-recurrent points by $CR(f)$. Note that $CR(f)$ is closed.
\end{defin}
For $f \in \DUC(\Bb)$, denote by $Per(f)$ the set of periodic points. Note that $Per(f) \subset CR(f)$.

\begin{defin}
We say that $f$ has the \textit{finite Lipschitz shadowing property} (FinLipSh) if there exist $M, d_{0} > 0$ such that for any $d < d_{0}$, finite interval $I = (a, b)$, and $d$-pseudotrajectory~$\left\{y_{k}\right\}_{k \in I}$ there exists an exact trajectory~$\{x_{k}\}_{k \in I}$ satisfying $|x_{k} - y_{k}| \leq M d$, $k \in I$.
\end{defin}

\begin{defin}
We say that $f$ has the \textit{Lipschitz shadowing property} (LipSh) if there exist $M, d_{0} > 0$ such that for any $d < d_{0}$ and $d$-pseudotrajectory $\left\{y_{k}\right\}_{k \in \mathbb{Z}}$ there exists an exact trajectory $\left\{x_{k}\right\}_{k \in \mathbb{Z}}$ satisfying $|x_{k} - y_{k}| \leq M d$, $k \in \mathbb{Z}$.
\end{defin}

\begin{defin}
We say that $f$ has the \textit{Lipschitz periodic shadowing property} (LipPerSh) if there exist $M, d_{0} > 0$ such that for any $d < d_{0}$ and periodic $d$-pseudotrajectory $\left\{y_{k}\right\}_{k \in \mathbb{Z}}$ with $y_{k+N} = y_k$ (for some $N > 0$) there exists a periodic exact trajectory $\left\{x_{k}\right\}_{k \in \mathbb{Z}}$ satisfying
\begin{equation}\notag
x_{k+N} = x_k, \quad |x_{k} - y_{k}| \leq M d, \quad k \in \mathbb{Z}. 
\end{equation}
\end{defin}

For finite-dimensional spaces, the finite Lipschitz shadowing property is equivalent to the Lipschitz shadowing property. The proof is relatively straightforward \cite{PilBook,Palmer2000}, but uses compactness of closed balls, which is unavailable in infinite-dimensional spaces. For a more comprehensive overview of Lipschitz shadowing, see \cite{PilyuginSakai2017}. In the context of infinite-dimensional Banach spaces, a corresponding finite-to-infinite shadowing result is known for linear systems \cite{NilsonPeris}; for Fréchet spaces, this implication may fail \cite{Nilson25}.

The principal unconditional conclusions of the paper are collected in
the following theorem.

\begin{thm}[Main theorem]\label{thm:main-results}
Let $f\in\DUC(\Bb)$ satisfy generalized hyperbolicity. Then:
\begin{enumerate}[
    label=\textup{(M\arabic*)},
    ref=\textup{M\arabic*}
]
    \item \label{item:main-LipSh}
    $f$ has the Lipschitz shadowing property;
    \item \label{item:main-LipPerSh}
    $f$ has the Lipschitz periodic shadowing property;
    \item \label{item:main-chain}
    periodic points are dense in the chain-recurrent set: $\overline{Per(f)}=CR(f)$;
    \item \label{item:main-robust}
    generalized hyperbolicity is robust in the uniform $C^1$
    distance: if $f$ satisfies generalized hyperbolicity with constants
    $C>0$ and $\lambda\in(0,1)$, then, for every
    $\lambda_1\in(\lambda,1)$, there exist $\delta>0$ and $C_1>0$ such
    that every $g\in\DUC(\Bb)$ satisfying
    $
    \|g-f\|_{C^1}<\delta
    $
    is generalized hyperbolic with constants $C_1$ and $\lambda_1$.
\end{enumerate}
\end{thm}

Theorem~\ref{thm:main-results}\,(\ref{item:main-LipSh}) may be viewed as a generalization of the
shadowing lemma for $C^1$-diffeomorphisms of Banach spaces.
We do not claim uniqueness of the shadowing trajectory; indeed, it fails
for Examples~\ref{ex:shift}, \ref{ex:product}, \ref{ex:coh}.
Theorem~\ref{thm:main-results}\,(\ref{item:main-LipPerSh}) is in the spirit of Shaobo Gan's
shadowing lemma \cite{Gan2002}, although our proof is based on different
ideas.
The proof of Theorem~\ref{thm:main-results}\,(\ref{item:main-robust}) uses the Axiom of Choice; see Remark~\ref{rem:AoC}.

\subsection{Structural stability}

The situation with respect to structural stability is more involved. We conjecture that generalized hyperbolicity implies structural stability in full (Conjecture~\ref{conj:SS-general} below); in this paper we prove only a weaker statement (Theorem \ref{thm:SS}); other related open questions are discussed in Section~\ref{sec:discussion}.

\begin{defin}
We say that $f \in \DUC(\Bb)$ is \textit{structurally stable} if for every $\varepsilon > 0$ there exists $d_0 = d_0(\varepsilon) > 0$ such that for any diffeomorphism $g \in \DUC(\Bb)$ with $\|f-g\|_{C^1} < d_0$ there exists a continuous map $h_1: \Bb \to \Bb$ satisfying $\sup_{x \in \Bb}|h_1(x)| < \varepsilon$ such that $(Id+h_1)$ is a homeomorphism and $g \circ (Id+h_1) = (Id+ h_1) \circ f$. If, moreover, the map $h_1$ can be chosen so that $Id+h_1$ and $(Id+h_1)^{-1}$ are uniformly continuous, we say that $f$ is \textit{uniformly structurally stable}.
\end{defin}

\begin{conj}\label{conj:SS-general}
Every diffeomorphism $f \in \DUC(\Bb)$ satisfying generalized hyperbolicity is structurally stable.
\end{conj}

To obtain partial results toward Conjecture~\ref{conj:SS-general}, we impose uniform continuity of the generalized-hyperbolic splitting.
This leads to uniform semi-structural stability and, under an additional compatibility condition, to uniform structural stability.

\begin{defin}
We say that $f \in \DUC(\Bb)$ satisfies \textit{uniformly continuous generalized hyperbolicity} if it satisfies generalized hyperbolicity with a family of projections $\{P_x, Q_x\}$ for which the maps $x \to P_x$, $x \to Q_x$ are in addition uniformly continuous, i.e.
\begin{equation}\label{eq:cont-PQ}
    \mbox{$\forall$ $\varepsilon > 0$ $\exists$ $\delta > 0$ such that $\|P_x - P_y\|, \|Q_x - Q_y\| < \varepsilon$ for $|x-y| < \delta$}. 
\end{equation}
\end{defin}

\begin{defin}\label{def:semi-SS}
We say that $f \in \DUC(\Bb)$ is \textit{uniformly semi-structurally stable} if for every $\varepsilon > 0$ there exists $d_0 = d_0(\varepsilon) > 0$ such that for any diffeomorphism $g \in \DUC(\Bb)$ with $\|f-g\|_{C^1} < d_0$ there exist uniformly continuous maps $h_1, h_2: \Bb \to \Bb$ satisfying
$
\sup_{x \in \Bb}|h_1(x)|, \; \sup_{x \in \Bb}|h_2(x)| < \varepsilon,
$
such that 
$$
g \circ (Id + h_1) = (Id + h_1) \circ f, \quad f \circ (Id + h_2) = (Id + h_2) \circ g.
$$
\end{defin}

\begin{thm}\label{thm:SS}
If $f \in \DUC(\Bb)$ satisfies uniformly continuous generalized hyperbolicity, then there exist $M>0$, $d_0 > 0$ such that for any $d \in (0, d_0)$ and diffeomorphism $g \in \DUC(\Bb)$ with $\|f-g\|_{C^1} < d$ the following holds.
    \begin{enumerate}[
    label=\textup{(S\arabic*)},
    ref=\textup{S\arabic*}
]
        \item \label{item:SS-semi}
            There exist uniformly continuous maps $h_1, h_2:\Bb \to \Bb$ satisfying
        \begin{equation}\label{eq:conj-h1}
        g \circ (Id+h_1) = (Id + h_1) \circ f,
        \end{equation}
        \begin{equation}\label{eq:conj-h2}
        f \circ (Id+h_2) = (Id + h_2) \circ g,
        \end{equation}
        \begin{equation}\label{eq:h1h2leqMdelta}
        |h_1(x)|, |h_2(x)| < Md, \quad x \in \Bb,
        \end{equation}
        hence $f$ is uniformly semi-structurally stable.
        \item \label{item:SS-full}
        If additionally the spaces $E^s_x \oplus (Df(x))^{-1}E^u_{f(x)}$ do not depend on $x$, then $h_1$, $h_2$ can be chosen such that 
        \begin{equation}\label{eq:Idh-homeo}
        (Id+h_1) \circ (Id+h_2) = Id, \quad (Id+h_2) \circ (Id+h_1) = Id.
        \end{equation}
In that case, the maps $(Id+h_1)$, $(Id+h_2)$ are homeomorphisms, and hence $f$ is uniformly structurally stable.
    \end{enumerate}
\end{thm}

\begin{remark}
The additional condition in Theorem~\ref{thm:SS}\,(\ref{item:SS-full}) is automatic for an invariant hyperbolic splitting. Indeed, if
$
Df(x)E_x^u=E_{f(x)}^u,
$
then
$
(Df(x))^{-1}E_{f(x)}^u=E_x^u,
$
and consequently
$
E_x^s\oplus(Df(x))^{-1}E_{f(x)}^u
=
E_x^s\oplus E_x^u
=
\Bb.
$
\end{remark}

To prove this theorem, we introduce a notion similar to generalized hyperbolicity for linear cocycles and establish a robustness property for it (Section \ref{sec:SS}). 

\begin{remark}\label{rem:LaniWaydaResults}
    In \cite{LaniWayda1995} strong results on shadowing and stability (persistence in their terminology) were obtained. At the same time, their proof strongly relies on an expansivity argument, which is not available in our setting. For other cases in which uniqueness or expansivity simplifies the analysis of the interplay between shadowing, structural stability, and hyperbolicity or exponential dichotomy, see, for example, \cite{Henry94}.
\end{remark}

\section{Examples}\label{sec:Examples}

The examples below serve three complementary purposes. 
The nonlinear shift shows that the inclusions in {\rm(GH2)} may be strict and that shadowing need not be unique. 
The coordinatewise Morse--Smale example shows that the splitting may be discontinuous and that the stable index may vary without bound or become infinite. 
Finally, the pushforward constructions show how generalized hyperbolicity can be transferred from nonlinear dynamics to natural linear operators on spaces of functions and vector fields.

\subsection{Strict invariant inclusions: a nonlinear shift}
In this subsection, we construct a nonlinear shift satisfying generalized hyperbolicity, generalizing hyperbolic weighted translation operators \cite{Puj18,Puj21,LatushkinRandolph1995}.
\begin{example}\label{ex:shift}
    Let $\lambda \in (0, 1)$, $R > 1/\lambda$, $M > 1$  and diffeomorphisms $a_k: \Rr \to \Rr$ satisfy 
\begin{equation}\label{eq:shift-ak}
    a_k(0) = 0; \quad 
    a'_k(x) \in \left(\frac{1}{\lambda}, R\right), \; k < 0; \quad
    a'_k(x) \in \left(\frac{1}{R}, \lambda\right), \; k \geq 0,    
\end{equation}
\begin{equation}\label{eq:shift-akc2}
a_k \in C^2, \quad |a''_k|, |(a^{-1}_k)''| \leq M.
\end{equation}
Let $\Bb = \ell^p(\Zz)$ with $p \in [1, +\infty]$ and consider the map $f: \Bb \to \Bb$ defined as
$$
f(\{x_k\}) = \{y_k\}, \quad \mbox{where $y_{k+1} = a_k(x_k)$}.
$$
Then $f$ is a diffeomorphism satisfying generalized hyperbolicity.
\end{example}
\begin{proof}
By the definition of $f$, the inequalities $|y_{k+1}| \leq R |x_k|$ hold, hence $\|f(\{x_k\})\| \leq R \|\{x_k\}\|$, and $f$ is well-defined. Moreover, $f^{-1}$ is defined by $f^{-1}(\{y_k\}) = \{x_k\}$, where $x_k = a_k^{-1}(y_{k+1})$, with $\|f^{-1}(\{y_k\})\| \leq R\|\{y_k\}\|$; hence $\{x_k\}$ is well-defined.

For $\{x_k\} \in \Bb$ define a linear operator $A(\{x_k\}):\Bb \to \Bb$ by
$$
A(\{x_k\})\{v_k\} = \{w_k\}, \qquad \mbox{where $w_{k+1} = a'_k(x_k)v_k$}.
$$
It is invertible, with $A^{-1}(\{x_k\})\{w_k\} = \{v_k\}$, where $v_k = w_{k+1}/a'_k(x_k)$, and by \eqref{eq:shift-ak} we have $\|A(\{x_k\})\|, \|A^{-1}(\{x_k\})\| \leq R$. 
Let us show that $Df(\{x_k\}) = A(\{x_k\})$, where $Df$ is a Fréchet derivative. Indeed,
    \begin{multline*}
    f(\{x_k\} + \{v_k\}) - f(\{x_k\}) - A(\{x_k\})\{v_k\} = \{z_k\}, \\ \mbox{where $z_{k+1} = a_k(x_k + v_k) - a_k(x_k) - a'_k(x_k)v_k$}.    
    \end{multline*}
According to \eqref{eq:shift-akc2}, we have $|z_k| \leq M|v_k|^2$ and hence inequality \eqref{eq:C1-1} holds with $r(|v|) = M|v|$. Similarly, equation \eqref{eq:C1-r1} holds and $r$ satisfies \eqref{eq:C1-r2}.

Let us show that $f$ satisfies generalized hyperbolicity. Denote 
    $$
    E^s = \{ v_k = 0, k < 0 \}, \quad E^u = \{ v_k = 0, k \geq 0 \}.
    $$
Note that $E^s \oplus E^u = \Bb$. Let $P_x, Q_x$ be the projections onto $E^s$ and $E^u$ respectively (independent of $x$). Then
    $
    E^s_x = P_x(\Bb) = E^s$, $E^u_x = Q_x(\Bb) = E^u$, and $\|P_x\| = \|Q_x\| = 1
    $,
so condition (GH1) holds with $C \geq 1$. By construction,
    $A(x)E^s \subset E^s$ and $A^{-1}(x)E^u \subset E^u$,
hence condition (GH2) holds. By \eqref{eq:shift-ak} we have $\|A(x)|_{E^s}\|, \|A^{-1}(x)|_{E^u}\| \leq \lambda$, and hence condition (GH3) holds with $C = 1$ and the same $\lambda$ as above.
\end{proof}

\noindent\textbf{Consequences of the construction.}
For every $p\in[1,+\infty]$, all conclusions of
Theorem~\ref{thm:main-results} hold. Since the splitting is constant,
it satisfies the additional hypothesis of
Theorem~\ref{thm:SS}\,(\ref{item:SS-full}); hence $f$ is uniformly
structurally stable.

Both inclusions in (GH2) are strict. 
For instance $A^{-1}(x)E^u \subsetneq E^u$, the missing direction being $e_{-1}$.
Indeed, $A(x)e_{-1} = a_{-1}'(x_{-1})\,e_0 \in E^s$, so the unstable direction $e_{-1}$ is carried by $Df$ into the stable indices and $|A^n(x)e_{-1}| \to 0$ instead of growing. Similarly, $A(x)E^s \subsetneq E^s$, the missing direction being $e_0$.
In particular, $f$ is not expansive, and the shadowing trajectory in Theorem~\ref{thm:main-results}\,(\ref{item:main-LipSh}) need not be unique. 
Thus strict invariant inclusions, nonexpansivity, and nonunique shadowing are compatible with uniform structural stability.

The generalized-hyperbolic splitting is itself not uniquely determined
by the dynamics. For every $N\in\Zz$, one may take
$C=R^{|N|}$ and
\[
E_x^s=\{v_k=0,\ k<N\},
\qquad
E_x^u=\{v_k=0,\ k\geq N\},
\]
with the corresponding coordinate projections.

\subsection{Discontinuous splittings and varying index:
products of Morse-Smale maps}

Fix $0<\lambda_1<\lambda_2<1$.
Consider an increasing $C^\infty$-diffeomorphism
$a:\Rr\to\Rr$ whose only fixed points are $-1,0,1$, satisfying
\[
a(-1)=-1,\quad a'(-1)=\lambda_1,\qquad
a(0)=0,\quad a'(0)=1/\lambda_1,\qquad
a(1)=1,\quad a'(1)=\lambda_1.
\]
Assume that, for some $\varepsilon\in(0,1/2)$ and $M>0$,
\begin{equation}\label{eq:aLip}
\lambda_1\leq a'(x)\leq\frac1{\lambda_1},
\qquad |a''(x)|\leq M,
\end{equation}
\[
a'(x)\leq\lambda_2
\quad\text{for }x\in(-\infty,-1+\varepsilon)\cup(1-\varepsilon,+\infty),
\qquad
a'(x)\geq\frac1{\lambda_2}
\quad\text{for }x\in(-\varepsilon,\varepsilon).
\]

Since $a(0)=0$, condition \eqref{eq:aLip} implies
$
|a(x)|\leq\frac1{\lambda_1}|x|$ and
$|a^{-1}(x)|\leq\frac1{\lambda_1}|x|.
$

The exact form of $a$ is not important. Such a map can be obtained by smoothly interpolating between a map of slope $\lambda_1$ in the two attracting regions and a map of slope $1/\lambda_1$ near the repelling fixed point, while preserving the inequalities above.

Thus $a$ is a Morse-Smale diffeomorphism of the line. The points $\pm 1$ are hyperbolic attracting fixed points; the point $x = 0$ is a hyperbolic repelling fixed point. For large enough~$n_0(\varepsilon)$, each trajectory $\{x_n\}$ spends at most $n_0$ steps outside $(-\infty, -1+\varepsilon) \cup (-\varepsilon, \varepsilon) \cup (1-\varepsilon, +\infty)$. Then $a$ satisfies generalized hyperbolicity with constants $C = (\lambda_1\lambda_2)^{-n_0}$, $\lambda = \lambda_2$ and splitting
$$
G^s_x = 
\begin{cases}
    \Rr, \quad |x| > 1/2,\\
    0, \quad |x| \leq 1/2,
\end{cases}
\quad
G^u_x = 
\begin{cases}
    0, \quad |x| > 1/2,\\
    \Rr, \quad |x| \leq 1/2.
\end{cases}
$$

As in Example \ref{ex:shift}, consider $\Bb = \ell^p(\Zz)$ with $p \in [1, +\infty]$.
\begin{example}\label{ex:product}
    Consider $f:\Bb \to \Bb$ defined by
    $
    f(\{x_k\}) = \{a(x_k)\}.
    $
    Then $f \in \DUC(\Bb)$ and satisfies generalized hyperbolicity. 
\end{example}
\begin{proof}
By \eqref{eq:aLip}, the map $a$ satisfies \eqref{eq:C1-2} with $R = 1/\lambda_1$ and \eqref{eq:C1-r1} with $r(t) = Mt$, and this $r$ satisfies \eqref{eq:C1-r2}; hence $a \in \DUC(\Rr)$. As explained above, $a$ satisfies generalized hyperbolicity with constants $C = (\lambda_1\lambda_2)^{-n_0}$, $\lambda = \lambda_2$ and the splitting $\Rr = G^s_x \oplus G^u_x$. Since $a(0) = 0$ and
$$
\Bb = \ell^p(\Zz) = \Bigl(\bigoplus_{k \in \Zz} \Rr\Bigr)_{\ell^p},
$$
Proposition~\ref{stat:difconj}\,{\rm(P4)}, applied with $\Bb_k = \Rr$ and $f_k = a$ for every $k \in \Zz$, shows that $f \in \DUC(\Bb)$, that $Df(\{x_k\})\{v_k\} = \{a'(x_k)v_k\}$, and that $f$ satisfies generalized hyperbolicity with the same constants $C$ and $\lambda$, with respect to the splitting
$$
E^s_x = \bigoplus_{k \in \Zz} G_{x_k}^s = \{v_k = 0, \mbox{if $|x_k| \leq 1/2$}\},
\qquad
E^u_x = \bigoplus_{k \in \Zz} G_{x_k}^u = \{v_k = 0, \mbox{if $|x_k| > 1/2$}\}
$$
and the corresponding projections $P_x$, $Q_x$.
\end{proof}

The inclusion in {\rm(GH2)} has a simple dynamical interpretation in
this example. Define
$
I^s(x):=\{k\in\Zz:\ |x_k|>1/2\}.
$
Then
$
I^s(x)\subseteq I^s(f(x)).
$
Thus the stable subspace can acquire new coordinate directions along a
forward trajectory. A direction is added precisely when the
corresponding coordinate leaves the central repelling region and enters
one of the attracting regions. In this sense, the stable index is
nondecreasing along forward trajectories.

The discontinuity of the splitting is visible already in one coordinate.
Let
$
x=\frac12e_0,
$
and $x^{(m)}=\left(\frac12+\frac1m\right)e_0$. Then $x^{(m)}\to x$ in $\Bb$, whereas
$
P_xe_0=0,$
$P_{x^{(m)}}e_0=e_0$.
Consequently,
$
\|P_{x^{(m)}}-P_x\|=1.
$
In particular, the splitting is discontinuous in the operator-norm
topology on the projections.

The diffeomorphism $f$ satisfies generalized hyperbolicity for any $p \in [1, \infty]$;
and one can choose a trajectory for which the inclusion
$
Df(x_n)E^s_{x_n}\subset E^s_{x_{n+1}}
$
is strict for infinitely many values of $n$. 

The structure of the set of fixed points, however, depends on the value of $p$. The point $\{0\} \in \Bb$ is an unstable fixed point with $E^u_{\{0\}} = \Bb$.

\textbf{Case $p \in [1, \infty)$.} A point $q = \{q_k\}$ is a fixed point if $q_k \ne 0$ only for finitely many indices $k \in \Zz$ and for those indices $q_k = \pm 1$. For each of those points, the stable subspace $E^s_q$ is finite-dimensional with $\dim E^s_q = \#\{k: q_k \ne 0\}$. Note that the set of fixed points is countable,  and each of them has a finite-dimensional stable direction; however, the dimension of the stable subspace can be arbitrarily large. 

\textbf{Case $p = \infty$.} A point $q = \{q_k\} \in \Bb$ is a fixed point if $q_k \in \{-1, 0, 1\}$ for all $k \in \Zz$. The stable subspace $E^s_q$ can be finite- or infinite-dimensional, with
$\dim E^s_q=\#\{k:q_k\ne0\}$, when this set is finite, and is
infinite-dimensional otherwise. The number of fixed points $q$ in this case is uncountable. 

For every $p\in[1,+\infty]$, all conclusions of Theorem~\ref{thm:main-results} hold for $f$.
At the same time, spaces $E^s_x$, $E^u_x$ do not depend continuously on $x$, so the assumptions of Theorem~\ref{thm:SS} are not satisfied. 

\begin{remark}\label{rem:minimal}
Examples~\ref{ex:shift} and~\ref{ex:product} isolate, respectively, strict invariant inclusions and a discontinuous, dimension-varying splitting.
The examples are not covered by the Banach-space frameworks of
\cite{SteinleinWalther1990,LaniWayda1995,Henry1981,SellYou}.
That the examples are rigid is therefore the point rather than a limitation: they are minimal witnesses.
Moreover, by Theorem~\ref{thm:main-results}\,(\ref{item:main-robust}) and Proposition~\ref{stat:difconj}, each generates a conjugacy-closed family that is open in the uniform $C^1$ distance within $\DUC(\Bb)$, so the phenomena they exhibit are stable, not exceptional.
\end{remark}

\subsection{Pushforward operators on function spaces}

Unlike the preceding separation examples, this subsection transfers generalized hyperbolicity from nonlinear maps to operators on function spaces and identifies the regularity obstruction for continuous and measurable vector fields. Shifted hyperbolic composition operators associated with north-south and Morse-Smale dynamics on $L^p$-spaces were previously constructed in \cite[Section~3]{Puj21}. The weighted construction below allows a nonconstant weight and also treats the space of continuous functions.

Let $g:\mathbb{S}^1 \to \mathbb{S}^1$ be  a north–south diffeomorphism  of the circle. Denote by $s, u \in \mathbb{S}^1$ its hyperbolic fixed points, where $s$ attracts and $u$ repels.
Let $a: \mathbb{S}^1 \to \Rr^+$ be a continuous function such that 
\begin{equation}\label{eq:3-3-1}
    a(s) < 1, \quad a(u) > 1.
\end{equation}
Since $\mathbb{S}^1$ is compact, there exists $R> 0$ such that $a(x) \in [1/R, R]$ for $x \in \mathbb{S}^1$.
\begin{example}[Weighted composition operator]\label{ex:coh}
Let $\Bb = L^p(\mathbb{S}^1, \Rr)$ with $p \in [1, +\infty]$, or $\Bb = C^0(\mathbb{S}^1, \Rr)$. Define a linear isomorphism $T:\Bb \to \Bb$, as 
\begin{equation}\label{eq:3-3-T}
T(v)(x) = w(x), \quad \mbox{with $w(g(x)) = a(x)v(x)$}.
\end{equation}
Then $T$ is generalized hyperbolic.   
\end{example}
\begin{proof}
    By \eqref{eq:3-3-1}, there exist $\lambda \in (0, 1)$ and interval neighborhoods $U_s$ of $s$ and $U_u$ of $u$ such that
\begin{equation}\label{eq:3-3-s}
    g(U_s) \subset U_s, \quad a(x) < \lambda \quad x \in U_s,
\end{equation}    
\begin{equation}\label{eq:3-3-u}
    g^{-1}(U_u) \subset U_u, \quad a(x) > 1/\lambda \quad x \in U_u.
\end{equation}   

\textbf{Case 1.} $\Bb = L^p(\mathbb{S}^1, \Rr)$ with $p \in [1, +\infty]$.
Set $b_p(x):=a(x)|g'(x)|^{1/p}$, and set $b_\infty:=a$. Since $s$ and $u$ are hyperbolic attracting and repelling fixed points, respectively,
$b_p(s)<1$, $b_p(u)>1$.
For $p<+\infty$, a change of variables gives
$$
|Tv|_p^p=\int_{\mathbb S^1} b_p(x)^p|v(x)|^p\,dx,
$$
so $b_p$ is the effective weight for the $L^p$ estimates.
Decreasing the neighborhoods $U_s,U_u$, while preserving the inclusions in \eqref{eq:3-3-s}, \eqref{eq:3-3-u}, we can assume that there exists $\lambda_p\in(0,1)$ such that
$$
b_p(x)<\lambda_p\quad(x\in U_s),
\qquad
b_p(x)>\lambda_p^{-1}\quad(x\in U_u).
$$
Let $\xi_1,\xi_2$ be the boundary points of $U_u$, set $V=\mathbb S^1\setminus U_u$, and choose $N>1$ such that $g^N(V)\subset U_s$.
Define the projections $P, Q: \Bb \to \Bb$ by
$$
Pv(x) = 
\begin{cases}
    v(x), & \quad x \in V, \\
    0, & \quad x \in U_u;
\end{cases}
\quad 
Qv(x) = 
\begin{cases}
    0, & \quad x \in V, \\
    v(x), & \quad x \in U_u,
\end{cases}
$$
and set $E^s := P(\Bb)$, $E^u := Q(\Bb)$.
Note that 
\begin{equation}\label{eq:3-3-incl}
g(V) \subset V, \quad g^{-1}(U_u) \subset U_u, \quad V \cup U_u = \mathbb{S}^1.
\end{equation}
Hence 
$TE^s \subset E^s$, $T^{-1}E^u \subset E^u$, $P+Q = Id$.
For a suitable global bound $R_p\geq1$ on $b_p$ and $b_p^{-1}$, set
$C_p=(R_p/\lambda_p)^N$.
The choice of $U_s,U_u,V,N$ and \eqref{eq:3-3-incl} imply that
\begin{equation}\label{eq:Pushs}
|T^n v^s| \leq C_p\lambda_p^{n}|v^s|, \quad v^s \in E^s, n > 0;    
\end{equation}
\begin{equation}\label{eq:Pushu}
|T^{-n} v^u| \leq C_p\lambda_p^{n}|v^u|, \quad v^u \in E^u, n > 0,
\end{equation}
and $T$ is generalized hyperbolic with the splitting $\Bb = E^s \oplus E^u$.

\textbf{Case 2.} $\Bb = C^0(\mathbb{S}^1, \Rr)$.
For this case, take $p=+\infty$ in the preceding choice, so that
$b_\infty=a$. Consider the continuous functions 
$
\Xi_1:[\xi_1, g(\xi_1)] \to \Rr$, $\Xi_2:[\xi_2, g(\xi_2)] \to \Rr, 
$  
such that
\begin{equation}\label{eq:Xi01}
0 \leq \Xi_i \leq 1, \quad \Xi_1(\xi_1) = 0, \; \Xi_1(g(\xi_1)) = 1, \quad  \Xi_2(\xi_2) = 0, \; \Xi_2(g(\xi_2)) = 1.  
\end{equation}
Define the projection $P:\Bb \to \Bb$ as follows
$$
Pv(x) = 
\begin{cases}
v(x), & x \in g(V), \\
v(g(\xi_1))\Xi_1(x), & x \in [\xi_1, g(\xi_1)], \\
v(g(\xi_2))\Xi_2(x), & x \in [\xi_2, g(\xi_2)], \\
0, & x \in U_u.
\end{cases}
$$
By \eqref{eq:Xi01} if $v \in \Bb$, then $Pv \in \Bb$. Define the projection $Q:\Bb \to \Bb$ by $Qv = v - Pv$. Set $E^s = P(\Bb)$, $E^u = Q(\Bb)$. 
Note that 
\begin{itemize}
    \item if $v^s \in E^s$ then $\supp v^s \subset V$, if $v^u \in E^u$ then $\supp v^u \subset \Cl(\mathbb{S}^1 \setminus g(V))$;
    \item if $\supp v^s \subset g(V)$ then $v^s \in E^s$, if $\supp v^u \subset U_u = \mathbb{S}^1 \setminus V$ then $v^u \in E^u$.
\end{itemize}
Hence spaces $E^s$, $E^u$ satisfy inclusions \eqref{eq:GenHypIncl}.
The same argument as in Case 1, with $p=+\infty$, proves \eqref{eq:Pushs}, \eqref{eq:Pushu}. 
Therefore, $T$ is generalized hyperbolic. 
\end{proof}
\begin{statement}[Pushforward to bounded functions]
\label{stat:pushforward-Linfty}
    Let $f \in \DUC(\Bb)$ satisfy generalized hyperbolicity. Let $X = B(\Bb, \Bb)$ be the Banach space of bounded maps $\Bb \to \Bb$ with the supremum norm. Consider the linear isomorphism $T_f: X \to X$ defined as follows
\begin{equation}\label{eq:3-3-Tf}
    T_fv(x) = w(x), \quad \mbox{with $w(f(x)) = Df(x)v(x)$}.    
\end{equation}
    Then $T_f$ is generalized hyperbolic.
\end{statement}
\begin{proof}
Inequalities \eqref{eq:C1-2} imply that $T_f$ is a bounded linear isomorphism. Let $C$, $\lambda$ be the corresponding constants, and let $P_x, Q_x: \Bb \to \Bb$ be the family of projections associated with the generalized hyperbolicity of $f$. Define the projections $P, Q: X \to X$ by the following
$$
Pv(x) = P_xv(x), \quad Qv(x) = Q_xv(x).
$$
Condition (GH1) implies that for any $v \in X$ we have $Pv, Qv \in X$ and $|Pv|, |Qv| \leq C|v|$. Set $E^s = P(X)$, $E^u = Q(X)$. Condition (GH2) implies inclusions \eqref{eq:GenHypIncl}. Condition (GH3) implies
$$
|T_f^nv^s|\leq C \lambda^n |v^s|, \quad n > 0, \; v^s \in E^s,
$$
$$
|T_f^{-n}v^u|\leq C \lambda^n |v^u|, \quad n > 0, \; v^u \in E^u.
$$
Therefore $T_f$ is generalized hyperbolic.
\end{proof}

The pointwise construction in Proposition~\ref{stat:pushforward-Linfty} formally extends to other function spaces. 
On $X=C_b^0(\Bb,\Bb)$, however, the projections $Pv(x)=P_xv(x)$ and $Qv(x)=Q_xv(x)$ need not preserve continuity, while on $X=L^p(\Bb,\Bb)$ the same construction requires measurability of $x\mapsto P_x,Q_x$. Neither property is assumed. 
Thus the obstruction is regularity of the induced projections rather than the exponential estimates.

The same obstruction arises for pushforwards of vector fields. Let
$h:M\to M$ be a diffeomorphism of a smooth compact manifold, and let
$\Bb$ be either $L^p(M,TM)$, $p\in[1,+\infty]$, or $C_b^0(M,TM)$.
Define $S_h:\Bb\to\Bb$ by
\begin{equation}\label{eq:3-3-S}
S_h(v)(x) = w(x), \quad \mbox{with $w(h(x)) = Dh(x)v(x)$}.
\end{equation}
When $\Bb=C_b^0(M,TM)=C^0(M,TM)$, the operator $S_h$ is the classical Mather operator associated with $h$. Mather proved that $h$ is Anosov if and only if $S_h$ is a hyperbolic linear operator on $C^0(M,TM)$ \cite{Mather1968}. For connections between Axiom~A, the transversality condition, and surjectivity of $Id-S_h$, see \cite{Rob71,Mane1975,Mane1977,An2009}.
\begin{conj}\label{conj:AA}
   If $h$ satisfies Axiom A and the strong transversality condition, then $S_h$ is generalized hyperbolic as a linear operator on $\Bb = L^p(M, TM)$ for $p \in [1, + \infty]$, and also on $\Bb = C^0_b(M, TM)$.
\end{conj}
After completing this work, we became aware of an unpublished result announced by B.~Gollobit concerning the shadowing property of the Mather operator on continuous vector fields. The announced result is closely related to Conjecture~\ref{conj:AA}, but concerns shadowing rather than generalized hyperbolicity.

If $h$ is a north-south diffeomorphism, an argument similar to that in Example~\ref{ex:coh} shows that $S_h$ is generalized hyperbolic. More generally, Theorem~\ref{stat:finite-dimensional-characterization} provides the pointwise splittings and exponential estimates required in Conjecture~\ref{conj:AA}, so what remains is precisely the regularity obstruction described above, now for the projections induced on a space of vector fields.


\section{Scheme of the proofs of main results}\label{sec:scheme}

Although the main object of the paper is a diffeomorphism satisfying generalized hyperbolicity, the proofs proceed through its linearizations along trajectories, described by sequences of operators, and through the associated cocycle.
Table~\ref{tab:notions} summarizes these companion notions and their roles in the proofs.

\begin{table}[!ht]
\small
\renewcommand{\arraystretch}{1.25}
\begin{tabularx}{\textwidth}{@{}l p{2.6cm} >{\raggedright\arraybackslash}X >{\raggedright\arraybackslash}X@{}}
\toprule
\textbf{Acts on} & \textbf{Defined by} & \textbf{Invariance (inclusions)} & \textbf{Role in the paper} \\
\midrule
Linear isomorphism $T$ &
Definition~\ref{def:genhyp} &
$TE^s\subseteq E^s$\newline $T^{-1}E^u\subseteq E^u$ &
Motivating linear case; new linear examples (Section~\ref{sec:Examples}). \\

Diffeomorphism $f$ &
(GH1)--(GH3), Def.~\ref{def:CLf} &
$Df(x)E^s_x\subseteq E^s_{f(x)}$\newline $D(f^{-1})(x)E^u_x\subseteq E^u_{f^{-1}(x)}$ &
Central object; standing hypothesis of Theorems~\ref{thm:main-results} and~\ref{thm:SS}. \\

Sequence $\{A_k\}$ &
(GH1*)--(GH3*), Def.~\ref{def:CLAk}  &
$A_kE^s_k\subseteq E^s_{k+1}$\newline $A_k^{-1}E^u_{k+1}\subseteq E^u_k$ &
Linearization $\{Df(x_k)\}$ along orbits; robustness engine (Lemmas~\ref{lem:SBS-periodic},~\ref{lem:CL-robust-Ak}). \\

Cocycle $(\alpha,A)$ &
(C-GH1)--(C-GH3), Def.~\ref{def:CL-cocycle} &
$A(x)E^s_x\subseteq E^s_{\alpha(x)}$\newline $A^{-1}(x)E^u_{\alpha(x)}\subseteq E^u_x$ &
Structural stability via the cocycle $(f,Df)$ (Lemmas~\ref{lem:SS1},~\ref{lem:SS2}). \\
\bottomrule
\end{tabularx}
\caption{The four notions of generalized hyperbolicity used in the paper and their roles in the proofs.}
\label{tab:notions}
\end{table}

The common linear mechanism in the proofs is uniform bounded solvability of the inhomogeneous equations
\[
v_{k+1}=A_kv_k+w_{k+1}.
\]
The bounded solution property requires that every uniformly bounded sequence $\{w_k\}$ admits a uniformly bounded  solution $\{v_k\}$. 
The strong bounded solution property requires, in addition, that such a solution can be chosen linearly and continuously on $\{w_k\}$; equivalently, the associated difference operator has a bounded linear right inverse.

Generalized hyperbolicity yields the strong bounded solution property for linearizations along exact trajectories. A perturbation result for this property supplies the bounded solution property for nearby sequences, which is the form used for linearizations along pseudotrajectories. Precise definitions are given in Section~\ref{sec:inh}.

To prove Theorem~\ref{thm:main-results}\,(\ref{item:main-LipSh})--(\ref{item:main-LipPerSh}), we first establish the finite Lipschitz shadowing property (Lemma~\ref{lem:FinLipSh}).

The proof is by induction on the length of a pseudotrajectory.
Assuming that shadowing has been constructed on $[-N,N]$, we use the strong bounded solution property for the differentials $B_k=Df(y_k)$ to extend it to $[-(N+1),N+1]$. 
This construction is based on the ideas developed by the author in \cite{TikhZapSem, MonTikh, TikhHolSh}.

The proofs of Theorem~\ref{thm:main-results}\,(\ref{item:main-LipSh}) and Theorem~\ref{thm:SS} form the technical core of the paper. 
The proof of Lipschitz shadowing has three main steps.

\begin{enumerate}
    \item Lemmas \ref{lem:SBS-robust} and \ref{lem:FinLipSh} imply that the differentials $B_k=Df(y_k)$ along every sufficiently accurate finite pseudotrajectory satisfy the bounded solution property with constant~$2L$.

    \item Lemma~\ref{lem:Bk-Inf} extends this property to pseudotrajectories indexed by $\Zz$, with constant $4L$.

    \item Lemma~\ref{lem:BS-SH} uses this bounded solvability to construct successively more accurate pseudotrajectories at summable distances from one another. 
    Their limit is an exact trajectory shadowing the original pseudotrajectory.
\end{enumerate}

For a periodic pseudotrajectory, Lipschitz shadowing and Lemma~\ref{lem:CL-robust-Ak} imply generalized hyperbolicity of the periodic sequence of differentials, although its splitting need not be periodic.
Lemma~\ref{lem:SBS-periodic} then gives the periodic bounded solution property, and the same correction procedure yields periodic shadowing. Density of periodic points in the chain-recurrent set follows from periodic shadowing, while robustness follows from the shadowing results and Lemma~\ref{lem:CL-robust-Ak} on robustness of generalized hyperbolicity for sequences of operators.


For Theorem~\ref{thm:SS}, we introduce generalized hyperbolicity for cocycles and construct an inverse operator analogous to that arising from the strong bounded solution property (Lemma~\ref{lem:SS1}).
A Grobman--Hartman-type argument produces one semi-conjugacy, while robustness of generalized hyperbolicity for cocycles produces the reverse one (Lemma~\ref{lem:SS2}).
The semi-conjugacies are constructed as near-identity maps whose corrections take values in $E_x^s\oplus F_x$. If these subspaces are independent of $x$, the admissible corrections remain in the same space under composition, and uniqueness for the corresponding conjugacy equations implies that the two semi-conjugacies are inverse to each other.

Figure~\ref{fig:roadmap} summarizes the logical dependencies between these arguments.

\begin{figure}[t]
\centering
\resizebox{\textwidth}{!}{%
\begin{tikzpicture}[
  box/.style={draw, rounded corners=1pt, align=center, font=\scriptsize,
              text width=2.55cm, inner sep=3pt, minimum height=0.95cm},
  hyp/.style={box, very thick, fill=black!12},
  lem/.style={box, fill=white},
  thm/.style={box, thick, fill=black!7},
  ext/.style={box, dashed, fill=white},
  ar/.style={-{Latex[length=1.8mm]}, draw=black!70, line width=0.6pt},
]
\node[hyp] (fgh)  at (0,3.0)   {Diffeo $f$ with gen. hyp. (Def.~\ref{def:CLf})};
\node[ext] (bern) at (0,-0.0)  {Linear shadowing\\ Thm~\ref{thm:Bernandes-shadowing}~\cite{NilsonPeris}};

\node[lem] (sbs)    at (4.3,3.0)  {Lem~\ref{lem:SBS}\\ $\{Df(x_k)\}$ has the SBSP};
\node[lem] (sbsr)   at (4.3,1.6)  {Lem~\ref{lem:SBS-robust}\\ SBSP $\Rightarrow$ BSP under perturbation};
\node[lem] (bkinf)  at (4.3,0.0)  {Lem~\ref{lem:Bk-Inf}\\ finite BSP $\Rightarrow$ BSP on $\mathbb{Z}$};
\node[lem] (clrob)  at (8.7,-0.8) {Lem~\ref{lem:CL-robust-Ak}\\ robustness of seq.\ gen.\ hyp.};
\node[lem] (ss1)    at (8.7,-3.0) {Lem~\ref{lem:SS1}\\ inverse of $T_{\alpha,A}$};
\node[lem] (ss2)    at (8.7,-4.6) {Lem~\ref{lem:SS2}\\ robustness of cocycle gen.\ hyp.};

\node[lem] (fin)  at (8.7,3.0) {Lem~\ref{lem:FinLipSh}\\ finite shadowing};
\node[lem] (bssh) at (8.7,1.4) {Lem~\ref{lem:BS-SH}\\ BSP $\Rightarrow$ LipSh};

\node[thm] (tlip)   at (13.0,3.0)  {Thm~\ref{thm:main-results}(\ref{item:main-LipSh})\\ Lipschitz shadowing};
\node[thm] (tper)   at (13.0,1.2)  {Thm~\ref{thm:main-results}(\ref{item:main-LipPerSh})\\ periodic shadowing};
\node[thm] (tchain) at (13.0,-0.4) {Thm~\ref{thm:main-results}(\ref{item:main-chain})\\ periodic orbits dense in $CR(f)$};
\node[thm] (trob)   at (13.0,-2.2) {Thm~\ref{thm:main-results}(\ref{item:main-robust})\\ robustness of gen.\ hyp.};
\node[thm] (tss)    at (13.0,-4.0) {Thm~\ref{thm:SS}\\ (semi-)structural stability};

\draw[ar] (fgh)  -- (sbs);
\draw[ar] (bern) -- (bkinf);
\draw[ar] (sbs)  -- (fin);
\draw[ar] (sbsr) -- (bssh);
\draw[ar] (sbsr) -- (fin);
\draw[ar] (bkinf)-- (bssh);
\draw[ar] (fin)  -- (bssh);
\draw[ar] (bssh) -- (tlip);
\draw[ar] (tlip) -- (tper);
\draw[ar] (clrob)-- (tper);
\draw[ar] (tper) -- (tchain);
\draw[ar] (clrob)-- (trob);
\draw[ar] (ss1)  -- (tss);
\draw[ar] (ss2)  -- (tss);
\draw[ar] (tlip.east) -- ++(0.75,0) |- (trob.east);
\draw[ar] (tper.east) -- ++(0.40,0) |- (trob.east);
\end{tikzpicture}}
\caption{Logical structure of the proofs. Thick, plain, shaded, and dashed boxes denote, respectively, the standing hypothesis, auxiliary lemmas, main theorems, and external input; arrows indicate dependence.}
\label{fig:roadmap}
\end{figure}

\section{Inhomogeneous linear equations}\label{sec:inh}

We now formalize the bounded-solvability mechanism described in Section~\ref{sec:scheme}.

\begin{defin}\label{def:bs}
    We say that a sequence of operators $\mathcal{A} = \{A_{k\in I}:\Bb \to \Bb\}$ ($I$ as in \eqref{eq:I}) with uniformly bounded norms $\|A_k\|, \|A_k^{-1}\| \leq R$ satisfies the \textit{bounded solution property on interval $I$ with constant $L>0$} if  for any sequence $\{w_{k+1} \in \Bb\}_{k \in \tilde{I}}$ with $|w_{k+1}| \leq 1$ there exists a sequence $\{v_k \in \Bb\}_{k \in I}$ with $|v_k| \leq L$ such that
    \begin{equation}\label{eq:Inh}
    v_{k+1} = A_k v_k + w_{k+1}, \quad k \in \tilde{I}.        
    \end{equation}
    If $I = \Zz$ we say that $\mathcal{A}$ satisfies the \textit{bounded solution property with constant $L>0$}.
\end{defin}

Equations \eqref{eq:Inh} are called \textit{inhomogeneous linear equations} \cite{Pli77, Coppel, Maizel1954, Palmer1984, Palmer1988, Slyusarchuk1983, ChiconeLatushkin1999, LRS-JDDE1998} and have been widely used in studies of the shadowing property \cite{Palmer2000, TikhLipSh, TikhHolSh, TikhPerSh, TikhZapSem, MonTikh, PriezzhevsTikh, Pilyugin2010, Todorov2013, Palmer1984, Palmer1988DR, BackesDragicevic2021, BD2019, BD2022}.
The bounded solution property implies the surjectivity of the operator 
$T_{\mathcal{A}}: \ell^{\infty}(I, \Bb) \to \ell^{\infty}(\tilde{I}, \Bb)$ defined by:
\begin{equation}\label{eq:TA}
T_{\mathcal{A}}\{v_k\}_{k \in I} = \{v_{k+1}-A_kv_k\}_{k \in \tilde{I}}.   
\end{equation}

\begin{defin}\label{def:sbs}
If there exists  a bounded linear right inverse $S_{\mathcal{A}}$ of the operator $T_{\mathcal{A}}$ defined by \eqref{eq:TA} with $I$ defined in \eqref{eq:I} and $\|S_{\mathcal{A}}\| \leq L$, we say that $\mathcal{A}$ has the \textit{strong bounded solution property on interval $I$ with constant $L > 0$}. 
\end{defin}

In the study of the bounded solution property, the notion of exponential dichotomy is widely used \cite{BDV2018,ChiconeLatushkin1999,Coppel,Henry94,Maizel1954,
Palmer1988,Slyusarchuk1983}; we recall it here for completeness of exposition.

Consider a two-sided sequence of bounded linear isomorphisms.
$$
\mathcal{A}=\{A_k: \Bb \to \Bb\}_{k\in\mathbb Z}, \qquad 
\|A_k\|,\;\|A_k^{-1}\|\le R.
$$
Denote the associated cocycle by
$$
\Phi_A(k,\ell)=\begin{cases}
 A_{k-1}\dots A_{\ell} & (\ell<k),\\[2pt]
 I & (\ell=k),\\[2pt]
 A_k^{-1}\dots A_{\ell-1}^{-1} & (\ell>k).
 \end{cases}
$$

\begin{defin}[Exponential dichotomy]\label{def:exp-dichotomy}
We say that the sequence $\mathcal{A}$ admits an \emph{exponential dichotomy} on $I$, where $I \in \{\Zz^+, \Zz^-, \Zz\}$, if there exist
constants $C>0$, $\lambda\in(0,1)$ and families of bounded projections
$P_k,Q_k:\Bb \to \Bb$ ($k\in\mathbb Z$), satisfying the following properties. Denote $E^s_k:=P_k(\Bb)$, $E^u_k:=Q_k(\Bb)$.
\begin{enumerate}
    \item[(ED1)] \textbf{Splitting.}  $P_k+Q_k=Id$, 
          $\|P_k\|,\|Q_k\|\le C$,  hence 
          $
          E^s_k\oplus E^u_k = \Bb
          $.
    \item[(ED2)] \textbf{Invariance.}  The splitting is carried exactly
          onto itself: $A_kE^s_k=E^s_{k+1}$, $A_k^{-1}E^u_{k+1}=E^u_{k}$ for $k\in I$.
          Equivalently $A_kP_k=P_{k+1}A_k$ and $A_kQ_k=Q_{k+1}A_k$.
    \item[(ED3)] \textbf{Exponential estimates.}  For $n \geq 0$ the following estimates hold   
          \begin{align*}
             &\|\Phi_A(k+n, k)  v^s\|\le C\lambda^n\|v^s\|,
               && v^s\in E^s_k, \; [k, k+n] \subset I, \\
             &\|\Phi_A(k-n, k) v^u\|\le C\lambda^n\|v^u\|,
               && v^u\in E^u_k, \; [k-n, k] \subset I .
          \end{align*}
\end{enumerate}
\end{defin}

\begin{defin}[Transversality condition]\label{def:transversality}
Assume that the restrictions of the sequence $\mathcal{A}$ to the future and to the past,
$
{\mathcal{A}}^{+}:=\{A_k\}_{k\ge 0}, {\mathcal{A}}^{-}:=\{A_k\}_{k\le 0},
$
each admit an exponential dichotomy with splittings
$E^{s,+}_k\oplus E^{u,+}_k=\Bb$ and
$E^{s,-}_k\oplus E^{u,-}_k=\Bb$, respectively.
We say that $\mathcal{A}$ satisfies the \emph{transversality condition} if
$
E^{s,+}_0\ + E^{u,-}_0 = \Bb.
$
\end{defin}

In finite dimensions, an exponential dichotomy on
$\mathbb Z_{\ge 0}$ and $\mathbb Z_{\le 0}$ together with the transversality
condition is equivalent to the existence of a bounded (hence continuous)
right inverse of the inhomogeneous difference operator
$T_{\cal A}\colon\ell^\infty(\mathbb Z,\Bb)\to\ell^\infty(\mathbb Z,\Bb)$,
$T_{\cal A}\{v_k\}=\{v_{k+1}-A_k v_k\}$, as proved in~\cite{Pli77, Coppel}.  This is precisely the
strong bounded solution property introduced in Definition~\ref{def:sbs}.  
For infinite-dimensional Banach spaces, a similar statement is not known in general. However, in many settings, an analog of this statement has been established under additional assumptions of uniqueness or of the possibility of making a canonical choice of the sequence \eqref{eq:Inh}; see, for instance, \cite{LRS-JDDE1998, SellYou, HuyMinhCMAA2001}.

In the following, we introduce the sequence-of-operators analog of generalized hyperbolicity (Definition~\ref{def:CLf}), which we believe is an appropriate analog of exponential dichotomy together with the transversality condition for the purposes of this paper.
\begin{defin}\label{def:CLAk}
    Let $\mathcal{A} = \{A_k: \Bb \to \Bb\}_{k \in \Zz}$ be a sequence of isomorphisms satisfying $\|A_k\|, \|A^{-1}_k\| \leq R$ for some $R>0$. We say that $\mathcal{A}$ satisfies \textit{generalized hyperbolicity} if there exist $C > 0$, $\lambda \in (0, 1)$ and  families of projections $P_k, Q_k: \Bb \to \Bb$ satisfying the following properties. Denote $E^s_k = P_k(\Bb)$, $E^u_k = Q_k(\Bb)$. For every $k \in \Zz$ the following holds
\begin{itemize}
    \item[(GH1*)] \textbf{Splitting.} $P_k + Q_k = Id$, $\|P_k\|, \|Q_k\| \leq C$. Note that $E^s_k \oplus E^u_k = \Bb$.
    \item[(GH2*)] \textbf{Inclusion.} The spaces $E^s_k$ and $E^u_k$ are forward- and backward-invariant, respectively\footnote{Compare with condition (ED2), the only difference is inclusion instead of equality.}
    $$
    A_kE^s_k \subseteq E^s_{k+1}, \quad A_k^{-1}E^u_{k+1} \subseteq E^u_{k}.
    $$
    \item[(GH3*)] 

    \textbf{Exponential estimates.}  For every $n\ge 0$    
          \begin{align*}
             &\|\Phi_A(k+n, k)  v^s\|\le C\lambda^n\|v^s\|,
               && v^s\in E^s_k, \\
             &\|\Phi_A(k-n, k) v^u\|\le C\lambda^n\|v^u\|,
               && v^u\in E^u_k.
          \end{align*}
\end{itemize}
    For a periodic sequence of operators with period $m$ (i.e. $A_{k+m} = A_k$), we say that $\mathcal{A}$ satisfies \textit{periodic generalized hyperbolicity} if projections $P_k$, $Q_k$ are also $m$-periodic, that is, $P_{k+m} = P_k$, $Q_{k+m} = Q_k$.
\end{defin}

Definitions~\ref{def:CLAk} and \ref{def:CLf} are related as follows.


\begin{lem}[Orbitwise characterization]\label{lem:orbitwise}
Let $f \in \DUC(\Bb)$, $C > 0$, $\lambda \in (0,1)$. Then $f$ satisfies generalized
hyperbolicity with constants $C, \lambda$ if and only if
\begin{itemize}
\item for every nonperiodic trajectory $\{x_k\}_{k \in \Zz}$ of $f$, the sequence
$\{Df(x_k)\}_{k \in \Zz}$ satisfies generalized hyperbolicity with constants $C, \lambda$;
\item for every $m$-periodic trajectory $\{x_k\}_{k \in \Zz}$ of $f$, the sequence
$\{Df(x_k)\}_{k \in \Zz}$ satisfies $m$-periodic generalized hyperbolicity with constants
$C, \lambda$.
\end{itemize}
\end{lem}
\begin{proof}
If $f$ satisfies generalized hyperbolicity, set $E^{s,u}_k := E^{s,u}_{x_k}$. Then
{\rm(GH1*)}--{\rm(GH3*)} are {\rm(GH1)}--{\rm(GH3)} at the points $x_k$, and if
$x_{k+m} = x_k$ the splitting is automatically $m$-periodic.

Conversely, the trajectories of $f$ partition $\Bb$. Choose a point $y_\omega$ in each
trajectory $\omega$ and let $\{E^s_k, E^u_k\}$ be a splitting as in the hypothesis for the
sequence $\{Df(f^k(y_\omega))\}$. The definition $E^{s,u}_{f^k(y_\omega)} := E^{s,u}_k$ is
unambiguous: for a nonperiodic $\omega$ the points $f^k(y_\omega)$ are pairwise distinct,
and for an $m$-periodic $\omega$ this is exactly the $m$-periodicity of the splitting.
Conditions {\rm(GH1)}--{\rm(GH3)} then hold with constants $C, \lambda$.
\end{proof}

Let $\mathcal{A}$ be as in Definition~\ref{def:CLAk}. 
Suppose $\mathcal{A}$ admits exponential dichotomies on $\Zz^+$ and on $\Zz^-$ and satisfies the transversality condition. In finite dimensions this implies that $\mathcal{A}$ satisfies generalized hyperbolicity; in fact, the two notions coincide, this being the $(C,\lambda)$-structure of \cite{PilCL}. In infinite dimensions, the forward implication is considerably more delicate and, unlike the finite-dimensional case, is not a routine adaptation of the classical argument; we return to it in Section~\ref{sec:discussion}. We do not use this direction in the proofs. In infinite dimensions the two notions no longer coincide: generalized hyperbolicity is strictly weaker, since the sequence of Example~\ref{ex:CL-not-ED} satisfies generalized hyperbolicity but admits no exponential dichotomy on either $\Zz^+$ or $\Zz^-$.

To prove Lemma \ref{lem:FinLipSh} and Theorem~\ref{thm:main-results}\,(\ref{item:main-LipSh}), we use the following statements that describe the properties of bounded solutions.

\begin{lem}\label{lem:SBS}
If $f \in \DUC(\Bb)$ satisfies generalized hyperbolicity, then there exists $L>0$ such that, for any exact trajectory $\{x_k\}$ and interval $I$ (finite or infinite), the associated sequence of linear operators $\mathcal{A} = \{A_k = Df(x_k)\}_{k \in I}$ satisfies the strong bounded solution property on $I$ with constant $L$. 
\end{lem}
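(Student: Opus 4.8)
The plan is to construct the bounded right inverse $S_{\mathcal{A}}$ explicitly by splitting the inhomogeneity $\{w_{k+1}\}$ along the stable and unstable subspaces and summing two absolutely convergent series, one going forward on the stable part and one going backward on the unstable part. Concretely, for a given bounded sequence $\{w_{k+1}\}_{k \in \tilde I}$ with $|w_{k+1}| \le 1$, I would first use the projections to write each $w_{k+1} = P_{k+1} w_{k+1} + Q_{k+1} w_{k+1}$, and seek the solution in the form $v_k = v_k^s + v_k^u$ with $v_k^s \in E^s_k$ and $v_k^u \in E^u_k$. The natural Ansatz is
\begin{equation}\label{eq:SBS-ansatz}
v_k^s = \sum_{j \le k-1} \Phi_A(k, j+1)\, P_{j+1} w_{j+1}, \qquad v_k^u = -\sum_{j \ge k} \Phi_A(k, j+1)\, Q_{j+1} w_{j+1},
\end{equation}
where on a finite interval $I$ the sums are simply truncated at the endpoints. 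One checks directly that $v_{k+1} - A_k v_k = w_{k+1}$ on $\tilde I$: the stable sum telescopes so that $v_{k+1}^s - A_k v_k^s = P_{k+1} w_{k+1}$, and the unstable sum telescopes so that $v_{k+1}^u - A_k v_k^u = Q_{k+1} w_{k+1}$; here I must be careful that the Ansatz stays inside the right spaces, which is exactly where (CL2*)/(CL2) enter — $\Phi_A(k,j+1) P_{j+1}(\Bb) \subseteq E^s_k$ by forward invariance of $E^s$, and $\Phi_A(k, j+1) Q_{j+1}(\Bb) \subseteq E^u_k$ needs $j \ge k$ together with backward invariance of $E^u$.

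**Next I would** bound $|v_k|$ uniformly. Using (CL1) to get $|P_{j+1} w_{j+1}| \le C$ and $|Q_{j+1} w_{j+1}| \le C$, and (CL3) in the form $|\Phi_A(k, j+1) v^s| \le C \lambda^{k - j - 1} |v^s|$ for $k \ge j+1$ on the stable part and the analogous $|\Phi_A(k, j+1) v^u| \le C \lambda^{j+1-k}|v^u|$ for $j + 1 \ge k$ on the unstable part, the two series are dominated by geometric series:
\begin{equation}\label{eq:SBS-bound}
|v_k^s| \le \sum_{j \le k-1} C \lambda^{k-j-1} \cdot C = \frac{C^2}{1-\lambda}, \qquad |v_k^u| \le \sum_{j \ge k} C \lambda^{j+1-k} \cdot C = \frac{C^2 \lambda}{1-\lambda},
\end{equation}
so $|v_k| \le L := 2C^2/(1-\lambda)$, uniformly in $k$ and independently of the interval $I$ (on finite $I$ the truncated sums are only smaller). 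Since the map $\{w_{k+1}\} \mapsto \{v_k\}$ given by \eqref{eq:SBS-ansatz} is manifestly linear and, by the same estimate, bounded from $\ell^\infty(\tilde I, \Bb)$ to $\ell^\infty(I, \Bb)$ with norm at most $L$, this defines $S_{\mathcal{A}}$ and shows $\|S_{\mathcal{A}}\| \le L$, giving the strong bounded solution property on $I$ with constant $L$. The constant $L$ depends only on $C$ and $\lambda$, hence is the same for all trajectories $\{x_k\}$ and all intervals, as claimed.

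**The main technical point to be careful about** is the interplay between the inclusions (CL2) and the one-sided exponential estimates (CL3). Unlike in a genuine exponential dichotomy, the cocycle $\Phi_A(k, j+1)$ restricted to $E^u$ need not be surjective onto $E^u_k$ and the unstable subspaces need not grow exponentially in forward time, so I cannot invoke any uniqueness or expansivity. What saves the construction is that I never need to invert anything on the unstable side: the backward cocycle $\Phi_A(k, \cdot)$ is applied only to vectors already sitting in higher-index unstable subspaces, and the inclusion $A_k^{-1} E^u_k \subseteq E^u_{k-1}$ guarantees these land in $E^u_k$, while (CL3) controls their norm. Similarly on the stable side $A_k E^s_k \subseteq E^s_{k+1}$ is used to place $\Phi_A(k, j+1) P_{j+1} w_{j+1}$ in $E^s_k$. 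Thus the proof is essentially the standard "sum over the stable future plus sum over the unstable past" argument, and the only delicate bookkeeping is verifying at each step that the summands lie in the correct subspaces so that the telescoping identities hold — a routine but necessary check given that the splitting is merely invariant by inclusion and is not assumed continuous in $k$.
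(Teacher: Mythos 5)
Your proposal is correct and coincides with the paper's proof: both construct $S_{\mathcal{A}}$ by the same Perron--Green's-function sums, splitting the inhomogeneity by $P_{k}$ and $Q_{k}$, verifying the telescoping identity, and bounding the two geometric series via (CL1) and (CL3) to get a constant depending only on $C$ and $\lambda$. The paper records the slightly sharper $L = C^{2}(1+\lambda)/(1-\lambda)$ where you round up to $2C^{2}/(1-\lambda)$, which is immaterial.
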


\begin{lem}\label{lem:SBS-robust}
For any $L > 0$ there exists $\varepsilon > 0$ such that the following holds. Let $I$ be an interval (finite or infinite), and let $\mathcal{A} = \{A_k\}_{k \in I}$ be a sequence satisfying the strong bounded solution property on $I$ with constant $L$. If $\mathcal{B} = \{B_k\}$ is another sequence of uniformly bounded isomorphisms satisfying 
$\|B_k - A_k\| \leq \varepsilon$,
then it satisfies the bounded solution property on $I$ with constant~$2L$.
\end{lem}

\begin{lem}\label{lem:Bk-Inf}
    Let $\mathcal A=\{A_k\}_{k\in\Zz}$ be a sequence of linear isomorphisms such that $\|A_k\|,\|A_k^{-1}\|\leq R$.
 Assume that there exists $L>0$ such that for any finite interval $I = (a, b)$ the sequence $\{A_k\}_{k \in I}$ satisfies the bounded solution property on $I$ with constant $L$. Then the complete sequence $\{A_k\}_{k \in \Zz}$ satisfies the bounded solution property on $\Zz$ with the constant $2L$.
\end{lem}

\begin{lem}\label{lem:SBS-periodic}
Assume that $\mathcal{A} = \{A_k\}_{k \in \Zz}$ is $m$-periodic ($A_{k+m}=A_k$, $k \in \Zz$) and satisfies generalized hyperbolicity with constants $C, \lambda$, with a not necessarily periodic splitting. Then there exists $L = L(C, \lambda)>0$ such that for every bounded $m$-periodic sequence 
$w=\{w_k\}_{k\in\mathbb{Z}}\subset\mathbb{B}$ with $|w_k| \leq 1$ there exists an $m$-periodic sequence $v=\{v_k\}_{k\in\mathbb{Z}}\subset\mathbb{B}$, satisfying 
\begin{equation}\label{eq:inhom-diff}
v_{k+1}=A_k v_k + w_{k+1}, \quad |v_k| \leq L \qquad k\in\mathbb{Z}.
\end{equation}
\end{lem}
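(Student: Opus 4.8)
The plan is to build the periodic bounded solution in three moves: first produce \emph{some} bounded (not necessarily periodic) solution whose size is already governed by a constant $L(C,\lambda)$; then average its $m$-shifts so that the failure to be periodic becomes arbitrarily small; and finally kill the residual obstruction by subtracting an explicit homogeneous solution coming from the monodromy operator. Notably, reflexivity of $\Bb$ plays no role here — only completeness is used, through absolute convergence of the series below — so this is exactly the place where the argument must differ from the reflexive machinery of Lemma~\ref{lem:Bk-Inf}.

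First I would fix the projections $P_k,Q_k$ supplied by the generalized $(C,\lambda)$-structure of $\mathcal A=\{A_k\}_{k\in\Zz}$ and, for $\{w_k\}$ with $|w_k|\le 1$, write down the Green's-function solution
\[
u_k=\sum_{j\le k}\Phi_A(k,j)P_jw_j-\sum_{j>k}\Phi_A(k,j)Q_jw_j .
\]
By (CL1*) one has $|P_jw_j|,|Q_jw_j|\le C$, and by (CL3*) $|\Phi_A(k,j)P_jw_j|\le C^2\lambda^{k-j}$ for $j\le k$ and $|\Phi_A(k,j)Q_jw_j|\le C^2\lambda^{j-k}$ for $j>k$; hence both series converge absolutely and $|u_k|\le\frac{2C^2}{1-\lambda}=:L(C,\lambda)$. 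A one-line telescoping using $A_k\Phi_A(k,j)=\Phi_A(k+1,j)$ and $P_{k+1}+Q_{k+1}=Id$ shows $u_{k+1}=A_ku_k+w_{k+1}$ for all $k$. (This is the sequence-level form of Lemma~\ref{lem:SBS}; it uses the hypotheses of Lemma~\ref{lem:SBS-periodic} but not periodicity.)

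Next, set $T=\Phi_A(m,0)$. Since $\mathcal A$ is $m$-periodic, $\Phi_A(nm,0)=T^{n}$ and $\Phi_A(-nm,0)=T^{-n}$, so (CL3*) gives $|T^{n}v^s|\le C\lambda^{nm}|v^s|$ for $v^s\in E^s_0$ and $|T^{-n}v^u|\le C\lambda^{nm}|v^u|$ for $v^u\in E^u_0$ ($n\ge 0$). Splitting $b=P_0b+Q_0b$, the series $\zeta(b):=\sum_{n\ge 0}T^{n}(P_0b)-\sum_{n\ge 1}T^{-n}(Q_0b)$ then converges, satisfies $|\zeta(b)|\le\frac{2C^2}{1-\lambda}|b|$, and $(Id-T)\zeta(b)=P_0b+Q_0b=b$; thus $Id-T$ is onto with a right inverse of norm $\le\frac{2C^2}{1-\lambda}$, a bound free of $R$ and $m$. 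Now for an integer $N\ge 1$ I set $v^{(N)}_k=\frac1N\sum_{i=0}^{N-1}u_{k+im}$. Periodicity of $\{A_k\}$ and $\{w_k\}$ gives $v^{(N)}_{k+1}=A_kv^{(N)}_k+w_{k+1}$ and $|v^{(N)}_k|\le L(C,\lambda)$, while $v^{(N)}_m-v^{(N)}_0=\frac1N(u_{Nm}-u_0)$ telescopes, so $\bigl|v^{(N)}_m-v^{(N)}_0\bigr|\le\frac{2L(C,\lambda)}{N}$. Taking $\zeta=\zeta\!\left(v^{(N)}_0-v^{(N)}_m\right)$ and $v_k=v^{(N)}_k-\Phi_A(k,0)\zeta$, the correction $k\mapsto\Phi_A(k,0)\zeta$ solves the homogeneous equation, so $v$ still solves $v_{k+1}=A_kv_k+w_{k+1}$; and $v_m-v_0=\bigl(v^{(N)}_m-v^{(N)}_0\bigr)-(T-Id)\zeta=0$. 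Since $k\mapsto v_{k+m}$ solves the same inhomogeneous equation (by $m$-periodicity of the data), $v_{k+m}-v_k=\Phi_A(k,0)(v_m-v_0)=0$, i.e.\ $v$ is $m$-periodic.

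It remains to choose $N$ so that the bound depends on $C,\lambda$ only, and this is where the main obstacle lies. For $0\le k\le m-1$ we have $\|\Phi_A(k,0)\|\le R^{m-1}\le R^m$ (recall $R\ge 1$), hence $|v_k|\le L(C,\lambda)\bigl(1+\frac{4C^2}{1-\lambda}\cdot\frac{R^m}{N}\bigr)$; choosing any integer $N\ge R^m$ yields $|v_k|\le L(C,\lambda)\bigl(1+\frac{4C^2}{1-\lambda}\bigr)$ for $0\le k\le m-1$, hence for all $k$ by periodicity, which is the asserted estimate with constant depending on $C,\lambda$ alone. The delicate point is precisely this $C,\lambda$-dependence: solving the monodromy equation head-on, $(Id-T)v_0=\sum_{j=1}^{m}\Phi_A(m,j)w_j$, does produce a periodic solution, but its norm is controlled only by something of order $mR^m\cdot\frac{C^2}{1-\lambda}$ and is amplified once more by $\|\Phi_A(k,0)\|\le R^m$ when $v_0$ is propagated over a period — both blowing up with $R$ and $m$. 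Averaging is what circumvents this: it makes the residual non-periodicity, and with it the $R^m$-amplified correction $\Phi_A(k,0)\zeta$, as small as desired while leaving the uniform $L(C,\lambda)$-bound of the Green's function intact.
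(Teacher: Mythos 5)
Your proposal is correct but follows a genuinely different route than the paper. The paper writes down a single Green's-function formula
\[
v_k=\sum_{i\le k}\Phi_A(k,i)P_{i\bmod m}w_i-\sum_{i>k}\Phi_A(k,i)Q_{i\bmod m}w_i,
\]
i.e.\ it replaces $P_i,Q_i$ by the residue-indexed projections $P_{i\bmod m},Q_{i\bmod m}$; the $m$-periodicity of $\Phi_A$ then makes the summand, and hence $v_k$, manifestly $m$-periodic, the exponential estimates follow after shifting each index back to $[0,m)$ by periodicity, and the recurrence is checked by the same one-line telescoping as in Lemma~\ref{lem:SBS}. Your argument instead keeps the original $P_i,Q_i$, accepts a non-periodic bounded solution $u_k$, Ces\`aro-averages its $m$-shifts to shrink the periodicity defect to $O(1/N)$, and then cancels the defect exactly using a right inverse of $Id-\Phi_A(m,0)$ built from the splitting at index $0$. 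Both routes avoid reflexivity and both hit a $C,\lambda$-only bound; the paper's is a one-formula fix, while yours is a three-step scheme whose averaging step is precisely what neutralizes the $R^m$ amplification that a naive monodromy correction would incur (a point you rightly emphasize). One minor gap in exposition: the convergence of $\sum_{n\ge 0}T^nP_0b$ and $\sum_{n\ge 1}T^{-n}Q_0b$ rests on $\Phi_A(nm,0)=T^n$, $\Phi_A(-nm,0)=T^{-n}$, which requires the identity $\Phi_A(k+m,\ell+m)=\Phi_A(k,\ell)$; you use it but should state it, as the paper does. Overall your proof is correct, more elaborate than the paper's, but it does make the source of the $C,\lambda$-dependence and the absence of any $R,m$-dependence more transparent.
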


We prove the robustness of generalized hyperbolicity for sequences of operators.
\begin{lem}\label{lem:CL-robust-Ak}
Assume that $\mathcal{A}=\{A_k\}$ satisfies generalized hyperbolicity
with constants $C>0$, $\lambda\in(0,1)$ and $\|A_k\|,\|A_k^{-1}\|\leq R$, for $k\in\Zz$. 
Then for any $\lambda_1 \in (\lambda, 1)$, there exist $C_1 = C_1(C, \lambda, \lambda_1, R)$ and $\varepsilon > 0$ such that 
    \begin{itemize}
        \item[(Robust1)]  any sequence $\mathcal{B} = \{B_k\}_{k \in \Zz}$, where 
\begin{equation}\label{eq:AkBk}
        \|B_k - A_k\| < \varepsilon, \quad k \in \Zz  
\end{equation}
        also satisfies generalized hyperbolicity with constants $C_1$, $\lambda_1$;
        \item [(Robust2)] if, in addition, the sequence $\mathcal{A}$ is $m$-periodic and satisfies periodic generalized hyperbolicity, then any $m$-periodic sequence $\mathcal{B} = \{B_k\}_{k \in \Zz}$ satisfying \eqref{eq:AkBk} also satisfies periodic generalized hyperbolicity with constants $C_1$, $\lambda_1$.
    \end{itemize}
\end{lem}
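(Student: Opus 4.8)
The plan is to perturb the splitting $E^s_k \oplus E^u_k$ of $\mathcal{A}$ into a nearby splitting adapted to $\mathcal{B}$ by a graph-transform (invariant-cone) argument carried out separately in the stable and the unstable directions, and then verify conditions (CL1*)–(CL3*) with the relaxed rate $\lambda_1$. First I would fix $\lambda_1 \in (\lambda, 1)$, choose $n_0$ large enough that $C\lambda^{n_0} < \tfrac{1}{4}(\lambda_1)^{n_0}$ (or any convenient margin), and reduce everything to blocks of length $n_0$: set $\hat A_k = \Phi_A(k n_0 + n_0, k n_0)$, so that on the blocked cocycle the stable/unstable estimates become genuine contractions $\|\hat A_k|_{E^s_{kn_0}}\| \le C\lambda^{n_0}$ and $\|\hat A_k^{-1}|_{E^u_{kn_0}}\| \le C\lambda^{n_0}$. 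For $\mathcal{B}$ with $\|B_k - A_k\| < \varepsilon$, the blocked operators $\hat B_k$ satisfy $\|\hat B_k - \hat A_k\| \le \varepsilon'$ with $\varepsilon' = \varepsilon'(\varepsilon, R, n_0, C) \to 0$ as $\varepsilon \to 0$ (telescoping the product of $n_0$ factors each of norm $\le R$).

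Next, working in the coordinates $\Bb = E^s_{kn_0} \oplus E^u_{kn_0}$ with the block matrix decomposition of $\hat B_k$ having diagonal blocks close to the contracting $\hat A_k$-blocks and off-diagonal blocks of size $O(\varepsilon')$, I would produce the perturbed \emph{stable} subspace $\tilde E^s_{kn_0}$ as the graph of a linear map $\Gamma^s_k : E^s_{kn_0} \to E^u_{kn_0}$ with $\|\Gamma^s_k\| \le 1$, obtained as the unique fixed point of the graph-transform operator built from $\{\hat B_k\}$ acting forward; the contraction mapping constant is bounded by $\mathrm{const} \cdot C\lambda^{n_0} + O(\varepsilon')$, which is $<1$ by the choice of $n_0$ and small $\varepsilon$. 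Symmetrically, I would produce $\tilde E^u_{kn_0}$ as the graph of $\Gamma^u_k : E^u_{kn_0} \to E^s_{kn_0}$, $\|\Gamma^u_k\| \le 1$, using the backward graph transform for $\{\hat B_k^{-1}\}$. Since $\|\Gamma^s_k\|, \|\Gamma^u_k\|$ are both small (in fact $O(\varepsilon')$, not merely $\le 1$, once $\varepsilon$ is small), the two graphs are transverse and $\tilde E^s_{kn_0} \oplus \tilde E^u_{kn_0} = \Bb$ with associated projections $\tilde P_{kn_0}, \tilde Q_{kn_0}$ of norm $\le C_1$ for some $C_1 = C_1(C, \lambda, \lambda_1, R)$; here one uses that the projections onto the graphs of small maps, relative to the original decomposition with projection norms $\le C$, stay bounded. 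I would then fill in the intermediate indices by pushing these subspaces along the cocycle: define $\tilde E^s_j = \Phi_B(j, \lfloor j/n_0\rfloor n_0)\,\tilde E^s_{\lfloor j/n_0\rfloor n_0}$ for $0 \le j - \lfloor j/n_0\rfloor n_0 < n_0$, and dually for $\tilde E^u_j$ using backward iterates from the next block point; condition (CL2*) then holds \emph{with equality between consecutive block points and with inclusion across block points} exactly because the fixed-point property of $\Gamma^s$ gives $\hat B_k \tilde E^s_{kn_0} \subseteq \tilde E^s_{(k+1)n_0}$ (forward invariance of the stable graph) and dually. The exponential estimates (CL3*) follow by the standard cone-field estimate: on the stable graph the blocked map contracts by a factor $\le C\lambda^{n_0} + O(\varepsilon') \le \lambda_1^{n_0}$, so interpolating over the $\le n_0$ intermediate steps (each of norm $\le R$) yields $\|\Phi_B(j+m, j) v^s\| \le C_1 \lambda_1^m \|v^s\|$ with $C_1$ absorbing $R^{n_0}$ and the projection-norm distortion; dually for the unstable estimates. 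This proves (Robust1).

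For (Robust2), when $\mathcal{A}$ is $m$-periodic with $m$-periodic projections and $\mathcal{B}$ is $m$-periodic, I would run the same construction but note that all the fixed-point equations defining $\Gamma^s_k, \Gamma^u_k$ are themselves $m$-periodic (after possibly passing to $\mathrm{lcm}(m, n_0)$ so that block points respect the period — one may simply take $n_0$ to be a multiple of $m$ from the start), hence by uniqueness of the fixed point the graphs, and therefore the projections $\tilde P_k, \tilde Q_k$, are $m$-periodic as well; the rate estimates are identical. The main obstacle I anticipate is \textbf{the bookkeeping around block lengths versus the inclusion (rather than equality) in (CL2*)}: because the original $\mathcal{A}$ only has inclusions $A_k E^s_k \subseteq E^s_{k+1}$ and $A_k^{-1} E^u_k \subseteq E^u_{k-1}$, the stable and unstable subspaces of $\mathcal{A}$ are not uniquely determined by the dynamics and need not be complementary to the invariant cones of $\mathcal{B}$ in any canonical way, so I cannot simply invoke a standard hyperbolic persistence theorem; I must instead carry out the graph transform \emph{using the given $E^s_k, E^u_k$ as the reference splitting at the block points} and check by hand that forward invariance of the constructed stable graph is compatible with the (merely) one-sided inclusion — which it is, since the graph transform only ever pushes the stable graph \emph{forward} into the next block's coordinates and never requires surjectivity of $\hat B_k|_{E^s}$. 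A secondary technical point is controlling the perturbation size $\varepsilon'$ of the blocked cocycle uniformly in $k$, which follows from the uniform bound $R$ and a fixed $n_0$, and controlling the projection norms $C_1$ uniformly, which follows from the uniform bound $C$ on the reference projections together with the uniform smallness of the graph maps $\Gamma^{s,u}_k$.
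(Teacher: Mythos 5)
Your overall strategy---realize the new stable and unstable subspaces as graphs of small linear maps obtained as fixed points of a contraction, verify (CL1*)--(CL3*), and deduce (Robust2) from periodicity of the fixed point---is essentially the paper's strategy. The paper writes $\tilde{E}_k^s=\{v+H_kv : v\in E^s_k\}$, obtains $H_k$ from a contracting operator built out of a Perron-type geometric series applied to the unblocked cocycle, and gets (Robust2) because the fixed point is the limit of iterates of that contraction from $H^0=0$, which preserves $m$-periodicity. Your $n_0$-blocking and intermediate-index filling is a cosmetic reorganization of the same idea (the block argument instead appears only at the end, to prove the rate estimate with exponent $\lambda_1$).

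The genuine gap is in where you let the graph map take values, and the obstacle you flag at the end is misidentified. You take $\Gamma^s_k\colon E^s_{kn_0}\to E^u_{kn_0}$, landing in all of $E^u$. But the fixed-point equation for the stable graph requires inverting the diagonal unstable block $A_k^{uu}=Q_{k+1}A_k|_{E^u_k}$ (or its blocked version), and under (CL2*) this block need not be injective: the hypothesis $A_k^{-1}E^u_{k+1}\subseteq E^u_k$ only says $A_kE^u_k\supseteq E^u_{k+1}$, so there can exist $v\in E^u_k\setminus\{0\}$ with $A_kv\in E^s_{k+1}$, hence $A_k^{uu}v=0$. Example~\ref{ex:CL-not-ED} exhibits exactly this ($A_k^{uu}e_{k+1}=0$), and blocking does not remove the kernel since $Q_{(k+1)n_0}$ still annihilates $\hat A_ke_{kn_0+1}$. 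So the graph-transform operator as you set it up is not well defined. Your remark about ``surjectivity of $\hat B_k|_{E^s}$'' concerns the stable block, which is indeed unproblematic, but the trouble is in the unstable block; the symmetric difficulty reappears, with $s$ and $u$ swapped, when you build the unstable graph. The paper's fix is to shrink the codomain: set $F^u_k:=A_k^{-1}E^u_{k+1}\subseteq E^u_k$, on which $A_k$ restricts to a bijection onto $E^u_{k+1}$ with bounded inverse $Z_k=A_k^{-1}|_{E^u_{k+1}}$, and look for $H_k\colon E^s_k\to F^u_k$. One then checks that the relevant right-hand sides lie in $E^u_{k+1}$ so that $Z_k$ applies, and forward invariance $B_k\tilde{E}_k^s\subseteq\tilde{E}_{k+1}^s$ follows; the unstable case uses $A_{k-1}E^s_{k-1}\subseteq E^s_k$ analogously. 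With this adjustment the rest of your proposal (angle and projection-norm control, the $R^{n_0}$-absorbed constant $C_1$, and the periodicity argument) goes through.
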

\begin{remark}
We expect that for sufficiently small $\varepsilon$, the sequence $\mathcal{B}$ satisfies generalized hyperbolicity for any prescribed $C_1 > C$, $\lambda_1 \in (\lambda, 1)$. However, the proof of Lemma \ref{lem:CL-robust-Ak} presented in this paper does not yield an optimal control of the constant $C_1$.
\end{remark}

\subsection{Example of a sequence of linear operators}\label{sec:cl-not-ed}

We provide an example of a sequence $\mathcal{A}$ that satisfies generalized hyperbolicity, but does not admit an exponential dichotomy. This example generalizes \cite[Example 1.13]{PilBook}, which was introduced in a slightly different context.
\begin{example}\label{ex:CL-not-ED}
    Let $\Bb = \ell^p(\Zz)$, $p \in [1, \infty]$. Consider a sequence of operators $\mathcal{A} = \{A_k: \Bb \to \Bb\}_{k \in \Zz}$ defined as
    $$
(A_k x)_m =
\begin{cases}
\dfrac{1}{2} x_m, & m \leq k, \\
2 x_m, & m > k.
\end{cases}    
    $$
    Then $\mathcal{A}$ satisfies generalized hyperbolicity, but does not admit an exponential dichotomy on $\Zz^+$ or $\Zz^-$.
\end{example}
\begin{proof}
    Consider the projections
    $$
    (P_k x)_m =
\begin{cases}
x_m, & m \leq k, \\
0,   & m > k,
\end{cases}
\quad
    (Q_k x)_m =
\begin{cases}
0, & m \leq k, \\
x_m,   & m > k.
\end{cases}
    $$
A straightforward check shows that $\mathcal{A}$ satisfies generalized hyperbolicity with projections $P_k$, $Q_k$, and $C=1$, $\lambda = 1/2$.


Assume that $\mathcal{A}$ admits an exponential dichotomy on $\Zz^+$ and denote its stable subspace at zero by $\tilde E^s_0$. Then $\tilde E^s_0$ is closed and
$$
\tilde E^s_0=
\left\{v\in\Bb:
\lim_{n\to+\infty}|\Phi_A(n,0)v|=0
\right\}.
$$
Denote by $e_m$ the standard basis vectors in $\ell^p(\Zz)$ and set
$
v^{(N)}=\sum_{m=1}^N2^{-m}e_m.
$
Every $v^{(N)}$ has finite support and
$
|\Phi_A(n,0)v^{(N)}|\to0
$
as $n\to+\infty$. Hence $v^{(N)}\in\tilde E^s_0$. Moreover,
$
v^{(N)}\to v:=\sum_{m=1}^{+\infty}2^{-m}e_m
$
in $\ell^p(\Zz)$ for every $p\in[1,+\infty]$. However, the $n$-th coordinate of $\Phi_A(n,0)v$ is equal to $1$, and therefore
$
|\Phi_A(n,0)v|\geq1
$
for every $n>0$. Thus $v\notin\tilde E^s_0$, contradicting the closedness of $\tilde E^s_0$. Hence $\mathcal{A}$ does not admit an exponential dichotomy on $\Zz^+$. Similarly, $\mathcal{A}$ does not admit an exponential dichotomy on $\Zz^-$.
\end{proof}

\subsection{Bounded solution property}\label{sec:BS}

\begin{proof}[Proof of Lemma \ref{lem:SBS}]
For an interval $I$ of the form \eqref{eq:I} the operator $S_{\mathcal{A}}$ can be defined explicitly: $S_{\mathcal{A}}\{w_{k+1}\}_{\tilde{I}} = \{v_k\}_{k \in I}$,
where $v_k$ is given by Perron sums
\begin{equation}\label{eq:Perron}
v_k = \sum_{i \leq k} D(f^{k-i})(x_i) P_{x_i}w_i -\sum_{i>k}D(f^{k-i})(x_i)Q_{x_i}w_i,   
\end{equation}
with the convention that $w_i = 0$ for $i \notin I$.
By construction the operator $S_{\mathcal{A}}$ is linear.
Due to~(GH1),~(GH3)
$$
|v_k| \leq \sum_{i \leq k} C \lambda^{k-i}C|w_i| + \sum_{i>k} C \lambda^{i-k}C|w_i| \leq C^2 \frac{1+\lambda}{1-\lambda}\|\{w_k\}\|,
$$
and hence $\|S_{\mathcal{A}}\| \leq C^2 \frac{1+\lambda}{1-\lambda}$. Conditions (GH1), (GH2) imply that $\{v_k\}$ satisfies \eqref{eq:Inh} and hence 
$
T_{\mathcal{A}} \circ S_{\mathcal{A}} \{w_{k+1}\}_{k \in \tilde{I}} = \{w_{k+1}\}_{k \in \tilde{I}}.
$
\end{proof}

\begin{proof}[Proof of Lemma \ref{lem:SBS-robust}]
Consider $\varepsilon < \frac{1}{2L}$, operator $T_{\mathcal{A}}$ and its right inverse $S_{\mathcal{A}}$:
$$
T_{\mathcal{A}} \circ S_{\mathcal{A}} = Id, \quad \|S_{\mathcal{A}}\| \leq L.
$$
Let $B_k = A_k + \Delta_k$, then equation
$
v_{k+1} - B_kv_k = w_{k+1}$, for $k \in \tilde{I}
$
is equivalent to
\begin{equation}\label{eq:TB1}
T_{\mathcal{A}}\{v_k\} - \{\Delta_k v_k\} = \{w_{k+1}\}, \quad k \in \tilde{I}.    
\end{equation}
Consider the operator $\Delta: \ell^{\infty}(I, \Bb) \to \ell^{\infty}(\tilde{I}, \Bb)$ defined by
$\Delta \{v_k\}_{k \in I} = \{w'_{k+1} = \Delta_k v_k\}_{k \in \tilde{I}}$. Note that $\| \Delta \| \leq \varepsilon$. 
Then any solution of the following equation is also a solution of \eqref{eq:TB1}
\begin{equation}\label{eq:TB2}
\{v_k\} - S_{\mathcal{A}}\Delta \{v_k\} = S_{\mathcal{A}}\{w_{k+1}\}.
\end{equation}
Note that 
\begin{equation}\label{eq:TB3}
\|S_{\mathcal{A}}\Delta\|\leq L\varepsilon < \frac{1}{2}.
\end{equation}
Then the sequence
$$
\{v_k\} = (Id - S_{\mathcal{A}}\Delta)^{-1}S_{\mathcal{A}}\{w_{k+1}\}, \quad \mbox{where} \quad (Id - S_{\mathcal{A}}\Delta)^{-1} = Id +\sum_{l \geq 1} (S_{\mathcal{A}}\Delta)^l
$$
is a solution of \eqref{eq:TB2}. Due to \eqref{eq:TB3} the series converges and
$$
\|\{v_k\}\| \leq \left(1+\frac{\varepsilon L}{1-\varepsilon L}\right)L \|\{w_k\}\| \leq 2L.
$$
\end{proof}

\begin{proof}[Proof of Lemma \ref{lem:Bk-Inf}]
We use the following statement for linear isomorphisms of Banach spaces.
\begin{thm}[{\cite[Theorem 1]{NilsonPeris}}]\label{thm:Bernandes-shadowing}
    For any invertible continuous linear operator $T$ on any Banach space $X$, if $T$ has the finite Lipschitz shadowing property with constant $L$, then $T$ has the Lipschitz shadowing property with constant $2L$.
\end{thm}
\begin{remark}
    The statement of Theorem 1 in \cite{NilsonPeris} gives only the shadowing property without specifying the constant. However, the proof can be repeated to establish the constant $2L$. For completeness of exposition we provide the proof tracing the constant in Appendix A.
\end{remark}

Consider the Banach space$
\Xop =\ell^{\infty}(\Zz, \Bb)=\bigl\{\{v_n\}_{n\in\Zz}: v_n\in \Bb \bigr\}
$
and a linear operator $\Aop \colon \Xop \to \Xop$ defined by
$$
\Aop\{v_n\}=\{v_n'\}, \qquad \text{where }\; v_{n+1}'=A_n v_n.
$$

Let us prove that under the assumptions of Lemma \ref{lem:Bk-Inf}, the operator $\Aop$ has the finite shadowing property. 

Consider a finite interval $[k_1, k_2]$ and sequences $\xi^k\in \Xop$, $k \in [k_1, k_2]$, with $|\xi^k|\leq1$; denote by $\xi_n^k \in \Bb$ ($k\in[k_1,k_2]$, $n\in\Zz$) their elements.
Below we show that there exists a sequence $\nu^k \in \Xop$, $k\in[k_1,k_2]$ with $|\nu^k| \leq L$ such that
\begin{equation}\label{eq:nu}
  \nu^{k+1}=\Aop\nu^k+\xi^k,\quad k \in [k_1, k_2-1].
\end{equation}
Denote the elements of $\nu^k$ by $\nu^k_n \in \Bb$, i.e. $\nu^k = \{\nu^k_n\}$.
Equations \eqref{eq:nu} are equivalent to
\begin{equation}\label{eq:1-inf}
  \nu_{n+1}^{k+1}=A_n \nu_n^k+\xi_{n+1}^k, \qquad n \in \Zz,\; k \in [k_1,k_2-1].
\end{equation}

The idea is to read the doubly-indexed system \eqref{eq:1-inf} along the diagonals $k-n=j$: on each of them it becomes a finite inhomogeneous difference equation in the single index $n$ for the operators $\{A_n\}$, to which the finite bounded solution property applies.
Denote $j := k-n$ and consider the sequences
\begin{equation}\label{eq:1.5}
  \mu_n^j := \nu_n^{j+n},\qquad \theta_n^j := \xi_n^{j+n-1}.
\end{equation}

Then \eqref{eq:1-inf} is equivalent to
\begin{equation}\label{eq:2}
  \mu_{n+1}^j = A_n\mu_n^j+\theta_{n+1}^j,\qquad n\in\Zz,\; n+j\in[k_1,k_2-1].
\end{equation}
Rewriting indexes we have
\begin{equation}\label{eq:2'}
  \mu_{n+1}^j = A_n\mu_n^j+\theta_{n+1}^j,\qquad j\in\Zz,\; n\in[k_1-j,\,k_2-j-1].
\end{equation}

For any $j\in\Zz$, according to \eqref{eq:Inh} there exists a sequence $\mu^j_n$ satisfying \eqref{eq:2'}, and $|\mu_n^j| \le L$ for $n\in[k_1-j,k_2-j]$.
Hence, for all $j\in\Zz$ and $n+j\in[k_1,k_2-1]$, \eqref{eq:2} holds, which means that $\nu_n^k$ defined by \eqref{eq:1.5} satisfies \eqref{eq:1-inf}. Hence, the operator $\Aop$ has the finite Lipschitz  shadowing property with constant $L$. According to Theorem \ref{thm:Bernandes-shadowing}, this implies that $\Aop$ has the Lipschitz shadowing property with constant $2L$.

Let us now show that the sequence $\mathcal{A}$ satisfies the bounded solution property for $I = \Zz$. Indeed, fix $\{w_k \in \Bb\}$ with $|w_k| \leq 1$. Consider the sequence $\xi^k \in \Xop$ defined as 
$$
\xi^k = \{ \dots, w_{k+1}, w_{k+1}, w_{k+1}, \dots\}, \; \xi^k_n = w_{k+1}, \quad n\in \Zz.
$$
Note that $|\xi^k| = |w_{k+1}| \leq 1$. Since $\Aop$ has the shadowing property, there exists a sequence $\nu^k \in \Xop$ with $|\nu^k| \leq 2L$ such that
\begin{equation}\label{eq:nu-inf}
  \nu^{k+1}=\Aop\nu^k+\xi^k,\quad k \in \Zz.
\end{equation}
Denote $\nu^k = \{\nu^k_n\}$, then $|\nu^k_n| \leq 2L$. According to \eqref{eq:nu-inf}, the sequence  $v_k = \nu^k_k$ satisfies \eqref{eq:Inh} for $I = \Zz$ and hence $\mathcal{A}$ satisfies the bounded solution property with constant $2L$ for $I = \Zz$.
\end{proof}

\begin{proof}[Proof of Lemma \ref{lem:SBS-periodic}]

For $i\in\Zz$, let $r(i)\in\{0,\ldots,m-1\}$ be its residue modulo $m$.
Periodicity gives
$
\Phi_A(k+m,\ell+m)=\Phi_A(k,\ell)
$
and, writing $i=r(i)+qm$, we have
\begin{equation}\label{eq:periodic-shift}
\Phi_A(k,i)=\Phi_A(k-i+r(i),r(i)).
\end{equation}
Consider the sequence
\begin{equation}\label{eq:green-series}
v_k:=
\sum_{i \leq k}\Phi_A(k,i)P_{r(i)}w_{i}
-\sum_{ i > k}\Phi_A(k,i)Q_{r(i)}w_{i}, 
\quad k \in \Zz.
\end{equation}
Although the splitting need not be periodic, \eqref{eq:periodic-shift} allows us to use the projections at the representative indices $r(i)$.
By \eqref{eq:periodic-shift} and {\rm(GH1*)}, {\rm(GH3*)},
\[
\|\Phi_A(k,i)P_{r(i)}\|
\leq C^2\lambda^{k-i},
\qquad i\leq k,
\]
\[
\|\Phi_A(k,i)Q_{r(i)}\|
\leq C^2\lambda^{i-k},
\qquad i>k.
\]
Hence the series in \eqref{eq:green-series} converge.  
Summing the geometric tail yields $|v_k| \leq L = C^{2}\frac{1+\lambda}{1-\lambda}$.
Changing the summation index by $m$ and using
\[
\Phi_A(k+m,i+m)=\Phi_A(k,i),\qquad
r(i+m)=r(i),\qquad w_{i+m}=w_i,
\]
gives $v_{k+m}=v_k$. Moreover, using
$
A_k\Phi_A(k,i)=\Phi_A(k+1,i)
$
and $P_{r(k+1)}+Q_{r(k+1)}=Id$, direct cancellation in
\eqref{eq:green-series} gives
$
v_{k+1}=A_kv_k+w_{k+1}.
$
\end{proof}

\subsection{Robustness for sequences of operators}\label{sec:robust-Ak}

\begin{proof}[Proof of Lemma \ref{lem:CL-robust-Ak}]
    Let $B_k = A_k + \Delta_k$. Denote 
    $$
    A_k = \begin{bmatrix}
A_k^{ss} & A_k^{su} \\
A_k^{us} & A_k^{uu}
\end{bmatrix}
, \quad 
    \Delta_k = \begin{bmatrix}
\Delta_k^{ss} & \Delta_k^{su} \\
\Delta_k^{us} & \Delta_k^{uu}
\end{bmatrix},
    $$
where 
\begin{align*}
A_k^{ss}: E_k^s \to E_{k+1}^s, & \quad  A_k^{ss} = P_{k+1}A_k;& \qquad A_k^{su}: E_k^u \to E_{k+1}^s, & \quad  A_k^{su} = P_{k+1}A_k;\\
A_k^{us}: E_k^s \to E_{k+1}^u, & \quad  A_k^{us} = Q_{k+1}A_k;& \qquad
A_k^{uu}: E_k^u \to E_{k+1}^u, & \quad  A_k^{uu} = Q_{k+1}A_k,
\end{align*}
The operators $\Delta_k^{ss}$, $\Delta_k^{su}$, $\Delta_k^{us}$, and $\Delta_k^{uu}$ are defined in the same way, replacing $A_k$ by $\Delta_k$.

By (GH2*) we have $A_k^{us}=0$, and therefore
    $$
    A_k = \begin{bmatrix}
A_k^{ss} & A_k^{su} \\
0 & A_k^{uu}
\end{bmatrix}
$$

Denote 
$
F_k^u = A_k^{-1} E_{k+1}^u.
$
By (GH2*) we have $F_k^u \subseteq E_k^u$, and the map $A_k:F_k^u \to E_{k+1}^u$ is a bijection. Therefore, it has an inverse; denote it by $Z_k: E_{k+1}^u \to F_k^u$,
$$
Z_kA_k^{uu} = Id|_{F_k^u}, \quad A_k^{uu}Z_k = Id|_{E_{k+1}^u}, \quad Z_k = A_k^{-1}|_{E^u_{k+1}}.
$$
Let us first prove item \textit{(Robust1)}.
Denote $L' = 1+ C^2 \frac{\lambda^2}{1-\lambda^2}$. Fix  $\varepsilon<1/(2R)$ sufficiently small so that, for
\begin{equation}\label{eq:defep1ep2}
\varepsilon_1 = C\varepsilon, \quad \varepsilon_2 = 2L'R\varepsilon_1
\end{equation}
the following inequality holds
\begin{equation}\label{eq:condep1ep2}
    R(2\varepsilon_1 + 2(R+\varepsilon_1)\varepsilon_2 + (R+\varepsilon_1)2\varepsilon_2) \leq \frac{1}{2L'}.
\end{equation}

Since
$
\|A_k^{-1}(B_k-A_k)\|<\frac{1}{2},
$
each $B_k$ is an isomorphism and
$
\|B_k^{-1}\|\leq 2R.
$

In the following we show that there exists a sequence of linear maps
    $H_k: E_k^s \to F_k^u$, $\|H_k\| \leq \varepsilon_2$,
such that for linear spaces
\begin{equation}\label{eq:tildeEDef}
\tilde{E}_k^s = \{v_k^s+H_k v_k^s: \; v_k^s \in E_k^s\}    
\end{equation}
holds the inclusion
\begin{equation} \label{eq:ETildaIncl}
B_k \tilde{E}_k^s \subseteq \tilde{E}_{k+1}^s.    
\end{equation}
\begin{remark}
    In the standard robustness proof for exponential dichotomy, the latter inclusion is replaced by an equality, and the operators $H_k$ are assumed to act on $E_k^u$. These differences lead to the corresponding changes in the arguments.
\end{remark}
Consider the vectors $v_k^s \in E_k^s$ and $\tilde{v}_k^s = v_k^s + H_k v_k^s \in \tilde{E}_k^s$. The following holds
\begin{multline*}
    B_k \tilde{v}_k^s = \left(\begin{bmatrix}
A_k^{ss} & A_k^{su} \\
0 & A_k^{uu}
\end{bmatrix} 
+ \begin{bmatrix}
\Delta_k^{ss} & \Delta_k^{su} \\
\Delta_k^{us} & \Delta_k^{uu}
\end{bmatrix} 
\right)
\begin{bmatrix}
v_k^s  \\
H_k v_k^s  
\end{bmatrix} = 
\begin{bmatrix}
A_k^{ss}v_k^s + A_k^{su}H_kv_k^s + \Delta_{k}^{ss}v_k^s + \Delta_k^{su} H_k v_k^s  \\
A_k^{uu}H_kv_k^s + \Delta_k^{us}v_k^s + \Delta_k^{uu}H_k v_k^s
\end{bmatrix}.
\end{multline*}

Inclusion $B_k \tilde{v}_k^s \in \tilde{E}_{k+1}^s$ is equivalent to equality 
$$
(H_{k+1}A_k^{ss} + H_{k+1}A_k^{su}H_k + H_{k+1}\Delta_k^{ss} + H_{k+1}\Delta_k^{su}H_k)v_k^s = 
(A_k^{uu}H_k + \Delta_k^{us} + \Delta_k^{uu}H_k)v_k^s.
$$
\begin{statement}
    If a sequence of operators $H_k: E_k^s \to F_k^u$ satisfies
\begin{equation}\label{eq:StHk}
        A_k^{uu}H_k = H_{k+1}A_k^{ss} - \Delta_k^{us} - \Delta_k^{uu}H_k + H_{k+1}A_k^{su}H_k + H_{k+1}\Delta_k^{ss} + H_{k+1}\Delta_k^{su}H_k
\end{equation}
    then inclusions \eqref{eq:ETildaIncl} hold.
\end{statement}
Consider the following auxiliary problem. Given a sequence of bounded operators $S_k: E_k^s \to F_k^u$, we seek a bounded sequence of operators $H_k: E_k^s \to F_k^u$ that satisfies
\begin{equation}\label{eq:HkSk}
H_k = Z_kH_{k+1}A_k^{ss} + S_k.
\end{equation}
Let $X$ be the Banach space of such operator sequences $\{S_k\}_{k \in \Zz}$ endowed with the norm
$\|\{S_k\}\| = \sup_k \|S_k\|$. Consider the operator $F:X\to X$ defined by
\begin{equation}\label{eq:defFHk}
F(\{S_k\}) = \{H_k\}, \quad \mbox{where} \quad H_k = S_k + \sum_{l = 1}^{+\infty}Z_k\cdot \ldots \cdot Z_{k+(l-1)}S_{k+l}A_{k+(l-1)}^{ss} \cdot \ldots \cdot A_k^{ss}.    
\end{equation}
Due to condition (GH3*) for $l \geq 0$, the following inequalities hold
$$
    \|Z_k\cdot \ldots \cdot Z_{k+(l-1)}\|   \leq C \lambda^l, \quad
    \|A_{k+(l-1)}^{ss}\cdot \ldots \cdot A_k^{ss}\|   \leq C \lambda^l.
$$
Hence, the right-hand side of \eqref{eq:defFHk} converges and
\begin{equation}\label{eq:FL'}
\|F(\{S_k\})\| \leq \|\{S_k\}\| \left(1+ \sum_{l=1}^{\infty} C^2\lambda^{2l} \right) = \|\{S_k\}\|\left(1+ C^2 \frac{\lambda^2}{1-\lambda^2}\right) = L' \|\{S_k\}\|.    
\end{equation}
A straightforward substitution shows that $H_k$, $S_k$ satisfy \eqref{eq:HkSk}.

Consider the nonlinear operator
$Q:X \to X$ defined as
$Q(\{H_k\}) = \{S_k\}$, where
$$
S_k = Z_k(- \Delta_k^{us} - \Delta_k^{uu}H_k + H_{k+1}A_k^{su}H_k + H_{k+1}\Delta_k^{ss} + H_{k+1}\Delta_k^{su}H_k).
$$
Note that if $\{H_k\} \in X$ is a fixed point of $F \circ Q$, i.e. $\{H_k\} = F \circ Q (\{H_k\})$, then $\{H_k\}$ satisfies~\eqref{eq:StHk}.

Let $\varepsilon_2$ satisfy the conditions \eqref{eq:condep1ep2}. Let $U$ be the closed ball in $X$ of radius $\varepsilon_2$ centered at $0$. In the following, we show that $F \circ Q : U \to U$ and $F \circ Q|_{U}$ is a contraction.
Consider $H = \{H_k\}, H' = \{H'_k\} \in U$, and assume that
$
\|H_k\|, \|H'_k\| \leq \varepsilon_2$, $\|H-H'\| = \delta
$.
Let us estimate $Q\{H_k\} - Q\{H'_k\} = \{S_k - S'_k\}$. The following holds
\begin{multline*}
    \|S_k - S'_k\| \leq 
    \|Z_k\| \cdot \|-\Delta_k^{uu}(H_k-H_k') + H_{k+1}A_k^{su}H_k -H'_{k+1}A_k^{su}H'_k + \\ (H_{k+1}-H'_{k+1})\Delta_k^{ss} + H_{k+1}\Delta_k^{su}H_k - H'_{k+1}\Delta_k^{su}H'_k\| \leq \\
    R(2\varepsilon_1 \delta + (H_{k+1}-H'_{k+1})(A_k^{su}+\Delta_k^{su})H_k +  H_{k+1}(A_k^{su}+\Delta_k^{su})(H_k-H'_k) \\ - (H_{k+1}-H'_{k+1})(A_k^{su}+\Delta_k^{su})(H_k-H'_k)) \leq \\
    R(2 \varepsilon_1 \delta + 2\delta(R+\varepsilon_1)\varepsilon_2 + (R+\varepsilon_1)\delta^2 ) = R(2\varepsilon_1 + 2(R+\varepsilon_1)\varepsilon_2 + (R+\varepsilon_1)\delta)\delta.
\end{multline*}
Taking into account \eqref{eq:condep1ep2} and $\delta \leq 2\varepsilon_2$ we conclude that
$
\|QH - QH'\| \leq \frac{1}{2L'}\|H-H'\|.
$
Since $F$ is linear, inequality \eqref{eq:FL'} implies that 
\begin{equation}\label{eq:FQcontr}
\|F \circ QH - F \circ QH'\| \leq L'\frac{1}{2L'}\|H-H'\| = \frac{1}{2}\|H-H'\|,    
\end{equation}
and hence $F \circ Q$ is contracting on $U$. 
Note that 
$\|Q\{H_k = 0\}\| = \|\{-Z_k\Delta_k^{us}\}\| \leq R\varepsilon_1$, hence
$
\|F \circ Q\{H_k = 0\}\| \leq L'R\varepsilon_1
$
and according to \eqref{eq:FQcontr} and \eqref{eq:defep1ep2} for any $H \in U$ we have
$$
\|F \circ Q H\| \leq
\|F\circ QH-F\circ Q\{0\}\|+\|F\circ Q\{0\}\| \leq \varepsilon_2/2 + L'R\varepsilon_1 \leq \varepsilon_2
$$
and hence $F \circ Q(U) \subset U$. By the contraction mapping principle, there exists a unique $H = \{H_k\}$, with $|H_k| \leq \varepsilon_2$ such that $F \circ Q H = H$. Consequently, the subspaces $\tilde{E}_k^s$ defined by \eqref{eq:tildeEDef} satisfy \eqref{eq:ETildaIncl}.

Let us show that for small enough $\varepsilon, \varepsilon_2$ the spaces $\tilde{E}_k^s$ satisfy 
\begin{equation}\label{eq:GH3Bs}
    |B_{k+n-1} \cdot \ldots \cdot B_k\tilde{v}^s_k| \leq C_1 \lambda_1^n |\tilde{v}^s_k|, \quad \tilde{v}^s_k \in \tilde{E}^s_k, n  \geq 0    
\end{equation}
for some $C_1 = C_1(C, \lambda, \lambda_1, R)$. Indeed, let us choose $N$ such that $2C\lambda^N \leq \lambda_1^N$ and choose $C_1 = \left(\frac{R+1}{\lambda_1}\right)^N$. For small enough $\varepsilon, \varepsilon_2$ we have
$$
\|B_{k+N-1} \cdot \ldots \cdot B_k|_{\tilde{E}_k^s}\| \leq 2 \|A_{k+N-1} \cdot \ldots \cdot A_k|_{{E}_k^s}\| \leq 2C\lambda^N \leq \lambda_1^N, \quad k \in \Zz.
$$
Represent $n > 0$ in the form $n = lN + r$, with $l \geq 0$, $r \in [0, N-1]$.
Note that
\begin{multline}
\|B_{k+n-1} \cdot \ldots \cdot B_k|_{\tilde{E}_k^s}\| \leq \\
\left(\prod_{i = 0}^{l-1}\|B_{k+(i+1)N-1} \cdot \ldots \cdot B_{k+iN}|_{\tilde{E}_{k+iN}^s}\|\right)\|B_{k+lN+r-1} \cdot \ldots \cdot B_{k+lN}|_{\tilde{E}_{k+lN}^s}\| \leq \\ (\lambda_1^N)^l(R+1)^r\leq C_1\lambda_1^n, \label{eq:B-proof}
\end{multline}
which implies \eqref{eq:GH3Bs}.

Similarly, for small enough $\varepsilon$ we construct a sequence of operators $H_k^u:E_k^u \to A_{k-1}E_{k-1}^s$ and spaces $\tilde{E}_k^u = \{ v_k^u + H_k^uv_k^u : v_k^u \in E_k^u\}$ such that 
$
B_k^{-1} \tilde{E}_{k+1}^u \subseteq \tilde{E}_{k}^u
$
and
\begin{equation}\label{eq:GH3Bs-inv}
    |B_{k-n}^{-1} \cdot \ldots \cdot B_{k-1}^{-1}\tilde{v}^u_k| \leq C_1 \lambda_1^n |\tilde{v}^u_k|, \quad \tilde{v}^u_k \in \tilde{E}^u_k, n  \geq 0.
\end{equation}
For small enough $\varepsilon_2$ the inequalities $\|H_k\|, \|H_k^u\| \leq \varepsilon_2$ imply that the operator
\begin{equation}\notag
T_k = 
\begin{bmatrix}
Id & H_k^u \\
H_k & Id 
\end{bmatrix}   
\end{equation}
is invertible, and projections
$
P_{\tilde{E}_{k}^{s, u}} = T_k\circ P_{{E}_k^{s, u}} \circ T^{-1}_{k} 
$
satisfy
$\|P_{\tilde{E}_{k}^{s, u}}\| \leq 2C$ and $\tilde{E}_k^s \oplus \tilde{E}_k^u = \Bb$.

Hence, the sequence $\mathcal{B}$ satisfies generalized hyperbolicity with spaces $\tilde{E}_k^s, \tilde{E}_k^u$ and constants $C_1, \lambda_1$. Item \textit{(Robust1)} is proved.

To prove item \textit{(Robust2)}, we consider an $m$-periodic sequence of operators $\mathcal{A} = \{A_k\}$ that satisfies periodic generalized hyperbolicity, with the corresponding periodic splitting $\Bb = E_k^s \oplus E_k^u$. Let $\mathcal{B} = \{B_k\}$ be an $m$-periodic perturbation of $\mathcal{A}$ satisfying \eqref{eq:AkBk}. We show that the subspaces $\tilde{E}_k^s$, $\tilde{E}_k^u$ constructed in the proof of  \textit{(Robust1)}  are $m$-periodic. 

The sequence of operators $H = \{H_k\} \in X$ satisfying $(F \circ Q)H = H$ can be obtained as a limit of iterates $H^l = (F \circ Q)^l\{H_k = 0\} \in X$:
$
H = \lim_{l \to \infty}H^l.
$
If sequences  $\mathcal{A}$, $\mathcal{B}$ and projections $P_k$, $Q_k$ are $m$-periodic, then the operator $(F \circ Q)$ maps $m$-periodic sequences to $m$-periodic sequences. Hence, for any $l>0$ the iterate $H^l$ is $m$-periodic, and so is its limit $H$. Therefore, the subspaces $\tilde{E}_k^s$ constructed in the proof of item \textit{(Robust1)} are $m$-periodic.  Similar considerations show that $\tilde{E}_k^u$ is $m$-periodic as well. Consequently, $\mathcal{B}$ satisfies periodic generalized hyperbolicity. This proves item \textit{(Robust2)}.
\end{proof}

\section{Shadowing, density of periodic points, and robustness}\label{sec:proofsSh}

\subsection{Finite shadowing} \label{sec:proofFinLipSh}

\begin{lem}\label{lem:FinLipSh}
If $f\in\DUC(\Bb)$ satisfies generalized hyperbolicity, then $f$ satisfies the finite Lipschitz shadowing property (FinLipSh).
\end{lem}
\begin{proof}
Let $M = 2L$. Fix $d_0 > 0$, satisfying the following inequalities 
\begin{equation}\label{eq:M2'}
    L + 2(RM+R)(RM+R+L)\cdot r\left((RM+R+L)d_0\right) \leq M, 
\end{equation}
\begin{equation}\label{eq:M2''}
 2 (RM+R+L) \cdot r\left((RM+R+L)d_0\right) < 1.
\end{equation}
We first prove the statement for intervals of the form $[-N,N]$.
\begin{lem}\label{lem:NLipSh}
    For any $d < d_0$ and any $d$-pseudotrajectory $\{y_k\}_{k \in [-N, N]}$ there exists an exact trajectory $\{x_k\}_{k \in [-N, N]}$ such that
    $
    |y_k - x_k| \leq Md
    $.
\end{lem}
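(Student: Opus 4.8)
The plan is to prove Lemma~\ref{lem:NLipSh} by induction on $N$, with the constants $M=2L$ ($L$ the constant of Lemma~\ref{lem:SBS}) and $d_0$ (fixed by \eqref{eq:M2'}, \eqref{eq:M2''}) taken once and for all, independent of $N$. For $N=0$ the claim is vacuous: the one-point sequence $\{y_0\}$ is already an exact trajectory, and $x_0:=y_0$ satisfies $|y_0-x_0|=0\le Md$. Assume the statement for $N$; fix $d<d_0$ and a $d$-pseudotrajectory $\{y_k\}_{k\in[-(N+1),N+1]}$.

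The first step is to produce a genuine orbit of $f$ that stays close to $\{y_k\}$ on all of $[-(N+1),N+1]$ and whose differential we can handle. The restriction $\{y_k\}_{k\in[-N,N]}$ is a $d$-pseudotrajectory, so the inductive hypothesis gives an exact trajectory $\{x_k^{*}\}_{k\in[-N,N]}$ with $|x_k^{*}-y_k|\le Md$. Extend it to $[-(N+1),N+1]$ by setting $x_{N+1}^{*}:=f(x_N^{*})$ and $x_{-(N+1)}^{*}:=f^{-1}(x_{-N}^{*})$; this is still an exact trajectory, and applying $f^{\pm1}$ once — using the bounds \eqref{eq:C1-2}, the pseudotrajectory estimates \eqref{eq:pst1}, \eqref{eq:pst2}, and $R\ge1$ — yields $|x_k^{*}-y_k|\le (RM+R)d$ for every $k\in[-(N+1),N+1]$. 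By Lemma~\ref{lem:SBS}, the sequence $\{A_k:=Df(x_k^{*})\}_{k\in[-(N+1),N+1]}$ then has the strong bounded solution property on this interval, with a linear right inverse $S$, $\|S\|\le L$; and by \eqref{eq:C1-r1}, $\|Df(y_k)-A_k\|\le r(|x_k^{*}-y_k|)\le r((RM+R)d)$, which is small once $d<d_0$, so by Lemma~\ref{lem:SBS-robust} the differentials $\{Df(y_k)\}$ along the pseudotrajectory also have the (strong) bounded solution property on $[-(N+1),N+1]$.

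The second step constructs the shadowing orbit by a fixed-point argument — equivalently, by successively correcting $\{y_k\}$ to pseudotrajectories with geometrically decaying defect. Writing $x_k=y_k+v_k$, the condition $x_{k+1}=f(x_k)$ is equivalent to
$$v_{k+1}-A_kv_k \;=\; s_f(y_k,v_k)+\bigl(Df(y_k)-A_k\bigr)v_k+\bigl(f(y_k)-y_{k+1}\bigr)\;=:\;G(v)_{k+1}$$
for the relevant indices $k$, and hence, applying $S$, to the equation $v=S\,G(v)$ in $\ell^{\infty}([-(N+1),N+1],\Bb)$. Since $s_f(y_k,0)=0$ and $|f(y_k)-y_{k+1}|\le d$ we have $\|G(0)\|_{\infty}\le d$, so $\|S\,G(0)\|\le Ld$; and on the ball $\{\|v\|_{\infty}\le Md\}$ the map $v\mapsto S\,G(v)$ is Lipschitz with constant at most $\|S\|\bigl(r(Md)+r((RM+R)d)\bigr)$, by \eqref{eq:s-Lip} together with the mean-value bound $|s_f(x,v_1)-s_f(x,v_2)|\le r(\max(|v_1|,|v_2|))\,|v_1-v_2|$ from \eqref{eq:C1-r1}. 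The inequalities \eqref{eq:M2'}, \eqref{eq:M2''} are exactly what is needed for $v\mapsto S\,G(v)$ to be a self-map of $\{\|v\|_{\infty}\le Md\}$ and a contraction there; its unique fixed point $\{v_k\}$ gives an exact trajectory $x_k=y_k+v_k$ on $[-(N+1),N+1]$ with $|x_k-y_k|=|v_k|\le Md$, completing the induction.

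The main obstacle is one of bookkeeping rather than of substance: the generalized $(C,\lambda)$-structure is attached to true orbits of $f$, so Lemma~\ref{lem:SBS} cannot be applied to $\{Df(y_k)\}$ directly; extending the orbit supplied by the inductive hypothesis and then perturbing is what repairs this, but it forces one to keep track of the three scales $Md$, $(RM+R)d$ (the distance from the extended orbit to the pseudotrajectory) and the larger $(RM+R+L)d_0$ (the scale on which the nonlinear remainder $s_f$ and the difference of differentials must be estimated), which is precisely what the somewhat delicate conditions \eqref{eq:M2'}, \eqref{eq:M2''} encode. Note that the discontinuity of the splitting $E_x^{s}\oplus E_x^{u}$, the real difficulty in the rest of the paper, is harmless at this stage, because the right inverse $S$ of Lemma~\ref{lem:SBS} comes from the Perron sums \eqref{eq:Perron}, which only ever evaluate the projections at the points of the orbit $\{x_k^{*}\}$; and that the constructed trajectory depends on the choice of $\{x_k^{*}\}$ and is not claimed to be unique.
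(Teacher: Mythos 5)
Your proof keeps the paper's outer skeleton — induction on $N$, one-step extension of the orbit supplied by the inductive hypothesis so that it covers $[-(N+1),N+1]$ at distance $(RM+R)d$, then invocation of Lemma~\ref{lem:SBS} for $\{A_k=Df(x_k^*)\}$ — but closes the induction step by a genuinely different mechanism. The paper performs a single Newton-type correction: it sets $z_k=y_k+dv_k$ with $\{v_k\}$ obtained from the Perron sums, observes that $\{z_k\}$ is a pseudotrajectory with much smaller defect, and then applies the crude $(RM+R)d$-shadowing (Statement~\ref{stat:dn+1}) to $\{z_k\}$; the triangle inequality together with \eqref{eq:M2'} and \eqref{eq:M2''} then yields the bound $Md$. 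You instead write the shadowing equation as $T_{\mathcal{A}}v=G(v)$ on the interval, feed the right inverse $S$ through, and solve $v=S\,G(v)$ by the contraction mapping principle on the ball $\{\|v\|_\infty\le Md\}$ — the standard Palmer-style argument. Both routes are correct with the paper's constants; yours is arguably tidier since the one-step correction plus crude fallback is replaced by a single fixed-point iteration. A few small inaccuracies: (i) the detour through Lemma~\ref{lem:SBS-robust} is superfluous — your fixed-point equation uses $S$, the right inverse of $T_{\mathcal{A}}$ for $A_k=Df(x_k^*)$, never a right inverse for $\{Df(y_k)\}$, so that clause can be dropped; (ii) ``is equivalent to \dots the equation $v=S\,G(v)$'' overstates it, since $S$ is only a right inverse of $T_{\mathcal{A}}$ — any solution of $v=S\,G(v)$ solves $T_{\mathcal{A}}v=G(v)$ but not conversely, which is all you need; (iii) only \eqref{eq:M2''} is actually used for both the self-map and the contraction estimates (both reduce, using monotonicity of $r$ and $RM+R+L\ge 2L$, to $4L\,r\bigl((RM+R+L)d_0\bigr)\le 1$, which follows from \eqref{eq:M2''}), whereas \eqref{eq:M2'} is specifically tailored to the paper's two-step route and plays no role in yours.
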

\begin{proof}
We prove the claim by induction on $N$. For $N=0$, the statement is immediate.

Consider a $d$-pseudotrajectory $\{y_{k \in [-(N+1), N+1]}\}$ with $d<d_0$. 
By the induction hypothesis, the $d$-pseudotrajectory $\{y_{k \in [-N, N]}\}$ is $Md$-shadowed by an exact trajectory $\{p_{k \in [-N, N]}\}$
\begin{equation}\label{Add4.1}
|y_k - p_k| \leq M d, \quad k \in [-N, N].
\end{equation}
Note that $p_{N+1} = f(p_N)$, $p_{-(N+1)} = f^{-1}(p_{-N})$. Due to \eqref{eq:C1-2}, \eqref{eq:pst1}, \eqref{eq:pst2} the following estimates hold
\begin{equation}\label{Add4.2}
|y_{N+1} - p_{N+1}| \leq (RM+1)d, \quad |y_{-(N+1)} - p_{-(N+1)}| \leq (RM+R)d.
\end{equation}
Hence, we conclude the following.
\begin{statement}\label{stat:dn+1}
For  $d< d_0$ a $d$-pseudotrajectory $\{y_{k \in [-(N+1), N+1]}\}$ can be $(RM+R)d$-shadowed.   
\end{statement}

In the following, we show how to construct a better shadowing trajectory.
Denote $A_k = Df(p_k)$. Consider the sequence $\{w_k \in \Bb\}$ defined by
$$
y_{k+1} - f(y_k) = - d w_{k+1}, \quad k \in [-(N+1), N], 
$$
$$
w_{k+1} = 0, \quad k \notin [-(N+1), N].
$$
Note that $|w_k| \leq 1$.
By Lemma \ref{lem:SBS} there exists a sequence $\{v_k \in \Bb\}_{k \in \Zz}$, $|v_k| \leq L$ such that equalities \eqref{eq:Inh} hold.
Consider the sequence $z_k = y_k + dv_k$. Note that 
$$
f(z_k) = f(y_k + dv_k) = f(p_k + (y_k-p_k) +dv_k), \quad k \in [-(N+1), N+1].
$$
Inequalities \eqref{eq:C1-1}, \eqref{eq:C1-2}, \eqref{Add4.1}, \eqref{Add4.2} imply that
\begin{multline*}
|f(z_k) - (f(p_k) + A_k (y_k - p_k + dv_k) )| \leq |y_k - p_k + dv_k|r(|y_k - p_k + dv_k|) \leq \\ (RM+R+L)d \cdot r\left((RM+R+L)d\right).    
\end{multline*}
Then for $k \in [-(N+1), N]$ the following inequalities hold
\begin{multline*}
|z_{k+1} - f(z_k)| = |y_{k+1} + dv_{k+1} - p_{k+1} - \\ A_k (y_k - p_k + dv_k) - (f(z_k) - (f(p_k) + A_k (y_k - p_k + dv_k) ))|  \leq \\  
|y_{k+1}  - p_{k+1} + dw_{k+1} - A_k (y_k - p_k)| +|f(z_k) - (f(p_k) + A_k (y_k - p_k + dv_k) )| \leq \\  
|f(y_k) - f(p_k) - A_k (y_k - p_k)| + (RM+R+L)d \cdot r\left((RM+R+L)d\right) \leq \\
|y_k - p_k|r(|y_k - p_k|) + (RM+R+L)d \cdot r\left((RM+R+L)d\right) \leq \\ 
(RM+R)d\cdot r((RM+R)d) + (RM+R+L)d \cdot r\left((RM+R+L)d\right) \leq \\
2 (RM+R+L)d \cdot r\left((RM+R+L)d\right).
\end{multline*}
Hence $\{z_{k \in [-(N+1), N+1]}\}$ is a $2 (RM+R+L)d \cdot r\left((RM+R+L)d\right)$-pseudotrajectory. Inequality~\eqref{eq:M2''} implies 
$
2 (RM+R+L)d \cdot r\left((RM+R+L)d\right) < d_0
$.
According to Proposition \ref{stat:dn+1} the pseudotrajectory $\{z_{k \in [-(N+1), N+1]}\}$ can be $2(RM+R)(RM+R+L)d \cdot r\left((RM+R+L)d\right)$-shadowed by an exact trajectory $q_{k \in [-(N+1), N+1]}$. Then by \eqref{eq:M2'}
\begin{multline*}
|y_k - q_k| \leq |y_k - z_k| + |z_k-q_k| \leq \\ Ld + 2(RM+R)(RM+R+L)d \cdot r\left((RM+R+L)d\right) \leq Md    
\end{multline*}
and the pseudotrajectory $\{y_{k \in [-(N+1), N+1]}\}$ is $Md$-shadowed by $\{q_{k \in [-(N+1), N+1]}\}$, which completes the induction step and proves Lemma \ref{lem:NLipSh}.
\end{proof}

Every finite interval $I\subset\Zz$ is contained in $[-N,N]$ for some $N>0$.
Restricting the trajectory supplied by Lemma~\ref{lem:NLipSh} to $I$ proves Lemma~\ref{lem:FinLipSh}.
\end{proof}

\subsection{Lipschitz shadowing. Proof of Theorem~\ref{thm:main-results}\,(\ref{item:main-LipSh})}\label{sec:proofLipSh}

The main step of the proof of Theorem~\ref{thm:main-results}\,(\ref{item:main-LipSh}) is the following lemma.

\begin{lem}\label{lem:BS-SH}
Assume that $f:\Bb\to\Bb$ is a $C^1$-diffeomorphism satisfying \eqref{eq:C1-1}-\eqref{eq:C1-r2}, and that there exists $d_1, M > 0$ such that, for any $d < d_1$ and any $d$-pseudotrajectory $\{y_k\}$, the sequence ${\mathcal{B}} = \{B_k = Df(y_k)\}_{k \in \Zz}$ satisfies the bounded solution property with constant $M$. Then $f$ satisfies the Lipschitz shadowing property. 
\end{lem}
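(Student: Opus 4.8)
The plan is to prove Lemma~\ref{lem:BS-SH} by an iterative scheme: starting from a $d$-pseudotrajectory $\{y_k^0\} = \{y_k\}$, construct a sequence of $\delta_l$-pseudotrajectories $\{y_k^l\}$ with geometrically decreasing errors $\delta_l$, each shadowing the previous one with bound proportional to $\delta_l$, and pass to the limit. The crucial point is that the limit is an \emph{exact} trajectory because the errors are summable, and it stays $O(d)$-close to $\{y_k\}$ because $\sum_l \delta_l = O(d)$. This mirrors step~3 of the proof scheme of Theorem~\ref{thm:LipSh} described in Section~\ref{sec:scheme}.

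First I would fix the constants. By \eqref{eq:C1-r2} choose $d_0 \in (0, d_1)$ small enough that for some fixed contraction factor $\theta \in (0,1)$ one has $2(R+RM+M)\cdot r\bigl((R+RM+M)\,z\bigr) \le \theta$ for all $z \le (something)\cdot d_0$, and also that all the relevant pseudotrajectory error estimates stay below $d_1$. The iteration step is the core computation: given a $\delta_l$-pseudotrajectory $\{y_k^l\}$ with $\delta_l < d_1$, set $B_k^l = Df(y_k^l)$; by hypothesis $\{B_k^l\}$ has the bounded solution property with constant $M$, so for the sequence $w_{k+1}^l = -\delta_l^{-1}\bigl(y_{k+1}^l - f(y_k^l)\bigr)$ (which has norm $\le 1$) there exists $\{v_k^l\}$ with $|v_k^l| \le M$ solving $v_{k+1}^l = B_k^l v_k^l + w_{k+1}^l$. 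Put $y_k^{l+1} = y_k^l + \delta_l v_k^l$, so $|y_k^{l+1} - y_k^l| \le M\delta_l$. A Taylor/Lipschitz estimate using \eqref{eq:C1-1}, exactly as in the computation at the end of the proof of Lemma~\ref{lem:NLipSh}, shows that $\{y_k^{l+1}\}$ is a $\delta_{l+1}$-pseudotrajectory with $\delta_{l+1} \le 2(R+RM+M)\delta_l \cdot r\bigl((R+RM+M)\delta_l\bigr) \le \theta \delta_l$ (here one needs $|y_k^{l+1} - y_k^l + \text{corrections}|$ to stay in the regime where $r$ is small, which is why we track that $\sum \delta_l$ is bounded).

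Then I would iterate: setting $\delta_0 = d$, inductively $\delta_l \le \theta^l d$, hence $\sum_l \delta_l \le d/(1-\theta) =: 2d$ after renaming constants, and $\sum_l |y_k^{l+1} - y_k^l| \le M\sum_l \delta_l \le 2Md$. Therefore $\{y_k^l\}_l$ is Cauchy in $\Bb$ uniformly in $k$, and $x_k := \lim_{l\to\infty} y_k^l$ satisfies $|x_k - y_k| \le 2Md$ for all $k$. Since $\delta_l \to 0$, the pseudotrajectory defect $|y_{k+1}^l - f(y_k^l)| \le \delta_l \to 0$, and $f$ is continuous, so $x_{k+1} = f(x_k)$; thus $\{x_k\}$ is an exact trajectory $2Md$-shadowing $\{y_k\}$, giving the Lipschitz shadowing property with constant $2M$.

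The main obstacle I anticipate is bookkeeping of the constants so that the iteration closes: I must ensure that at every step the corrected sequence $y_k^{l+1}$ remains within a uniformly bounded neighborhood of the original pseudotrajectory (so the arguments of $r$ stay below the threshold $d_0$ where $r$ is small and the contraction factor $\theta<1$ is valid), and that the freshly produced defect $\delta_{l+1}$ stays $< d_1$ so the bounded solution hypothesis applies again at the next step. Both follow once $\sum_l \delta_l$ is controlled a priori, so the logic is: choose $d_0$ small, prove the summability bound and the neighborhood bound simultaneously by induction on $l$. Note this lemma is the bridge; Theorem~\ref{thm:LipSh} then follows by combining it with Theorem~\ref{thm:FinLipSh}, Lemma~\ref{lem:SBS-robust}, and Lemma~\ref{lem:Bk-Inf} to verify that $\{Df(y_k)\}$ indeed has the bounded solution property with a uniform constant, which is carried out separately.
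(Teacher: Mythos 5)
Your proposal is correct and follows essentially the same route as the paper: use the bounded solution property along $\{y_k^l\}$ to produce a correction that contracts the pseudotrajectory defect by a fixed factor, sum the telescope $\sum_l |y_k^{l+1}-y_k^l| \le M\sum_l\delta_l = O(d)$, and pass to the limit using continuity of $f$. One small bookkeeping inefficiency worth flagging: because here $B_k^l = Df(y_k^l)$ is evaluated at the pseudotrajectory points themselves, the bounded-solution identity $y_{k+1}^l + \delta_l v_{k+1}^l = f(y_k^l) + B_k^l\delta_l v_k^l$ yields the exact cancellation $\bigl|y_{k+1}^{l+1}-f(y_k^{l+1})\bigr| = \bigl|s_f(y_k^l,\delta_l v_k^l)\bigr| \le M\delta_l\, r(M\delta_l)$, which is strictly simpler than the $2(RM+R+L)\delta_l\, r\bigl((RM+R+L)\delta_l\bigr)$ estimate you import from Lemma~\ref{lem:NLipSh} --- the extra $(RM+R+\cdot)$ factors there arose only because the linearization was taken along a nearby exact trajectory $p_k$ rather than along $y_k$; your extra terms are harmless but unnecessary.
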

\begin{proof}
    By \eqref{eq:C1-r2} there exists $d_0 \in (0, d_1)$ such that
\begin{equation}\label{eq:1.1'}
    M\cdot r(Md_0) < \frac{1}{2}.
\end{equation}
\begin{statement}\label{stat:d2}
    For $d_0$ satisfying \eqref{eq:1.1'} and any $d$-pseudotrajectory $\{y_k\}_{k \in \Zz}$ with $d<d_0$ there exists $d/2$-pseudotrajectory $\{z_k\}_{k \in \Zz}$ such that
    \begin{equation}\label{eq:d2}
    |y_k-z_k| < Md.
    \end{equation}
\end{statement}
\begin{proof}
    Consider a $d$-pseudotrajectory $\{y_k\}$ with $d < d_0$. Similarly to the proof of Lemma~\ref{lem:FinLipSh} consider the sequence $\{w_{k \in \Zz}\}$ defined by
\begin{equation}\label{eq:defw}
    y_{k+1}-f(y_k) = -dw_{k+1}, \quad k \in \Zz.
\end{equation}
    Consider $B_k = Df(y_k)$. By the assumptions of Lemma \ref{lem:BS-SH} there exists a sequence $v_k \in \Bb$ such that
    $$
    v_{k+1} - B_kv_k = w_{k+1}, \quad |v_k| \leq M, \quad k \in \Zz.
    $$
 Consider the sequence $\{z_k = y_k + dv_k\}_{k \in \Zz}$. By construction, $\{z_k\}$ satisfies \eqref{eq:d2}. We now show that $\{z_k\}$ is a $d/2$-pseudotrajectory. By the construction of the sequence $\{v_k\}$ we have
    \begin{equation}\label{eq:zkBk}
        y_{k+1} + dv_{k+1} - (f(y_k) + B_kdv_k) = 0.
    \end{equation}
     Due to \eqref{eq:C1-1}, \eqref{eq:1.1'} and \eqref{eq:zkBk} the following inequalities hold
\begin{multline*}
|z_{k+1} - f(z_k)| = |y_{k+1} + dv_{k+1} - (f(y_k)+B_kdv_k) - (f(y_k+dv_k) - f(y_k) -B_kdv_k)|  = \\
|f(y_k+dv_k) - f(y_k) -B_kdv_k| \leq d|v_k|r(d|v_k|) \leq Md \cdot r(Md) \leq \frac{d}{2}.
\end{multline*}
Proposition \ref{stat:d2} is proved.
\end{proof}

Continue the proof of Lemma \ref{lem:BS-SH}.
For a given $d$-pseudotrajectory $\{y_k\}_{k \in \Zz}$ with $d< d_0$ consider the sequence of $\delta_l$-pseudotrajectories $\{y_k^l\}_{k \in \Zz}$ defined inductively.

Define $\delta_0 = d$, $\{y_k^0 = y_k\}_{k \in \Zz}$.

By a given $\delta_l$-pseudotrajectory $\{y_k^l\}_{k \in \Zz}$ with $\delta_l < d_0$ according to Proposition  \ref{stat:d2} choose $\delta_{l+1} = \delta_l/2$-pseudotrajectory $\{y_k^{l+1}\}$ such that 
$
|y_k^{l+1} - y_k^l| \leq M \delta_l.
$
By construction we have $\delta_l = \frac{d}{2^l}$ and
\begin{equation}\label{eq:3.1-3.2}
    |y_{k+1}^l - f(y_k^l)| \leq \frac{d}{2^l}, \quad |y_k^{l+1} - y_k^l| \leq M\frac{d}{2^l}.
\end{equation}   
Hence, for any fixed $k \in \Zz$, the sequence $\{y_k^l\}$ is a Cauchy sequence with respect to $l$ and there exists a limit
$
x_k = \lim_{l \to \infty} y_k^l.
$
Note that
\begin{equation}\label{eq:42.5}
|y_k - x_k| = |y_k^0 - x_k| \leq \sum_{l\geq 0} |y_k^l-y_k^{l+1}| \leq \sum_{l\geq 0}M\frac{d}{2^l} = 2Md.    
\end{equation}
The continuity of $f$ and condition \eqref{eq:3.1-3.2} imply that $x_{k+1} = f(x_k)$. Therefore, $\{x_k\}$ is an exact trajectory of $f$ that $2Md$-shadows $\{y_k\}$. Lemma \ref{lem:BS-SH} is proved.
\end{proof}

Now we show that the hypotheses of Theorem~\ref{thm:main-results}\,(\ref{item:main-LipSh}) imply the hypotheses of Lemma~\ref{lem:BS-SH}. By Lemma \ref{lem:FinLipSh}, the map $f$ satisfies the finite Lipschitz shadowing property. Let $L > 0$ be the constant from Lemma \ref{lem:SBS}, let $\varepsilon > 0$ be the constant from Lemma \ref{lem:SBS-robust}, and let $M, d_0$ be the constants from the finite Lipschitz shadowing property. 

By condition \eqref{eq:C1-r1}, choose $\delta > 0$ such that if $|x - y|< \delta$ we have $\|Df(x)-Df(y)\| < \varepsilon$. Set $d_1 := \min(d_0, \delta/M)$. Fix $d<d_1$ and a $d$-pseudotrajectory $\{y_k\}_{k \in \Zz}$. Define the sequence of operators $\mathcal{B} = \{B_k = Df(y_k)\}_{k \in \Zz}$. We claim that the sequence $\mathcal{B}$ satisfies the bounded solution property with constant $4L$.

By Lemma \ref{lem:FinLipSh}, for any finite interval $I \subset \Zz$ the sequence $\{y_k\}_{k \in I}$ can be $Md$-shadowed by an exact trajectory $\{x_k\}_{k \in I}$. Set $\{A_k = Df(x_k)\}_{k \in I}$. By Lemma~\ref{lem:SBS} it satisfies the strong bounded solution property with constant $L$. Moreover, since $Md < \delta$ we have $\|B_k - A_k\| \leq \varepsilon$, $k \in I$.
By Lemma \ref{lem:SBS-robust}, the sequence $\{B_k\}_{k \in I}$ satisfies the bounded solution property with constant~$2L$. Lemma~\ref{lem:Bk-Inf} implies that $\{B_k\}_{k \in \Zz}$ satisfies the bounded solution property with constant~$4L$. Therefore, the hypotheses of Lemma \ref{lem:BS-SH} hold, which completes the proof of Theorem~\ref{thm:main-results}\,(\ref{item:main-LipSh}).

\subsection{Periodic shadowing. Proof of Theorem~\ref{thm:main-results}\,(\ref{item:main-LipPerSh})}\label{sec:LipPerSh}

Let $\{y_k\}$ be an $m$-periodic $d$-pseudotrajectory. By Theorem~\ref{thm:main-results}\,(\ref{item:main-LipSh}), there exists  $M > 0$ such that $\{y_k\}$ is $Md$-shadowed by an exact trajectory (not necessarily periodic) $\{x_k\}$: 
$$
\dist(y_k, x_k) \leq Md, \quad k \in \mathbb{Z}.
$$
Define the sequences of the linear maps $\mathcal{B} = \{B_k := Df(y_k)\}$ and $\mathcal{A} = \{A_k := Df(x_k)\}$. Since $f$ satisfies generalized hyperbolicity, the sequence $\mathcal{A}$ satisfies generalized hyperbolicity as well. By item (Robust1) of Lemma~\ref{lem:CL-robust-Ak}, for any $\lambda_1 \in (\lambda, 1)$ there exists $C_1 = C_1(C, \lambda, \lambda_1, R) > 0$ such that, for sufficiently small $d$, the sequence $\mathcal{B}$ also satisfies generalized hyperbolicity with constants $C_1$, $\lambda_1$.

Consider the $m$-periodic sequence $\{w_k \in \mathbb{B}\}$ defined by~\eqref{eq:defw}. By Lemma~\ref{lem:SBS-periodic}, there exist $L = L(C_1, \lambda_1) > 0$ and an $m$-periodic sequence $\{v_k \in \mathbb{B}\}$ such that
$$
v_{k+1} = B_k v_k + w_{k+1}, \quad \text{and} \quad \|v_k\| \leq L.
$$
Using an argument as in Proposition~\ref{stat:d2}, and reducing $d$ if necessary, there exists a constant $M_1 > 0$ such that the sequence $y_k^1 := y_k + d v_k$, $k \in \mathbb{Z}$, is an $m$ -periodic $d/2$ -pseudotrajectory and, furthermore, satisfies $\|y_k - y_k^1\| \leq M_1 d$.
By iterating this construction, as in the proof of Theorem~\ref{thm:main-results}\,(\ref{item:main-LipSh}), we obtain a sequence of $m$-periodic pseudotrajectories $\{y_k^l\}_{l \geq 1}$ satisfying the following estimates (compare with~\eqref{eq:3.1-3.2}):
$$
\|y_{k+1}^l - f(y_k^l)\| \leq \frac{d}{2^l}, \quad \|y_k^{l+1} - y_k^l\| \leq M_1 \frac{d}{2^l}.
$$
Taking the pointwise limit $z_k := \lim_{l \to \infty} y_k^l$, we obtain an $m$-periodic exact trajectory, similarly to~\eqref{eq:42.5}, satisfying
$
\|y_k - z_k\| \leq 2M_1 d.
$
This completes the proof of Theorem~\ref{thm:main-results}\,(\ref{item:main-LipPerSh}).

\begin{proof}[Proof of Theorem~\ref{thm:main-results}\,(\ref{item:main-chain})]
Let $M, d_0 > 0$ be the constants from the Lipschitz periodic shadowing property in Theorem~\ref{thm:main-results}\,(\ref{item:main-LipPerSh}). Fix $x \in CR(f)$. For any $d \in (0, d_0)$ consider a finite pseudotrajectory $y_{k \in [0, N]}$ such that $y_0 = y_N = x$. This segment can be continued to an $N$-periodic pseudotrajectory 
$$
y_{k+iN} = y_k, \quad k \in [0, N-1], i \in \Zz.
$$
By Theorem~\ref{thm:main-results}\,(\ref{item:main-LipPerSh}) there exists an $N$-periodic exact trajectory $x_k^{(d)}$ that $Md$-shadows $\{y_k\}$; in particular
$
|x - x_0^{(d)}| = |y_0 - x_0^{(d)}| \leq Md
$.
Since $x_0^{(d)} \in Per(f)$ it follows that $\overline{Per(f)} = CR(f)$.
\end{proof}

\subsection{Uniform $C^1$-robustness. Proof of Theorem~\ref{thm:main-results}\,(\ref{item:main-robust})}\label{sec:robust}

\begin{proof}[Proof of Theorem~\ref{thm:main-results}\,(\ref{item:main-robust})]
The proof of Theorem~\ref{thm:main-results}\,(\ref{item:main-robust}) combines Theorem~\ref{thm:main-results}\,(\ref{item:main-LipSh})--(\ref{item:main-LipPerSh}) with the robustness of generalized hyperbolicity for sequences of operators (Lemma~\ref{lem:CL-robust-Ak}).

Indeed, consider a diffeomorphism $f:\Bb \to \Bb$ satisfying generalized hyperbolicity with constants $C, \lambda$. Fix $\lambda_1 \in (\lambda, 1)$. 
For any trajectory $\{x_k\}$, the sequence of linear operators $\{Df(x_k)\}$ satisfies generalized hyperbolicity. Take $M, d_0 > 0$ as in Theorem~\ref{thm:main-results}\,
(\ref{item:main-LipSh})--(\ref{item:main-LipPerSh}) and $\varepsilon > 0, C_1 > C$ provided by Lemma~\ref{lem:CL-robust-Ak}. Choose $\delta \in (0, \min(d_0, \varepsilon/2))$ such that 
\begin{equation}\label{eq:delta1}
    \|Df(x)-Df(y)\| \leq \varepsilon/2, \quad |x-y| < M\delta.
\end{equation}
Let $g\in\DUC(\Bb)$ satisfy
\begin{equation}\label{eq:delta2}
\|f-g\|_{C^1}<\delta.
\end{equation}

Consider the set $\mathcal{O}$ of all trajectories of the diffeomorphism $g$. For any trajectory $\omega \in \mathcal{O}$ choose a point $y_{\omega} \in \omega$. 

We now construct spaces $E_y^{s, u}$ for all points $y \in \omega$. Consider an exact trajectory $y_{k\in \Zz}$ of diffeomorphism $g$ with $y_0 = y_{\omega}$.
Note that $|y_{k+1} - f(y_k)| = |g(y_k) - f(y_k)| \leq \delta$, $k \in \Zz$, hence $y_k$ is a $\delta$-pseudotrajectory of the diffeomorphism $f$. Denote $B_k = Dg(y_k)$. Consider two cases.

\textbf{Case 1.} If the sequence $\{y_{k}\}$ is not periodic, then the points $\{y_k\}$ are pairwise distinct. By Theorem~\ref{thm:main-results}\,(\ref{item:main-LipSh}) there exists an exact trajectory $\{x_k\}$ of diffeomorphism $f$ that $M\delta$-shadows~$\{y_k\}$:
\begin{equation}\label{eq:robustykxk}
    |y_k-x_k| \leq M\delta, \quad x_{k+1} = f(x_k), \quad k \in \Zz.    
\end{equation}
Denote $A_k = Df(x_k)$. According to \eqref{eq:delta1}, \eqref{eq:delta2} the following holds
\begin{equation}\label{eq:robustAkBk}
\|A_k - B_k\| \leq \|A_k - Df(y_k)\| + \|Df(y_k) - B_k\| \leq \varepsilon/2 + \delta < \varepsilon.
\end{equation}
By item (Robust1) in Lemma \ref{lem:CL-robust-Ak}, the sequence $\{B_k\}$ satisfies generalized hyperbolicity with constants $C_1, \lambda_1$. 

\textbf{Case 2.} Suppose that the sequence $\{y_{k}\}$ is $m$-periodic. Then the sequence $\{B_k\}$ is $m$-periodic as well. By Theorem~\ref{thm:main-results}\,(\ref{item:main-LipPerSh}), there exists an $m$-periodic exact trajectory $\{x_k\}$ satisfying \eqref{eq:robustykxk}. The sequence $\{A_k = Df(x_k)\}$ is $m$-periodic and satisfies \eqref{eq:robustAkBk}. By item (Robust2) in Lemma \ref{lem:CL-robust-Ak}, the sequence $\{B_k\}$ satisfies $m$-periodic generalized hyperbolicity with constants $C_1, \lambda_1$. 

In both cases denote the spaces corresponding to generalized hyperbolicity for $\{B_k\}$ by $\tilde{E}_k^s$, $\tilde{E}_k^u$. Define (slightly abusing notation) for point~$y_k$ the spaces 
\begin{equation}\label{eq:Eg}
    \tilde{E}_{y_k}^{s, u} := \tilde{E}_k^{s, u}.    
\end{equation}
If $\omega$ is an $m$-periodic trajectory, then $\tilde{E}_k^{s, u} = \tilde{E}_{k+m}^{s, u}$, and hence $\tilde{E}_{y_k}^{s, u}$ is well-defined. Moreover, Definition \ref{def:CLf} does not impose any condition on the dependence of $E_y^{s, u}$ and the corresponding projections on the point $y$. Hence the splitting defined by \eqref{eq:Eg} satisfies the conditions of generalized hyperbolicity in Definition \ref{def:CLf}. 
\end{proof}
\begin{remark}\label{rem:AoC}
Note that this construction uses the Axiom of Choice, and we do not know whether this dependence can be removed. For each trajectory $\omega$ of $g$, we choose a base point $y_\omega$ and a trajectory of $f$ shadowing the trajectory of $g$ through $y_\omega$. Once these choices are fixed, the construction in Lemma~\ref{lem:CL-robust-Ak} and its equivariance under shifts define the splitting consistently along the whole trajectory $\omega$. The Axiom of Choice is used to make these selections simultaneously over all trajectories of $g$; the nonuniqueness of shadowing prevents a canonical choice.
\end{remark}

\section{Structural stability}\label{sec:SS}

In the proof of Theorem \ref{thm:SS} we use a scale-of-spaces analog of the strong bounded solution property. A straightforward analog would be to consider the equation
$$
w(x) = v(f(x)) - Df(x)v(x), \quad x \in \Bb,
$$
where $v, w: \Bb \to \Bb$ are bounded uniformly continuous functions (compare with the notion of infinitesimal stability \cite{Mane1977}). However, we need a slightly more general notion.

Consider a homeomorphism $\alpha: \Bb \to \Bb$ and a linear cocycle $A:\Bb \to GL(\Bb)$. Assume that there exists $R > 0$ such that for any $x \in \Bb$, the operator $A(x):\Bb \to \Bb$ is a bounded linear isomorphism satisfying 
$
\|A(x)\|, \|A^{-1}(x)\| \leq R
$.
Moreover, assume that $A(x)$ is uniformly continuous in $x$, i.e., for any $\varepsilon > 0$ there exists $\delta > 0$ such that
$$
\|A(x) - A(y)\|, \|A^{-1}(x) - A^{-1}(y)\|\leq \varepsilon, \quad \mbox{for $|x-y| < \delta$}.
$$
Denote
$$
A^n_{\alpha}(x) = \begin{cases}
A(\alpha^{n-1}(x))\dots A(\alpha(x))A(x), & \quad n >0,\\
Id, &\quad n = 0,\\
A^{-1}(\alpha^{n}(x)) \dots A^{-1}(\alpha^{-2}(x))A^{-1}(\alpha^{-1}(x)), & \quad n < 0.
\end{cases}
$$

\begin{defin}\label{def:CL-cocycle}
    We say that the pair $(\alpha, A)$ satisfies \textit{cocycle generalized hyperbolicity} if there exist $C > 0$, $\lambda \in (0, 1)$ and a family of projections $P_x, Q_x: \Bb \to \Bb$ satisfying the following. 
\begin{itemize}
    \item[(C-GH1)] \textbf{Splitting.} $P_x + Q_x = Id$, $\|P_x\|, \|Q_x\| \leq C$. \\Denote $E^s_x = P_x(\Bb), E^u_x = Q_x(\Bb)$; then $E^s_x \oplus E^u_x = \Bb$.
    \item[(C-GH2)] \textbf{Inclusion.} $A(x)E^s_x \subseteq E^s_{\alpha(x)}$, $A^{-1}(x)E^u_{\alpha(x)} \subseteq E^u_{x}$,
    \item[(C-GH3)] \textbf{Exponential estimates.}
    $$
    |A^n_{\alpha}(x)v^s_x| \leq C \lambda^n |v^s_x|, \quad v^s_x \in E^s_x, n  \geq 0;
    $$
    $$
    |A^{-n}_{\alpha}(x)v^u_x| \leq C \lambda^n |v^u_x|, \quad v^u_x \in E^u_x, n  \geq 0.
    $$
\end{itemize}
If, additionally, $\alpha$ and $\alpha^{-1}$ are uniformly continuous and the projections $P_x$, $Q_x$ depend uniformly continuously on $x$, i.e. satisfy \eqref{eq:cont-PQ}, then we say that $(\alpha,A)$ satisfies \textit{uniformly continuous cocycle generalized hyperbolicity}.
\end{defin}

Consider a pair $(\alpha, A)$ satisfying uniformly continuous cocycle generalized hyperbolicity, with associated spaces $E^s_x$, $E^u_x$. As in Section \ref{sec:robust-Ak}, define
$$
F_x := A^{-1}(x)(E_{\alpha(x)}^u) \subset E_x^u.
$$
Let $X = BUC(\Bb, \Bb)$ be the Banach space of all bounded uniformly continuous functions from $\Bb$ to itself with the supremum norm. 
Define the subspace $Y \subset X$ by
$$
Y = \{v \in X: v(x) \in E_x^s \oplus F_x\}.
$$
The space $Y$ is a closed linear subspace of $X$ and hence is a Banach space with the supremum norm.
Define the operator $T_{\alpha, A}: X \to X$ by
\begin{equation}\label{eq:Twv}
T_{\alpha, A} v = w, \quad \mbox{where $w(\alpha(x)) = v(\alpha(x)) - A(x)v(x)$}.    
\end{equation}

\begin{lem}\label{lem:SS1}
Assume that $(\alpha, A)$ satisfies the uniformly continuous cocycle generalized hyperbolicity with constants~$C, \lambda$. Then there exists $L = L(\lambda, C)$, such that the restriction $T_{\alpha, A}: Y \to X$ is a bijection and admits a bounded inverse  
\begin{equation}\label{eq:Sf}
S_{\alpha, A} = T^{-1}_{\alpha, A}: X \to Y, \quad \|S_{\alpha, A}\| \leq L.
\end{equation}
\end{lem}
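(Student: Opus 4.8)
The plan is to construct the inverse $S_{\alpha,A}$ explicitly by a Green's‑function/Perron‑type series, in direct analogy with formula~\eqref{eq:Perron} from the proof of Lemma~\ref{lem:SBS}, but with the discrete time index replaced by the orbit of $\alpha$. Concretely, given $w\in X$ I would set
$$
(S_{\alpha,A}w)(x)\;=\;\sum_{n\ge0}A^{n}_{\alpha}(\alpha^{-n}(x))\,P_{\alpha^{-n}(x)}\,w(\alpha^{-n}(x))\;-\;\sum_{n\ge1}A^{-n}_{\alpha}(\alpha^{n}(x))\,Q_{\alpha^{n}(x)}\,w(\alpha^{n}(x)).
$$
Since $P_yw(y)\in E^s_y$ and $Q_yw(y)\in E^u_y$, condition (C-CL3) together with (C-CL1) gives $\|A^{n}_{\alpha}(\alpha^{-n}(x))P_{\alpha^{-n}(x)}\|\le C^2\lambda^n$ and $\|A^{-n}_{\alpha}(\alpha^{n}(x))Q_{\alpha^{n}(x)}\|\le C^2\lambda^n$, so both series converge absolutely and uniformly in $x$ with $|(S_{\alpha,A}w)(x)|\le C^2\tfrac{1+\lambda}{1-\lambda}\|w\|$; this identifies $L=C^2\tfrac{1+\lambda}{1-\lambda}=L(\lambda,C)$ and the bound $\|S_{\alpha,A}\|\le L$. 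Linearity of $S_{\alpha,A}$ is immediate from the formula.

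Next I would verify the three defining features of $v:=S_{\alpha,A}w$. \emph{Continuity:} each summand $x\mapsto A^{\pm n}_{\alpha}(\alpha^{\mp n}(x))\,P_{\alpha^{\mp n}(x)}\,w(\alpha^{\mp n}(x))$ is continuous — here one uses that $\alpha$ is a homeomorphism, that $A(\cdot)$ is uniformly continuous so $A^{n}_{\alpha}(\cdot)$ is continuous into $\mathcal{L}(\Bb)$, that $w$ is continuous, and, crucially, that $P_x,Q_x$ depend continuously on $x$ (the ``continuous cocycle'' hypothesis) — and the uniform geometric majorant lets one pass continuity to the sum, so $v\in C_b(\Bb,\Bb)=X$. \emph{Membership in $Y$:} in the $P$‑series the $n$‑th term is $A(\alpha^{-1}(x))\cdots A(\alpha^{-n}(x))$ applied to a vector of $E^s_{\alpha^{-n}(x)}$, hence by repeated use of the forward inclusion in (C-CL2) it lies in the closed subspace $E^s_x$; in the $Q$‑series the $n$‑th term is $A(x)^{-1}$ applied to a vector which, by repeated use of the inclusion in (C-CL2) along the inverse cocycle, lies in $E^u_{\alpha(x)}$, hence the term lies in $A(x)^{-1}E^u_{\alpha(x)}=F_x$; thus $v(x)\in E^s_x\oplus F_x$ and $v\in Y$. \emph{Right inverse:} $T_{\alpha,A}v=w$ follows from a telescoping computation identical to the one in Lemma~\ref{lem:SBS}, the surviving ``$i=k$'' contributions recombining to $(P_{x_k}+Q_{x_k})w(x_k)=w(x_k)$ by (C-CL1). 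Hence $T_{\alpha,A}S_{\alpha,A}=\mathrm{Id}_X$.

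It remains to prove $T_{\alpha,A}\colon Y\to X$ is injective; combined with the right inverse $S_{\alpha,A}$ (whose image lies in $Y$) this yields bijectivity and $S_{\alpha,A}=T_{\alpha,A}^{-1}$, because then for $v\in Y$ the element $S_{\alpha,A}T_{\alpha,A}v-v\in Y$ is annihilated by $T_{\alpha,A}$ and so vanishes. So suppose $v\in Y$ with $T_{\alpha,A}v=0$, i.e. $v(\alpha(x))=A(x)v(x)$ for all $x$; fix $x_0$, set $x_k=\alpha^k(x_0)$, $v_k=v(x_k)$, and decompose $v_k=v_k^s+v_k^F$ with $v_k^s\in E^s_{x_k}$, $v_k^F\in F_{x_k}\subseteq E^u_{x_k}$. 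Since $A(x_k)v_k^s\in E^s_{x_{k+1}}$ while $A(x_k)v_k^F\in A(x_k)F_{x_k}=E^u_{x_{k+1}}$, matching components forces $v_{k+1}^s=A(x_k)v_k^s$ and $v_{k+1}^F=A(x_k)v_k^F$. Propagating the stable part backward gives $v_0^s=\Phi_A(0,-m)v_{-m}^s$ with $|v_0^s|\le C\lambda^m|v_{-m}^s|\le C^2\lambda^m\|v\|\to0$, so $v_0^s=0$; propagating the $F$‑part forward and then inverting gives $v_0^F=A^{-m}_{\alpha}(x_m)v_m^F$ with $|v_0^F|\le C\lambda^m|v_m^F|\le C^2\lambda^m\|v\|\to0$, so $v_0^F=0$. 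Hence $v\equiv0$. I expect the injectivity step to be the main obstacle: unlike the exponential‑dichotomy setting there is no expansivity, the homogeneous equation has many bounded solutions, and the argument must exploit that restricting to $Y$ — equivalently replacing $E^u_x$ by the smaller space $F_x=A(x)^{-1}E^u_{\alpha(x)}$ on which $A$ is a genuine bijection onto $E^u_{\alpha(x)}$ — removes exactly the surplus bounded solutions; keeping the index bookkeeping in (C-CL2) and in the definition of $F_x$ consistent through the membership and injectivity arguments is the delicate point.
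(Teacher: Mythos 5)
Your proof is correct and follows essentially the same route as the paper: the Perron/Green's‑function formula you write down is exactly the paper's \eqref{eq:Svdef} after re‑indexing, the $C^2\lambda^n$ bounds, the uniform‑limit continuity argument, the verification that the $P$‑series lands in $E^s_x$ and the $Q$‑series in $F_x = A(x)^{-1}E^u_{\alpha(x)}$, and the injectivity argument via component‑wise propagation along the orbit all match the paper's steps. The only cosmetic difference is that you bound $|v_0^s|$ and $|v_0^F|$ from above by $C^2\lambda^m\|v\|$, whereas the paper equivalently bounds $|v_{\mp n}|$ from below by $C^{-2}\lambda^{-n}|v_0^{s}|$, $C^{-2}\lambda^{-n}|v_0^{u}|$ to contradict boundedness.
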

\begin{remark}
This statement generalizes \cite[Theorem 1, Step 1]{NilsonAli}, where the case in which $A(x)$ is a constant generalized hyperbolic isomorphism was considered.
\end{remark}

\begin{proof}
    Consider the operator $S_{\alpha, A}:X \to Y$ as in Lemma \ref{lem:SBS}. 
    Given $w \in X$, define $v = S_{\alpha, A}w$ by
\begin{equation}\label{eq:Svdef}
v(x) = \sum_{i \leq 0} A^{-i}_{\alpha}(\alpha^i(x)) P_{\alpha^i(x)}w(\alpha^i(x)) -\sum_{i>0}A^{-i}_{\alpha}(\alpha^i(x))Q_{\alpha^i(x)}w(\alpha^i(x)).    
\end{equation}        
As in Lemma \ref{lem:SBS}, the series \eqref{eq:Svdef} converges for every $x$ and defines a function $v$ that satisfies \eqref{eq:Twv}. Moreover, $|v(x)| \leq L\|w\|$, where $L = C^2\frac{1+\lambda}{1-\lambda}$. Define $v_N(x)$ analogously by truncating the sum in \eqref{eq:Svdef} to $|i| \leq N$. Then $v_N \to v$ uniformly and since each $v_N$ is uniformly continuous, $v$ is also uniformly continuous.
The first sum in \eqref{eq:Svdef} takes values in $E_x^s$, and the second sum takes values in $F_x$. Hence, $v(x) \in E_x^s \oplus F_x$ and $v \in Y$. Therefore, $S_{\alpha, A}$ is well-defined and satisfies
$
T_{\alpha, A}S_{\alpha, A} = Id$, $\|S_{\alpha, A}\| \leq L,
$
so $T_{\alpha, A}: Y \to X$ is surjective. 

We now show that its kernel is trivial. Assume that for some $v \in Y$, $v \ne 0$, 
\begin{equation}\label{eq:KerT}
T_{\alpha, A}v = 0.    
\end{equation}
Consider $x \in \Bb$ such that $v(x) \ne 0$. Relation \eqref{eq:KerT} implies that
$$
v(\alpha^n(x)) = A^n_{\alpha}(x)v(x), \quad n \in \Zz.
$$
Denote $v_n = v(\alpha^n(x)) \in E^s_{\alpha^n(x)} \oplus F_{\alpha^n(x)}$. Define $v_n = v_n^s + v_n^u$, where $v_n^s \in E^s_{\alpha^n(x)}$, $v_n^u \in F_{\alpha^n(x)}$. Note that 
$$
A(\alpha^n(x)) v_n^s \in E_{\alpha^{n+1}(x)}^s, \quad A(\alpha^n(x)) v_n^u \in E_{\alpha^{n+1}(x)}^u.
$$
Hence,
$$
v_{n+1}^s = A(\alpha^n(x)) v_n^s, \quad v_{n+1}^u = A(\alpha^n(x)) v_n^u, \quad n \in \Zz.
$$
By induction, we conclude
$$
v_{n+k}^s = A^k_{\alpha}(\alpha^n(x)) v_n^s, \quad v_{n+k}^u = A^k_{\alpha}(\alpha^n(x)) v_n^u, \quad n, k \in \Zz.
$$
Conditions (C-GH1), (C-GH3) imply that
$$
|v_{-n}| \geq \frac{1}{C^2}\lambda^{-n} |v_{0}^s|, \quad |v_{n}| \geq \frac{1}{C^2}\lambda^{-n} |v_{0}^u|, \quad n > 0.
$$
Note that 
\begin{itemize}
    \item if $v_{0}^s \ne 0$ then the values $|v_{-n}|$, $n \geq 0$ are unbounded,
    \item if $v_{0}^u \ne 0$ then the values $|v_{n}|$, $n \geq 0$ are unbounded.
\end{itemize}
Since $v(x) \ne 0$, at least one of $v_0^s$, $v_0^u$ is nonzero, so the sequence $\{v(\alpha^n(x))\}_{n \in \Zz}$ is unbounded; this is impossible, since $v \in Y \subset X$ is bounded. This shows that the kernel of $T_{\alpha, A}: Y \to X$ is trivial, so $T_{\alpha, A}$ is injective. Hence, $T_{\alpha, A}$ is invertible and $S_{\alpha, A}$ is its inverse.   
\end{proof}
We will need the following form of robustness of uniformly continuous cocycle generalized hyperbolicity. We construct the splitting for the perturbed cocycle by a graph transform of the splitting of the original cocycle. Note that since $E^s_x$, $E^u_x$ vary with $x$ we need the following notion.
\begin{defin}
    Consider a cocycle $(\alpha, A)$ satisfying uniformly continuous cocycle generalized hyperbolicity. We say that the linear mappings $H^s_x:E^s_x \to F_x$ depend uniformly continuously on $x$ if the mappings
    $$
        \hat{H}^s_x:\mathbb{B} \to \mathbb{B}, \quad \mbox{ with $\hat{H}^s_x(v) := H^s_x(P_x v)$}
    $$
    are uniformly continuous in $x$ in the operator norm.
 Similarly, we define the uniform continuity for $H^u_{x}: E_{\alpha(x)}^u \to A(x)E_x^s \subset E_{\alpha(x)}^s$.
\end{defin}

\begin{lem}\label{lem:SS2}
    Assume that the pair $(\alpha, A)$ satisfies uniformly continuous cocycle generalized hyperbolicity with constants $C > 0$, $\lambda \in (0, 1)$. Then for any $\lambda_1 \in (\lambda, 1)$ and $\varepsilon >0$ there exist $C_1 > 1$ and $\delta >0$ such that for any homeomorphism $\beta: \Bb \to \Bb$, with $\beta$ and $\beta^{-1}$ uniformly continuous, satisfying 
\begin{equation}\label{eq:al-be-ep}
    |\alpha(x) - \beta(x)|, |\alpha^{-1}(x) - \beta^{-1}(x)| < \delta, \quad x \in \Bb    
\end{equation}
 there exist mappings $H^s_x: E_x^s \to F_x \subset E_x^u$, $H^u_{x}: E_{\alpha(x)}^u \to A(x)E_x^s \subset E_{\alpha(x)}^s$ that depend uniformly continuously on $x$ such that
    $
    \|H^s_x\|, \|H^u_x\| < \varepsilon
    $
    and the pair $(\beta, A)$ satisfies uniformly continuous cocycle generalized hyperbolicity with constants $(C_1, \lambda_1)$ and spaces 
\begin{align*}
    \tilde{E}_x^s &= (Id + H^s_x)E^s_x = \{v+H^s_x v: v \in E^s_x\},\\
        \tilde{E}_{\beta(x)}^u &= (Id + H^u_x)E^u_{\alpha(x)} = \{v+H^u_x v: v \in E^u_{\alpha(x)}\}.    
\end{align*}
There exists $\varepsilon_0$ such that if $\varepsilon < \varepsilon_0$ then 
\begin{equation}\label{eq:tildeE-EF}
    \tilde{E}^s_x \oplus A^{-1}(x)\tilde{E}^u_{\beta(x)} = E^s_x \oplus F_x.
\end{equation}
\end{lem}

\begin{proof}
The proof uses ideas similar to those in Lemma \ref{lem:CL-robust-Ak}. We highlight the main differences. 
    
    First, we make a construction of $\tilde{E}^s_x$. Fix $\varepsilon >0$, $\lambda_1 \in (\lambda, 1)$. Denote $F_x = A(x)^{-1}E^u_{\alpha(x)}$

\begin{minipage}{0.49\textwidth}
\begin{align*}
A_x^{ss}: E_x^s \to \Bb, & \quad  A_x^{ss} = P_{\alpha(x)}A(x),\\
A_x^{su}: E_x^u \to \Bb, & \quad  A_x^{su} = P_{\alpha(x)}A(x),\\
A_x^{us}: E_x^s \to \Bb, & \quad  A_x^{us} = Q_{\alpha(x)}A(x),\\
A_x^{uu}: E_x^u \to \Bb, & \quad  A_x^{uu} = Q_{\alpha(x)}A(x),
\end{align*}   
\end{minipage}
\hfill
\begin{minipage}{0.49\textwidth}
\begin{align*}
B_x^{ss}: E_x^s \to \Bb, & \quad  B_x^{ss} = P_{\beta(x)}A(x),\\
B_x^{su}: E_x^u \to \Bb, & \quad  B_x^{su} = P_{\beta(x)}A(x),\\
B_x^{us}: E_x^s \to \Bb, & \quad  B_x^{us} = Q_{\beta(x)}A(x),\\
B_x^{uu}: E_x^u \to \Bb, & \quad  B_x^{uu} = Q_{\beta(x)}A(x),
\end{align*}
\end{minipage}

\smallskip 
According to (C-GH2) 
\begin{equation}\label{eq:Asux}
A^{us}_x = 0.
\end{equation}

Since $P_x, Q_x$ depend uniformly continuously on $x$, for any $\varepsilon_1 \in (0, 1)$ there exists $\delta > 0$ such that, if $\beta$ satisfies \eqref{eq:al-be-ep}, then
\begin{equation}\label{eq:AB-ep1}
\|A_x^{ss} - B_x^{ss}\|, \|A_x^{us} - B_x^{us}\|,\|A_x^{su} - B_x^{su}\|,\|A_x^{uu} - B_x^{uu}\| < \varepsilon_1, \quad x \in \Bb.   
\end{equation}
Define $J_x: E^u_{\alpha(x)} \to E^u_{\beta(x)}$ as $J_x := Q_{\beta(x)}|_{E^u_{\alpha(x)}}$, then $J_x$ is an isomorphism, and 
\begin{equation}\notag
|J_x v_1 - v_1| \leq \varepsilon_1 |v_1|, \; |J^{-1}_x v_2 - v_2| \leq \varepsilon_1 |v_2|, \quad x \in \Bb, v_1 \in E^u_{\alpha(x)}, v_2 \in E^u_{\beta(x)}.
\end{equation}

In the following we show that for any $\lambda_2 \in (\lambda, \lambda_1)$ there exists $C_2 > 0$ and small enough~$\varepsilon_1$ such that $B^{ss}$ and $B^{uu}$ satisfy 
\begin{itemize}
    \item[\textbf{(B1)}] 
        $
    \|B^{ss}_{\beta^{n-1}(x)} \dots B^{ss}_{\beta(x)}B^{ss}_x\| \leq C_2\lambda_2^n, \quad n \geq 0, x \in \Bb,
    $
\item[\textbf{(B2)}] $B^{uu}_{x}$ has the right inverse $Z_x: E_{\beta(x)}^u \to F_x$ such that 
\begin{equation}\label{eq:Zx-cond}
B^{uu}_{x} Z_x  = Id|_{E_{\beta(x)}^u}, \|Z_x\| \leq (1+\varepsilon_1)R,
\end{equation}
\begin{equation}\label{eq:Zx-exp}
    \|Z_{\beta^{-n}(x)} \dots Z_{\beta^{-2}(x)}Z_{\beta^{-1}(x)}\| \leq C_2\lambda_2^n, \quad n \geq 0, x \in \Bb.
\end{equation}

\end{itemize}

The proof of \textbf{(B1)} is similar to the argument in \eqref{eq:GH3Bs}. Fix $\lambda_2$ and choose $N$ so that $2C\lambda^N \leq \lambda_2^N$. Set $C_2=(R(C+2)/\lambda_2)^N$.
Note that for any $\delta_1 > 0$ there exists $\delta > 0$ such that, whenever $\beta$ satisfies \eqref{eq:al-be-ep}, it holds
$
\dist(\beta^n(x), \alpha^n(x)) \leq \delta_1
$
, $(n \in [1, N])$.
For small enough~$\delta_1$ we have
$$
\|B^{ss}_{\beta^{N-1}(x)} \dots B^{ss}_{\beta(x)}B^{ss}_x\| \leq 2\|A^{ss}_{\alpha^{N-1}(x)} \dots A^{ss}_{\alpha(x)}A^{ss}_x\|.
$$
Similarly to \eqref{eq:B-proof} we conclude \textbf{(B1)}.

The proof of \textbf{(B2)} is similar to the argument in \textbf{(B1)} and \eqref{eq:GH3Bs-inv}. Choosing $N$ as in \textbf{(B1)}, reducing additionally $\delta_1$ we may assume that 
$
\dist(\beta^{-n}(x), \alpha^{-n}(x)) \leq \delta_1
$, $(n \in [1, N])$.
Define $Z_x = A(x)^{-1}J_x^{-1}$, which satisfies \eqref{eq:Zx-cond}. As in \textbf{(B1)}, we conclude that 
$$
\|Z_{\beta^{-N}(x)} \dots Z_{\beta^{-2}(x)}Z_{\beta^{-1}(x)}\| \leq 2 \|A^{-1}_{\alpha^{-N}(x)} \dots A^{-1}_{\alpha^{-2}(x)}A^{-1}_{\alpha^{-1}(x)}|_{E_x^u}\| \leq 2C\lambda^N \leq \lambda_2^N.
$$
For arbitrary $n\geq0$, write $n=lN+r$, where $l\geq0$ and $0\leq r<N$. 
Splitting the product into blocks of length $N$ and using $\|Z_x\|\leq(1+\varepsilon_1)R$, we obtain \eqref{eq:Zx-exp}.
Item \textbf{(B2)} is proved.

In the following, we construct a map $H_x^s:E_x^s \to F_x$ with the following property: for any $v^s \in E^s_x$ there exists $w^s \in E_{\beta(x)}^s$ such that 
\begin{equation}\label{eq:Hsx-cond1}
A(x)(v^s+ H^s_xv^s) = w^s+ H^s_{\beta(x)}(w^s)
\end{equation}
For an arbitrary map $H_x^s$ we can represent $A(x)(v^s+ H^s_x(v^s)) = w^s + w^u$, where 
\begin{equation*}
    w^s = B^{ss}_x v^s + B^{su}_x H^s_xv^s \in E^s_{\beta(x)}, \quad
    w^u = B^{us}_x v^s + B^{uu}_x H^s_xv^s \in E^u_{\beta(x)}.
\end{equation*}
Condition \eqref{eq:Hsx-cond1} is equivalent to $w^u = H^s_{\beta(x)}w^s$, i.e.,
\begin{equation}\label{eq:H-via-B}
B^{uu}_x H^s_x = H_{\beta(x)}^s(B^{ss}_x + B^{su}_xH^s_x) - B^{us}_x.
\end{equation}

To satisfy \eqref{eq:H-via-B} it is sufficient to verify the following (compare with \eqref{eq:StHk})
\begin{equation}\label{eq:H-via-Z}
H_x^s = Z_x H^s_{\beta(x)} B_x^{ss} + (Z_x H^s_{\beta(x)}B^{su}_xH^s_x  - Z_xB^{us}_x).    
\end{equation}
As in Section \ref{sec:robust-Ak}, we consider the following auxiliary problem. Given a family of bounded linear operators $S_x:E^s_x \to F_x$ that depends uniformly continuously on $x$, we seek a family of bounded linear operators $H_x: E^s_x \to F_x$ that depends uniformly continuously on $x$, which satisfies 
$
H_x = Z_xH_{\beta(x)}B_x^{ss} + S_x
$ 
(compare to \eqref{eq:HkSk}).

Denote by $U$ the Banach space of uniformly bounded linear operators $S_x : E^s_x \to F_x$ that depend uniformly continuously on $x$.

Define an operator $F: U \to U$ by $F(S_x) = H_x$, where
\begin{equation}\notag
H_x = S_x + \sum_{l = 1}^{+\infty}Z_x\cdot \ldots \cdot Z_{\beta^{l-1}(x)}S_{\beta^{l}(x)}B_{\beta^{l-1}(x)}^{ss} \cdot \ldots \cdot B_x^{ss}.    
\end{equation}
Similarly to \eqref{eq:FL'} the following holds
\begin{equation}\label{eq:FL'cont}
\|F(S_x)\| \leq \|S_x\| \left(1+ \sum_{l=1}^{\infty} C_2^2\lambda_2^{2l} \right) = \|S_x\|\left(1+ C_2^2 \frac{\lambda_2^2}{1-\lambda_2^2}\right) = L'_2 \|S_x\|.        
\end{equation}

Since $\beta$ and $\beta^{-1}$ are uniformly continuous, the right-hand side depends uniformly continuously on $x$, and hence $H_x \in U$.

Define the nonlinear operator $G:U \to U$ by $G(H_x) = S_x$, where
$$
S_x = Z_x H^s_{\beta(x)}B^{su}_xH^s_x  - Z_x  B^{us}_x.
$$
Similarly to Section \ref{sec:robust-Ak}, for small enough $\delta > 0$, $\varepsilon_2 > 0$ and $\|H_x\|, \|H'_x\| \leq \varepsilon_2$ we have
$
\|GH_x-GH'_x\| \leq \frac{1}{2L_2'}\|H_x-H'_x\|.
$
Since $F$ is linear, inequality \eqref{eq:FL'cont} implies that
\begin{equation}\notag
\|F \circ GH - F \circ GH'\| \leq L_2'\frac{1}{2L_2'}\|H-H'\| = \frac{1}{2}\|H-H'\|,    
\end{equation}
According to \eqref{eq:Asux}, \eqref{eq:AB-ep1} and \textbf{(B2)} we have inequalities $\|G(H_x = 0)\| = \|Z_xB^{us}_x\| \leq CR\varepsilon_1$ and 
$
\|F \circ G (H_x = 0)\|\leq L_2'CR\varepsilon_1
$.
Hence, there exists $H_x \in U$ with $\|H_x\| \leq 2L_2'CR\varepsilon_1$ such that 
$F\circ G (H_x) = H_x$, which implies \eqref{eq:H-via-Z}. Consequently, \eqref{eq:H-via-B} holds.

Proceeding as in \eqref{eq:B-proof}, by choosing $\varepsilon_1$ sufficiently small and, if necessary, decreasing $\delta$, we obtain generalized hyperbolicity estimates with exponent $\lambda_1$ for spaces $\tilde{E}_x^s$. 

\medskip

Now we carry out the corresponding construction for $\tilde{E}^u_x$. Here, the stable and unstable cases are genuinely asymmetric, more so than in Lemma~\ref{lem:CL-robust-Ak}: there the operators are perturbed while the index shift $k \to k+1$ is common to $\mathcal{A}$ and its perturbation, whereas here the cocycle $A$ is fixed and the base map is perturbed, $\alpha \to \beta$. Time-reversal $A \to A^{-1}$ therefore does not turn the perturbed system into a base-only perturbation of the reversed original (its fiber map is sampled at $\beta^{-1}(x)$, not $\alpha^{-1}(x)$), so $H^s_x$ and $H^u_x$ act between genuinely different spaces and cannot be related by symmetry; this is what forces the auxiliary splitting at $\alpha(\beta^{-1}(x))$ introduced below.

In order to define $H^u_{x}: E_{\alpha(x)}^u \to A(x)E_x^s$ we need to establish an analog of \eqref{eq:Hsx-cond1}, i.e. to show that for any $\tilde{v}^u \in \tilde{E}^u_{\beta(x)}$ there exists $\tilde{w}^u \in \tilde{E}^u_{x}$ such that
$
A(x)\tilde{w}^u = \tilde{v}^u
$,
equivalently, for any $v^u \in E^u_{\alpha(x)}$ there exists $w^u \in E^u_{\alpha(\beta^{-1}(x))}$ such that
\begin{equation}\label{eq:cond-Eu}
w^u + H_{\beta^{-1}(x)}^uw^u =  A^{-1}(x)(v^u + H_x^uv^u).
\end{equation}
Note that 
$$
H_{\beta^{-1}(x)}^uw^u \in A(\beta^{-1}(x))E^s_{\beta^{-1}(x)} \subset E^s_{\alpha(\beta^{-1}(x))},
\quad
H_{x}^uv^u \in A(x)E^s_{x} \subset E^s_{\alpha(x)}.
$$

For small enough $\delta$ we have the splitting $\mathbb{B} = E^s_{\alpha(\beta^{-1}(x))} \oplus E^u_{\alpha(\beta^{-1}(x))}$. Denote by $\tilde{P}_x$, $\tilde{Q}_x$ the projections corresponding to this splitting.
Similarly to the construction of $\tilde{E}^s_x$ define
\begin{align*}
{C}_x^{ss}: E_{\alpha(x)}^s \to \Bb, & \quad  C_x^{ss} = \tilde{P}_{x}A^{-1}(x); &\qquad
{C}_x^{su}: E_{\alpha(x)}^u \to \Bb, & \quad  C_x^{su} = \tilde{P}_{x}A^{-1}(x);\\
{C}_x^{us}: E_{\alpha(x)}^s \to \Bb, & \quad  C_x^{us} = \tilde{Q}_{x}A^{-1}(x); & \qquad
{C}_x^{uu}: E_{\alpha(x)}^u \to \Bb, & \quad  C_x^{uu} = \tilde{Q}_{x}A^{-1}(x).
\end{align*}
Note that since we considered the decomposition of $A^{-1}$, the operator $C_x^{ss}$ is ``expanding'' and the operator $C_x^{uu}$ is ``contracting''.
Equation \eqref{eq:cond-Eu} is equivalent to
$$
w^u = {C}^{uu}_x v^u + C^{us}_x H_{x}^uv^u, 
\quad
H_{\beta^{-1}(x)}^uw^u = {C}^{su}_x v^u + {C}^{ss}_x H_{x}^uv^u.
$$
Hence, the condition on $H^u$ is
\begin{equation}\label{eq:H-Eu-cond-1}
H_{\beta^{-1}(x)}^u \circ \left( {C}^{uu}_x  + C^{us}_x H_{x}^u \right) = {C}^{su}_x + {C}^{ss}_x H_{x}^u.    
\end{equation}
Similarly to \eqref{eq:Zx-cond} define the operator $\tilde{Z}_x:E^s_{\alpha(\beta^{-1}(x))} \to A(x)E^s_x$ such that $\tilde{Z}_x \circ C^{ss}_x = Id_{A(x)E^s_x}$. Condition \eqref{eq:H-Eu-cond-1} is equivalent to
\begin{equation}\label{eq:H-Eu-cond-2}
H_{x}^u = \tilde{Z}_x  H_{\beta^{-1}(x)}^u {C}^{uu}_x + \tilde{Z}_x  H_{\beta^{-1}(x)}^u  C^{us}_x H_{x}^u -\tilde{Z}_x C^{su}_x.
\end{equation}
Exactly as for \textbf{(B1)} and \eqref{eq:Zx-exp}, but using (C-GH3) for $A^{-1}$ in place of the estimates for $A$, for small enough $\varepsilon_1$ and some $C_2 > 0$ the contracting block $C^{uu}$ and the right inverse $\tilde{Z}_x$ satisfy
\begin{equation}\notag
\|C^{uu}_{\beta^{-(n-1)}(x)} \cdots C^{uu}_{\beta^{-1}(x)}C^{uu}_x\| \leq C_2\lambda_2^n, \quad \|\tilde{Z}_x \tilde{Z}_{\beta^{-1}(x)} \cdots \tilde{Z}_{\beta^{-(n-1)}(x)}\| \leq C_2\lambda_2^n, \quad n \geq 0,\ x \in \Bb.
\end{equation}
Hence \eqref{eq:H-Eu-cond-2} is the analog of \eqref{eq:H-via-Z} with $\beta, B^{ss}_x, B^{su}_x, B^{us}_x, Z_x$ replaced by $\beta^{-1}, C^{uu}_x, C^{us}_x, C^{su}_x, \tilde{Z}_x$, and the fixed-point argument for $H^s_x$ applies verbatim: for any $\varepsilon > 0$ and small enough $\delta$ there is an operator $H^u_x$ satisfying \eqref{eq:H-Eu-cond-2} with $\|H^u_x\| < \varepsilon$.

The spaces $\tilde{E}^u_x$ satisfy the exponential estimates with exponent $\lambda_1$ as for $\tilde{E}^s_x$, via \eqref{eq:B-proof}.
As in the construction of $H^s_x$ we prove that $H^u_x$ depends uniformly continuously on~$x$.

Let us prove equality \eqref{eq:tildeE-EF}.
According to construction $\tilde{E}^s_x \subset E^s_x \oplus F_x$ and
\begin{align*}
    A^{-1}(x)\tilde{E}^u_{\beta(x)} & \subset A^{-1}(x)E^u_{\alpha(x)} \oplus A^{-1}(x)A(x)E^s_x = E^s_x \oplus F_x.
\end{align*}
Hence,
\begin{equation}\label{eq:tildeE-EF-1}
    \tilde{E}^s_x \oplus A^{-1}(x)\tilde{E}^u_{\beta(x)} \subset E^s_x \oplus F_x.
\end{equation}
For any $x \in \mathbb{B}$ and $v^s_x \in E^s_x$ define the vectors 
$$
v^u_x := H^s_xv^s_x \in F_x = A^{-1}(x)E^u_{\alpha(x)}, \quad v^u_{\alpha(x)} := A(x)v^u_x \in E^u_{\alpha(x)}, \quad v^s_{\alpha(x)} := H^u_x v^u_{\alpha(x)} \in A(x)E_x^s.
$$ 
Then $v^s_x + v^u_x \in \tilde{E}^s_x$ and  $v^u_{\alpha(x)} + v^s_{\alpha(x)} \in \tilde{E}^u_{\beta(x)}$. Hence,
$$
v^s_x - A^{-1}(x) H^u_x A(x) H^s_x v^s_x = v^s_x + v^u_x - A^{-1}(x)(v^u_{\alpha(x)} + v^s_{\alpha(x)}) \in \tilde{E}^s_x \oplus A^{-1}(x)\tilde{E}^u_{\beta(x)}.
$$
Note that for $\varepsilon < 1/R$ the following holds
$$
A^{-1}(x) H^u_x A(x) H^s_x (E^s_x) \subset E^s_x, \quad \|A^{-1}(x) H^u_x A(x) H^s_x\| < 1,
$$ 
hence $Id - A^{-1}(x) H^u_x A(x) H^s_x$ is bijective on $E^s_x$. In particular
$
E^s_x \subset \tilde{E}^s_x \oplus A^{-1}(x)\tilde{E}^u_{\beta(x)}
$.
Similarly, we show
$
F_x = A^{-1}(x)E^u_{\alpha(x)} \subset \tilde{E}^s_x \oplus A^{-1}(x)\tilde{E}^u_{\beta(x)}
$,
and hence 
$
E^s_x \oplus F_x \subset \tilde{E}^s_x \oplus A^{-1}(x)\tilde{E}^u_{\beta(x)}
$,
which together with inclusion \eqref{eq:tildeE-EF-1} implies \eqref{eq:tildeE-EF}.
\end{proof}

\begin{proof}[Proof of Theorem \ref{thm:SS}]
Assume that $f$ satisfies uniformly continuous generalized hyperbolicity with constants $C > 0$, $\lambda \in (0, 1)$. Then the pair $(f, Df)$ satisfies uniformly continuous cocycle generalized hyperbolicity.
Let $\varepsilon_0$ be as in Lemma~\ref{lem:SS2}. Choose $\varepsilon_1 \in (0,\varepsilon_0)$ and $\lambda_1\in(\lambda,1)$, and let $C_1>1$ and $\delta>0$ be the corresponding constants.
Let $R > 1$ be the constant in \eqref{eq:C1-2}, $L = L(C_1, \lambda_1)>0$ be the constant provided by Lemma \ref{lem:SS1}. Set $\varepsilon=\frac{1}{3L}$. Choose $d_0 < \min(\frac{\delta}{R}, \frac{1}{3L})$, such that \eqref{eq:s-Lip} holds with this $\varepsilon$ for $|v_1|, |v_2| < 2Ld_0$. 
Fix $d\in(0,d_0)$ and $g\in\DUC(\Bb)$ satisfying $\|g-f\|_{C^1}<d$.
Denote $\Delta = g - f$. 
Then
    \begin{equation}\label{eq:Delta-c1}
        |\Delta(x)| < d, \quad \|D\Delta(x)\| < d, \quad x \in \Bb. 
    \end{equation}
Moreover, for every $x\in\Bb$, setting $y=g^{-1}(x)$, we have
\[
|f^{-1}(x)-g^{-1}(x)|
=
|f^{-1}(g(y))-f^{-1}(f(y))|
\leq R|g(y)-f(y)|
<Rd<\delta.
\]
Together with $\sup_x|f(x)-g(x)|<d<\delta$, this verifies
\eqref{eq:al-be-ep}.
    
By Lemma \ref{lem:SS2}, the pair $(g, Df)$ satisfies uniformly continuous cocycle generalized hyperbolicity with constants~$C_1, \lambda_1$. 

Denote the splitting corresponding to the pair $(f, Df)$ by 
$$
\Bb = E^s_x \oplus E^u_x, \quad F_x = (Df(x))^{-1}E^u_{f(x)} \subset E^u_x
$$
and splitting provided by Lemma \ref{lem:SS2} corresponding to pair $(g, Df)$ by 
\begin{equation}\notag
\Bb = \tilde{E}^s_x \oplus \tilde{E}^u_x, \quad \tilde{F}_x = (Df(x))^{-1}\tilde{E}^u_{g(x)} \subset \tilde{E}^u_x.
\end{equation}
\begin{remark}
We do not claim that the pair $(g, Dg)$ satisfies uniformly continuous cocycle generalized hyperbolicity, although we believe it to be true. 
\end{remark}
According to \eqref{eq:tildeE-EF} the following equality holds
$
\tilde{E}^s_x \oplus \tilde{F}_x = {E}^s_x \oplus {F}_x.
$
Hence
$$
\{v \in X: v(x) \in \tilde{E}_x^s \oplus \tilde{F}_x\} = Y.
$$

By Lemma \ref{lem:SS1} there exist operators $O_f, O_g: X \to Y$
satisfying 
$$
T_{f, Df}O_f = Id_X, \; \|O_f\| \leq L,
\quad
T_{g, Df}O_g = Id_X, \; \|O_g\| \leq L.
$$
Denote $M = 2L$.

\textbf{Step 1.} First, we find a map $h_1: \Bb \to \Bb$, $h_1 \in Y$, satisfying \eqref{eq:conj-h1}.
Substituting $g = f + \Delta$ in equation \eqref{eq:conj-h1} and using the definition of $s_f$ we get
    $$
    f(x) + Df(x)h_1(x) + s_f(x, h_1(x)) + \Delta(x+h_1(x)) = f(x) + h_1(f(x)), \quad x \in \Bb.
    $$
    Canceling $f(x)$ and regrouping terms we get
\begin{equation}\label{eq:conj-final}
        h_1(f(x)) = Df(x)h_1(x) + s_f(x, h_1(x)) + \Delta(x+h_1(x)), \quad x \in \Bb.    
\end{equation}
In order for \eqref{eq:conj-final} to hold, it suffices that
\begin{equation}
    h_1(x) = O_f(s_f(x, h_1(x)) + \Delta(x+h_1(x))).
\end{equation}    
Consider the (nonlinear) operator $G_f:BUC(\Bb, \Bb) \to BUC(\Bb, \Bb)$ defined by
$$
G_f(h_1)(x) = s_f(x, h_1(x)) + \Delta(x+h_1(x)), \quad x \in \Bb.
$$
Using \eqref{eq:s-Lip} and \eqref{eq:Delta-c1} we conclude that for $|h_1|, |h'_1| \leq 2L d$ the following inequalities hold 
\begin{multline*}
|(G_f(h_1)-G_f(h'_1))(x)| = |s_f(x, h_1(x)) - s_f(x, h'_1(x)) + \Delta(x+h_1(x)) - \Delta(x+h'_1(x))| \leq \\
(\varepsilon+d)|h_1(x) - h'_1(x)| \leq \frac{2}{3L}\|h_1 - h'_1\|.
\end{multline*}
Using \eqref{eq:Sf} we conclude that 
$
\|O_f \circ G_f(h_1) - O_f \circ G_f(h'_1)\| \leq \frac{2}{3}\|h_1-h'_1\|,
$
hence $O_f \circ G_f$ is a contraction. For $\|h_1\| \leq 2Ld$, equations \eqref{eq:s-Lip}, \eqref{eq:Delta-c1} imply $\|G_f(h_1)\| \leq \varepsilon 2Ld + d$, and
$$
\|O_f \circ G_f(h_1)\| \leq L(\varepsilon \cdot 2Ld + d) =  \left(\frac{2L}{3}+L\right)d \leq 2Ld.
$$
The latter implies existence and uniqueness of a map $h_1$ with $\|h_1\| \leq 2Ld$ satisfying $O_f \circ G_f (h_1) = h_1$,
and hence \eqref{eq:conj-h1}.  
Therefore $h_1$ satisfies \eqref{eq:h1h2leqMdelta}. Note that $\mbox{Im }O_f = Y$
and hence $h_1 \in Y$. 

\textbf{Step 2.} To construct a map $h_2 \in Y$, satisfying \eqref{eq:conj-h2} we follow an approach similar to that of Step 1. For completeness, we provide the full proof below. 
The conjugacy \eqref{eq:conj-h2} 
    is equivalent to
    $$
    f(x+h_2(x)) = f(x) + \Delta(x) + h_2(g(x)), \quad x \in \Bb.
    $$
    Using definition of $s_f$ and canceling $f(x)$ we get
\begin{equation}\label{eq:conj-final-g}
        h_2(g(x)) = Df(x)h_2(x) + s_f(x, h_2(x)) + \Delta(x), \quad x \in \Bb.    
\end{equation}
In order for \eqref{eq:conj-final-g} to hold, it suffices that
\begin{equation}
    h_2(x) = O_g(s_f(x, h_2(x)) + \Delta(x)).
\end{equation}    
Consider the (nonlinear) operator $G_g:BUC(\Bb, \Bb) \to BUC(\Bb, \Bb)$ defined by
$$
G_g(h_2)(x) = s_f(x, h_2(x)) + \Delta(x), \quad x \in \Bb.
$$
Using \eqref{eq:s-Lip} and \eqref{eq:Delta-c1} we conclude that for $|h_2|, |h'_2| \leq 2L d$ the following inequalities hold 
\begin{multline*}
|(G_g(h_2)-G_g(h'_2))(x)| = |s_f(x, h_2(x)) - s_f(x, h'_2(x))| \leq 
\varepsilon|h_2(x) - h'_2(x)| \leq \frac{1}{3L}\|h_2 - h'_2\|.   
\end{multline*}
Using \eqref{eq:Sf} we conclude that 
$
\|O_g \circ G_g(h_2) - O_g \circ G_g(h'_2)\| \leq \frac{1}{3}\|h_2-h'_2\|
$
and hence $O_g \circ G_g$ is a contraction. If $\|h_2\| \leq 2Ld$ then  \eqref{eq:s-Lip} and \eqref{eq:Delta-c1} imply $\|G_g(h_2)\| \leq \varepsilon 2Ld + d$, and 
$$
\|O_g \circ G_g(h_2)\| \leq L(\varepsilon \cdot 2Ld + d)  = \left(\frac{2L}{3}+L\right)d \leq 2Ld.
$$
The latter implies existence and uniqueness of a map $h_2$ with $\|h_2\| \leq 2Ld$ satisfying
$
O_g \circ G_g (h_2) = h_2,
$
and hence \eqref{eq:conj-final-g}. Therefore, $h_2$ satisfies \eqref{eq:conj-h2} and \eqref{eq:h1h2leqMdelta}.  
Note that $\mbox{Im }O_g = Y$, hence $h_2 \in Y$. This proves Theorem~\ref{thm:SS}\,(\ref{item:SS-semi}).

\textbf{Step 3.} Under the additional assumption in Theorem~\ref{thm:SS}\,(\ref{item:SS-full}), we prove \eqref{eq:Idh-homeo}.
From \eqref{eq:conj-h1} and \eqref{eq:conj-h2} we obtain the following.
\begin{equation}\label{eq:selff}
f \circ (Id+h_2) \circ (Id+h_1) = (Id+h_2) \circ (Id+h_1) \circ f,   
\end{equation}
\begin{equation}\label{eq:selfg}
g \circ (Id+h_1) \circ (Id+h_2) = (Id+h_1) \circ (Id+h_2) \circ g.
\end{equation}
Since $h_1, h_2 \in Y$ and the spaces $E^s_x \oplus (Df(x))^{-1}E^u_{f(x)} = E^s_x \oplus F_x$ do not depend on $x$, for some functions $h_3, h_4 \in Y$ the following hold
$$
(Id+h_2) \circ (Id+h_1) = Id + h_3, \quad  (Id+h_1) \circ (Id+h_2) = Id + h_4. 
$$
\begin{equation}\label{eq:h3h4-cond}
h_3(x), h_4(x) \in E^s_x \oplus F_x, \quad |h_3(x)|, |h_4(x)| \leq 2Md, \quad x \in \Bb.
\end{equation}
By \eqref{eq:selff} the identity   
$
f\circ (Id+h_3) = (Id+h_3) \circ f
$ holds.
Arguing as in Step~1, we conclude that
$$
h_3(f(x)) - Df(x)h_3(x) = s_f(x, h_3(x)).
$$
Since $h_3 \in Y$, Lemma \ref{lem:SS1} implies that 
\begin{equation}\label{eq:h3-fixed}
h_3 = O_f(s_f(x, h_3(x))).
\end{equation}
Similarly to the operator $O_f \circ G_f$, the map $h_3 \to O_f(s_f(x, h_3(x)))$ is a contraction (decreasing $d$ if necessary) on the set of $h_3$ with $\|h_3\| \leq 2Md$. Therefore, equation \eqref{eq:h3-fixed} has a unique solution for $h_3$ satisfying \eqref{eq:h3h4-cond}. On the other hand, $h_3 \equiv 0$ satisfies both \eqref{eq:h3h4-cond} and \eqref{eq:h3-fixed}. By uniqueness $h_3 \equiv 0$.

The argument for $h_4$ is analogous. Equation \eqref{eq:selfg} implies that $g \circ (Id+h_4) = (Id+h_4) \circ g$.
Arguing as in Step~2, we obtain
$$
h_4(g(x)) - Df(x)h_4(x) = s_f(x, h_4(x)) + \left( \Delta(x+h_4(x)) - \Delta(x) \right).
$$

As in the proof for $h_3$, this equation has a unique solution $h_4$, satisfying \eqref{eq:h3h4-cond}. Hence, $h_4 \equiv 0$. This completes the proof of \eqref{eq:Idh-homeo} and Theorem \ref{thm:SS}\,(\ref{item:SS-full}).
\end{proof}
\begin{remark}
Note that in Step 3 we strongly used that $E^s_x\oplus F_x = \tilde{E}^s_x\oplus \tilde{F}_x$ and hence $h_1$ and $h_2$ are unique in the same space $Y$.  This required special attention in Lemma \ref{lem:SS2} to establish equality \eqref{eq:tildeE-EF}. The additional condition in Theorem~\ref{thm:SS}\,(\ref{item:SS-full}) was used only to establish the inclusions~\eqref{eq:h3h4-cond}.
\end{remark}

\section{Discussion and open questions}\label{sec:discussion}

\textbf{Structural stability.} The main open problem is whether
generalized hyperbolicity alone implies structural stability, as proposed in Conjecture~\ref{conj:SS-general}. 
The proof of Theorem~\ref{thm:SS} relies on a bounded right inverse for the operator $T_{\alpha,A}$ defined in \eqref{eq:Twv}. To address Conjecture~\ref{conj:SS-general}, it is natural to seek such an inverse on the space of bounded continuous functions; linearity is not essential, and a possibly nonlinear Lipschitz right inverse would suffice.
\begin{conj}\label{conj:inv-cocycle}
Let the pair $(\alpha,A)$ satisfy cocycle generalized hyperbolicity
with constants $C,\lambda$, with no continuity assumption on the
projections $P_x,Q_x$. Let
$
X=C_b(\Bb,\Bb)
$
be the Banach space of bounded continuous functions with the supremum
norm, and let $T_{\alpha,A}:X\to X$ be defined by \eqref{eq:Twv}.
Then there exist a constant $L=L(\lambda,C)$ and a not necessarily
linear map $S_{\alpha,A}:X\to X$ such that
$
T_{\alpha,A}\circ S_{\alpha,A}=Id_X,
$
and $S_{\alpha,A}$ is Lipschitz with constant $L$.
\end{conj}
When the projections $P_x,Q_x$ are discontinuous, the expression \eqref{eq:Svdef} need not define a continuous function. However, we believe that the argument can be modified by using the continuity of $A(x)$. In this paper, we overcame a similar difficulty in the proof of Lipschitz shadowing.

\smallskip

\textbf{Exponential dichotomy, transversality, and generalized hyperbolicity.} For finite-dimensional sequences of linear isomorphisms,
generalized hyperbolicity coincides with the existence of exponential
dichotomies on $\Zz^+$ and $\Zz^-$ together with the transversality
condition \cite{PilCL,Pli77}. In infinite dimensions, the implication from exponential dichotomies on $\Zz^\pm$ together with transversality to generalized hyperbolicity is considerably more subtle, and we single it out.

\begin{quest}\label{quest:ED}
    When do exponential dichotomies on $\Zz^{\pm}$ together with the transversality condition imply generalized hyperbolicity?
\end{quest}

We expect that in a Hilbert space Pilyugin's construction \cite{PilCL} can be adapted by gluing the two half-line splittings at $k=0$ and replacing the finite-dimensional angle estimate by an open-mapping argument. In a general Banach space, such a construction would require a bounded projection onto
$
D=E^{s,+}_0\cap E^{u,-}_0,
$
which may fail to exist even when the two intersecting subspaces are complemented. This is a continuous-selection obstruction related to the Bartle--Graves and Michael selection theorems
\cite{BartleGraves1952,Michael1956I,Michael1956II}.

\smallskip

\textbf{Converse implication.} 
On compact finite-dimensional manifolds, Lipschitz shadowing implies generalized hyperbolicity by \cite{TikhLipSh} and Theorem~\ref{stat:finite-dimensional-characterization}. We ask whether the same implication holds in Banach spaces. Following \cite{TikhLipSh,TikhHolSh}, we expect Lipschitz shadowing to imply the bounded solution property, with a uniform constant, for the derivatives along trajectories. 
The main difficulty is the converse linear implication: whether the bounded solution property implies generalized hyperbolicity.

A natural first step is to assume that $\Bb$ is a Hilbert space. 
Let $\{A_k:\Bb\to\Bb\}$ be a sequence of bounded linear isomorphisms satisfying the bounded solution property with constant $L$. 
Following a Gram--Schmidt-type construction, consider subspaces $E_k\subset\Bb$ and operators $B_k:E_k\to E_{k+1}$ defined by
$$
E_{k+1} = \left(A_k(E_k^{\perp})\right)^{\perp}, \quad B_k = P_{E_{k+1}}A_k.
$$
The finite-dimensional argument of \cite[Lemma~4.2, Theorem~2.3]{TikhHolSh} yields generalized hyperbolicity, but the resulting constant $C$ tends to infinity and $\lambda$ tends to $1$ as the dimensions of the spaces $E_k$ increase. 
The main challenge is to obtain dimension-free estimates that extend to an infinite-dimensional Hilbert space.

In general Banach spaces there is an additional difficulty: it is not known whether the bounded and strong bounded solution properties are equivalent. 
As in Question~\ref{quest:ED}, this is a continuous-selection problem related to the Michael and Bartle--Graves theorems \cite{BartleGraves1952,Michael1956I,Michael1956II}. 
For Lipschitz selection results, see \cite{BorweinDontchev2003,Messerschmidt2019}.

\smallskip

\textbf{Flows and semiflows in Banach spaces.}
An analog of generalized hyperbolicity can be defined for flows on Banach spaces, although we do not develop it here. One difference from diffeomorphisms is that the relevant dimensions may change at fixed points and closed trajectories; the absence of continuity assumptions on $P_x,Q_x$ allows such changes to be incorporated. A closely related structure appears in~\cite{ACK2025}: for Morse--Smale semigroups on a Hilbert space, compatible subbundles over the global attractor yield Lipschitz shadowing on the attractor, H\"older shadowing nearby, and structural stability of the attractor. See also~\cite{BackesDragicevic2021}.

In finite dimensions, the relations among shadowing, structural stability, and hyperbolicity for flows are well developed
\cite{Pil97,Robinson1974,Hayashi1997,GanWen2006,Tikh2008,PalmPilTikh2012,GanLiTikh2016,Li2020,WenBook2016}.
Flows also exhibit phenomena absent for diffeomorphisms
\cite{GuckenheimerWilliams1979,MoralesPacificoPujals2004,PilTikh2008,PilTikh2010,Tikh2015,Murakami2023};
see \cite{FisherHasselblatt} for an overview.
For infinite-dimensional flows, shadowing is more subtle
\cite{Henry94,CLP1989}.
Stronger results are available in the presence of inertial manifolds
\cite{SellYou,FST1988,Ragazzo1994,Robinson2001,Llave2009},
while additional difficulties arise without them
\cite{ACH1992,CLR2012,ACK2025};
see also the shadowing-based approach to a posteriori estimates in
\cite{ApushRepinTikh}.

Semilinear parabolic PDEs and delay differential equations are commonly described by nonlinear semiflows $\varphi_t$ on $\Bb$
\cite{Pazy1983,Lunardi1995,Henry1981,Diekmann-vGils-VerduynLunel-Walther1995,Hale-VerduynLunel1993}.
Their images often belong to nested Banach spaces $\mathcal F_t\subset\Bb$; for example,
\cite{HochsRoberts2019},
\[
\varphi_{t_1}(\Bb)\subset\mathcal F_{t_1},
\qquad
\mathcal F_{t_2}\subset\mathcal F_{t_1},
\qquad t_2>t_1>0.
\]
These inclusions are compatible in spirit with {\rm(GH2)}, suggesting a possible extension of generalized hyperbolicity to non-reversible semiflows. A major additional difficulty is that the generator is typically densely defined and unbounded, particularly for arguments using~\eqref{eq:C1-r1}.

\section*{Appendix A}\label{app:BP-constant}

In this appendix, we give a self-contained proof of Theorem~\ref{thm:Bernandes-shadowing}, keeping track of the shadowing constant used in Lemma~\ref{lem:Bk-Inf}. The argument adapts the proof of \cite[Theorem 1]{NilsonPeris}: the key idea of gluing finitely-shadowing points by linear interpolation is theirs; the only additions are the bookkeeping of constants and the organization of the argument around a single error-halving step, which is then iterated. 

\begin{lem}\label{lem:halving}
Assume that $T$ has the finite Lipschitz shadowing property with constant $L$. Then for every $d>0$ and every $d$-pseudotrajectory $\{x_j\}_{j \in \Zz}$ of $T$ there exists a $\tfrac{d}{2}$-pseudotrajectory $\{x'_j\}_{j \in \Zz}$ of $T$ such that
$$
\|x_j - x'_j\| \le Ld, \qquad j \in \Zz.
$$
\end{lem}

\begin{proof}
Fix an integer $m \ge 4L$ and set
$
n_k := \frac{k(k-1)}{2}m
$,
so that $n_0 = n_1 = 0$ and $n_{k+1} - n_k = km$ for $k \ge 1$; the intervals $[n_k, n_{k+1})$, $k \ge 1$, partition $[0, +\infty) \cap \Zz$. For each $k \ge 1$ the restriction $\{x_j\}_{|j| \le n_{k+1}}$ is a finite $d$-pseudotrajectory, so by the finite Lipschitz shadowing property there is $u_k \in X$ with
\begin{equation}\label{eq:BP-uk}
\|x_j - T^j u_k\| \le Ld \qquad (|j| \le n_{k+1}).
\end{equation}
We interpolate linearly between consecutive shadowing points. For $j \ge 0$ write $j = n_k + i$ with $k \ge 1$ and $0 \le i < km$, and put
$$
x'_j := \Big(1 - \tfrac{i}{km}\Big) T^j u_k + \tfrac{i}{km}\, T^j u_{k+1};
$$
for $j < 0$ write $-j = n_k + i$ in the same way and put $x'_j := \big(1 - \tfrac{i}{km}\big) T^j u_k + \tfrac{i}{km}\, T^j u_{k+1}$.

\textbf{Displacement.} Let $j \ge 0$, $j = n_k + i$. Then $j \le n_{k+1} \le n_{k+2}$, so \eqref{eq:BP-uk} holds at $j$ both for $u_k$ and for $u_{k+1}$; as $x'_j$ is a convex combination of $T^j u_k$ and $T^j u_{k+1}$,
$$
\|x_j - x'_j\| \le \Big(1-\tfrac{i}{km}\Big)\|x_j - T^j u_k\| + \tfrac{i}{km}\,\|x_j - T^j u_{k+1}\| \le Ld.
$$
The case $j<0$ is identical, so $\|x_j - x'_j\| \le Ld$ for all $j \in \Zz$.

\textbf{Halved error.} Let $j \ge 0$, $j = n_k + i$ with $0 \le i < km$. If $i+1 \le km-1$, then $j+1$ lies in the same block and a direct computation gives
$$
Tx'_j - x'_{j+1} = \tfrac{1}{km}\big(T^{j+1}u_k - T^{j+1}u_{k+1}\big).
$$
If $i+1 = km$, then $j+1 = n_{k+1}$ is the first index of the next block, where $x'_{j+1} = T^{j+1}u_{k+1}$, and the same identity follows since $x'_j = \tfrac{1}{km}T^{j}u_k + \big(1-\tfrac{1}{km}\big)T^{j}u_{k+1}$ in this case. In either case $j+1 \le n_{k+1} \le n_{k+2}$, so \eqref{eq:BP-uk} applies at $j+1$ for both $u_k$ and $u_{k+1}$, and therefore
$$
\|Tx'_j - x'_{j+1}\| \le \tfrac{1}{km}\Big(\|T^{j+1}u_k - x_{j+1}\| + \|x_{j+1} - T^{j+1}u_{k+1}\|\Big) \le \frac{2Ld}{km} \le \frac{2Ld}{m} \le \frac{d}{2},
$$
where we used $k \ge 1$ and $m \ge 4L$. The estimate for $j < 0$, including the junction between $j=-1$ and $j=0$, is analogous. Hence, $\{x'_j\}$ is a $\tfrac{d}{2}$-pseudotrajectory.
\end{proof}

\begin{proof}[Proof of Theorem \ref{thm:Bernandes-shadowing}]
Let $\{x_j\}_{j \in \Zz}$ be a $d$-pseudotrajectory of $T$. Applying Lemma~\ref{lem:halving} repeatedly, we construct pseudotrajectories $\{x^{(l)}_j\}_{j \in \Zz}$, $l \ge 0$, with $x^{(0)}_j = x_j$ and, for every $l \ge 0$,
$$
\{x^{(l+1)}_j\}\ \text{is a}\ \tfrac{d}{2^{l+1}}\text{-pseudotrajectory}, \qquad \|x^{(l+1)}_j - x^{(l)}_j\| \le L\,\frac{d}{2^{l}} \quad (j \in \Zz),
$$
where the displacement bound uses that $\{x^{(l)}_j\}$ is a $\tfrac{d}{2^{l}}$-pseudotrajectory.

Fix $j \in \Zz$. The bound $\|x^{(l+1)}_j - x^{(l)}_j\| \le Ld\,2^{-l}$ shows that $\big(x^{(l)}_j\big)_{l \ge 0}$ is a Cauchy sequence in $X$; let $v_j := \lim_{l \to \infty} x^{(l)}_j$. Letting $l \to \infty$ in $\|Tx^{(l)}_j - x^{(l)}_{j+1}\| \le d\,2^{-l}$ yields $Tv_j = v_{j+1}$, so $\{v_j\}$ is an exact trajectory, that is, $v_j = T^j v$ with $v := v_0$. Finally,
$$
\|x_j - T^j v\| = \|x^{(0)}_j - v_j\| \le \sum_{l \ge 0} \|x^{(l)}_j - x^{(l+1)}_j\| \le \sum_{l \ge 0} L\,\frac{d}{2^{l}} = 2Ld.
$$
As $j \in \Zz$ was arbitrary, $T$ has the Lipschitz shadowing property with constant $2L$.
\end{proof}

\section*{Appendix B}\label{app:finite-dimensional}

In this appendix, we explain the meaning of Definition~\ref{def:CLf} on manifolds. Let $M$ be a finite-dimensional smooth compact manifold equipped with a Riemannian metric, and let $f:M\to M$ be a $C^1$-diffeomorphism.
We say that $f$ satisfies generalized hyperbolicity if the tangent-bundle analogue of Definition~\ref{def:CLf} holds: there exist $C>0$, $\lambda\in(0,1)$ and projections
\[
P_x,Q_x:T_xM\to T_xM,
\qquad
E_x^s=P_x(T_xM),\quad E_x^u=Q_x(T_xM),
\]
satisfying conditions {\rm(GH1)}--{\rm(GH3)}, with $\Bb$ replaced by
$T_xM$. As in Definition~\ref{def:CLf}, no continuity of the projections is assumed.

We now prepare the proof of Theorem~\ref{stat:finite-dimensional-characterization}.
For $p\in M$, define
\[
B^+(p)=
\left\{
v\in T_pM:
|Df^n(p)v|\to0\ \text{as }n\to+\infty
\right\},
\]
\[
B^-(p)=
\left\{
v\in T_pM:
|Df^n(p)v|\to0\ \text{as }n\to-\infty
\right\}.
\]

We use the following infinitesimal criterion of Mañé \cite{Mane1977}.

\begin{thm}[Mañé's infinitesimal criterion]
\label{thm:app-Mane}
A $C^1$-diffeomorphism $f:M\to M$ is structurally stable if and only if
\begin{equation}
\label{eq:app-Mane}
B^+(p)+B^-(p)=T_pM
\qquad\text{for every }p\in M.
\end{equation}
\end{thm}

We also use the $C^1$ stability theorem: every structurally stable $C^1$-diffeomorphism satisfies Axiom~A and the strong transversality condition \cite{Mane1987}.

For the implication from Axiom~A and strong transversality, we need the following uniform form of the Robbin--Robinson construction, recorded immediately after Definition~3 in \cite[p.~277]{PilCL}.

\begin{statement}[Uniform orbitwise $(C,\lambda)$-structure] \label{stat:uniform-CL-finite}
Assume that $f$ satisfies Axiom~A and the strong transversality condition.
Then there exist constants $C_0>0$ and $\lambda_0\in(0,1)$ such that, for every $p\in M$, the derivative sequence
$
\left\{
Df(f^k(p)):T_{f^k(p)}M\to T_{f^{k+1}(p)}M
\right\}_{k\in\Zz}
$
has a $(C_0,\lambda_0)$-structure in the sense of \cite{PilCL}.
\end{statement}

The constants in this statement are independent of the trajectory.
We recall the ingredients of the Robbin--Robinson construction responsible for this uniformity.

\noindent\textbf{Adapted metric.}
Let $|\cdot|_0$ denote the norm of the Riemannian metric fixed above.
Since $\Omega(f)$ is hyperbolic, the standard adapted-metric construction
gives a continuous Riemannian norm $|\cdot|_*$ and a number
$\theta_0\in(0,1)$ such that
\[
|Df(x)v^s|_*\leq\theta_0|v^s|_*,
\qquad
|D(f^{-1})(x)v^u|_*\leq\theta_0|v^u|_*
\]
for $x\in\Omega(f)$, $v^s\in E_{\Omega,x}^s$, and
$v^u\in E_{\Omega,x}^u$. Extend this metric to $M$. Robinson's
compatible-bundle construction then provides neighborhoods of the basic sets and a number $\theta\in(\theta_0,1)$ for which the one-step estimates
in Proposition~\ref{stat:Robinson-bundles} hold. This is the adapted
metric used in Robinson's construction; see the proof of
Theorem~5.1 in \cite{Robinson1976}.
Since $M$ is compact, the two norms are uniformly equivalent: there is
$\kappa\geq1$ such that
\begin{equation}
\label{eq:app-equivalent-metrics}
\kappa^{-1}|v|_0\leq|v|_*\leq\kappa|v|_0,
\qquad v\in TM.
\end{equation}

\noindent\textbf{Compatible bundles.}
Let
$
\Omega(f)=\Omega_1\cup\cdots\cup\Omega_m
$
be the spectral decomposition. In the notation of this paper,
Proposition~7.1 of \cite[p.~66]{Robinson1976} gives the following
statement with respect to the adapted metric.

\begin{statement}[Robinson, Proposition~7.1]\label{stat:Robinson-bundles}
There exist neighborhoods $Z_i$ of $\Omega_i$, continuous subbundles $E_i^u,E_i^s$ of $TM|_{\mathcal O(Z_i)}$, $i=1,\ldots,m$, and $\theta\in(0,1)$ such that:
\begin{itemize}
\item[(1)] $E_i^u$ and $E_i^s$ are invariant under $Df$;
\item[(2)] if $x\in\mathcal O^+(Z_j)\cap\mathcal O^-(Z_i)$, then
$
E_{i,x}^u\subseteq E_{j,x}^u$, $E_{i,x}^s\supseteq E_{j,x}^s;
$
\item[(3)]
$
E_{i,x}^u \oplus E_{i,x}^s=T_xM$, for any $x\in\mathcal O(Z_i)
$;
\item[(4)] if $x\in Z_i$, $v^s\in E_{i,x}^s$, and $v^u\in E_{i,x}^u$, then
$$
|Df(x)v^s|_*\leq\theta|v^s|_*, \qquad
|D(f^{-1})(x)v^u|_*\leq\theta|v^u|_*.
$$
\end{itemize}
Here
$
\mathcal O^+(Z)=\bigcup_{n\geq0}f^n(Z)$, 
$\mathcal O^-(Z)=\bigcup_{n\leq0}f^n(Z)$, 
$\mathcal O(Z)=\mathcal O^+(Z)\cup\mathcal O^-(Z)
$.
The one-step estimates in item~(4) are asserted for $x\in Z_i$, not for all $x\in\mathcal O(Z_i)$.
\end{statement}

The following uniform consequences are stated immediately after the proof of Proposition~7.1 in \cite[p.~66]{Robinson1976}. We label them (5) and (6) for use below; they are unnumbered in Robinson's paper.

\begin{statement}[Uniform estimates following Proposition~7.1]\label{stat:Robinson-uniform}
Let $\{\gamma_i\}_{i=1}^m$ be a $C^{\infty}$ partition of unity subordinate to the finite open cover $\{\mathcal O(Z_i)\}_{i=1}^m$. There exists an integer $q\geq0$ such that, for every $i$ and every $x\in\supp\gamma_i$,
\begin{itemize}
\item the forward orbit of $x$ lies in $\bigcup_{j\geq i}Z_j$ except at most $q$ iterates;
\item the backward orbit of $x$ lies in $\bigcup_{j\leq i}Z_j$ except at most $q$ iterates.
\end{itemize}
Set
$
b_*:=\sup_{x\in M}\max\left\{\|Df(x)\|_*,\|D(f^{-1})(x)\|_*\right\}$, 
$C_*:=\frac{b_*^q}{\theta^q}.
$
Then:
\begin{itemize}
\item[(5)] for every $x\in\supp\gamma_i$, $n\geq0$, $v^s\in E_{i,x}^s$, and $v^u\in E_{i,x}^u$,
\begin{equation}\label{eq:app-Robinson-stable}
|Df^n(x)v^s|_*\leq C_*\theta^n|v^s|_*, 
\qquad 
|D(f^{-n})(x)v^u|_*\leq C_*\theta^n|v^u|_*;
\end{equation}
\item[(6)] the stable and unstable bundles are uniformly transverse on
$\operatorname{supp}\gamma_i$. Consequently, there exists
$C_{\mathrm{proj},*}>0$, independent of $i$ and $x$, such that the
projections
\[
\pi_{i,x}^s:T_xM\to E_{i,x}^s
\quad\text{along }E_{i,x}^u,
\qquad
\pi_{i,x}^u:T_xM\to E_{i,x}^u
\quad\text{along }E_{i,x}^s
\]
satisfy
$
\|\pi_{i,x}^s\|_*,
\|\pi_{i,x}^u\|_*
\leq C_{\mathrm{proj},*}$, for
$x\in\operatorname{supp}\gamma_i.
$
\end{itemize}
\end{statement}

Indeed, compatibility allows item~(4) of Proposition~\ref{stat:Robinson-bundles} to be applied at every nonexceptional iterate. At each of the at most $q$ exceptional iterates, the norm increases by at most $b_*$. Hence
$
b_*^q\theta^{n-q}=C_*\theta^n,
$
which gives item~(5). Item~(6) follows from continuity of the bundles, compactness of $\supp\gamma_i$, and finiteness of the family.

\noindent\textbf{Return to the original metric.}
By \eqref{eq:app-equivalent-metrics}, items~(5)--(6) remain valid in the
original metric after replacing their constants by
$
C_1:=\kappa^2\max\{C_*,C_{\mathrm{proj},*}\}.
$
Thus, for $x\in\supp\gamma_i$, $n\geq0$,
$v^s\in E_{i,x}^s$, and $v^u\in E_{i,x}^u$,
\begin{equation}
\label{eq:app-original-estimates}
|Df^n(x)v^s|_0\leq C_1\theta^n|v^s|_0,
\quad
|D(f^{-n})(x)v^u|_0\leq C_1\theta^n|v^u|_0, \quad \|\pi_{i,x}^s\|_0,\|\pi_{i,x}^u\|_0\leq C_1.
\end{equation}

Consequently, all constants in the compatible-bundle construction are
uniform in the original metric. Together with the orbitwise gluing in the
Robbin--Robinson construction, this yields
Proposition~\ref{stat:uniform-CL-finite}; see also
\cite[p.~277, after Definition~3]{PilCL}.

\begin{proof}[Proof of Theorem~\ref{stat:finite-dimensional-characterization}]
Assume first that $f$ satisfies Axiom~A and the strong transversality
condition. By Proposition~\ref{stat:uniform-CL-finite}, every derivative
sequence admits a $(C_0,\lambda_0)$-structure, with constants independent
of the trajectory.

On $\Omega(f)$, use the ordinary hyperbolic splitting
$
TM|_{\Omega(f)}=E_\Omega^s\oplus E_\Omega^u,
$
whose exponential estimates and projection bounds are uniform; denote
corresponding constants by $C_\Omega>0$ and
$\lambda_\Omega\in(0,1)$.

The invariant set $M\setminus\Omega(f)$ contains no periodic points.
Choose one representative $p$ from each orbit in this set and write a
chosen $(C_0,\lambda_0)$-structure along its derivative sequence as
\[
T_{f^k(p)}M=E_k^s\oplus E_k^u,\qquad k\in\Zz.
\]
Define
$
E_{f^k(p)}^s:=E_k^s$, $E_{f^k(p)}^u:=E_k^u.
$
These definitions are unambiguous because the orbit is nonperiodic. On $\Omega(f)$, set
$
E_x^s=E_{\Omega,x}^s$,
$
E_x^u=E_{\Omega,x}^u.
$
With
$
C=\max\{C_0,C_\Omega\},
$
$
\lambda=\max\{\lambda_0,\lambda_\Omega\},
$
the associated projections satisfy {\rm(GH1)}--{\rm(GH3)} on $M$.
No continuity of the projections is required.

Conversely, assume that $f$ satisfies generalized hyperbolicity.
Condition {\rm(GH3)} gives
$
E_p^s\subseteq B^+(p)$,
$E_p^u\subseteq B^-(p).
$
Therefore,
\[
T_pM=E_p^s\oplus E_p^u
\subseteq B^+(p)+B^-(p)
\subseteq T_pM,
\]
and hence \eqref{eq:app-Mane} holds. By
Theorem~\ref{thm:app-Mane}, $f$ is structurally stable. The $C^1$
stability theorem \cite{Mane1987} then implies Axiom~A and the strong
transversality condition.
\end{proof}

\section*{Appnedix C}
\begin{proof}[Proof of Proposition \ref{stat:difconj}]
{\rm(P1)} For $f_1, f_2 \in \DUC(\Bb)$ the chain rule gives $\|D(f_1 \circ f_2)(x)\| \leq R_1R_2$ and
$
\|D(f_1 \circ f_2)(x+v) - D(f_1 \circ f_2)(x)\| \leq R_1 r_2(|v|) + R_2 r_1(R_2|v|),
$
so $f_1 \circ f_2 \in \DUC(\Bb)$; in particular $g \in \DUC(\Bb)$. The spaces
$\tilde E^{s,u}_x = Dh(h^{-1}(x))E^{s,u}_{h^{-1}(x)}$ satisfy {\rm(GH1)}--{\rm(GH3)} for $g$
with constants $R_h^2C$ and $\lambda$.

{\rm(P2)} The two inclusions in {\rm(GH2)} and the two estimates in {\rm(GH3)} interchange
under $f \to f^{-1}$, $E^s_x \leftrightarrow E^u_x$.

{\rm(P3)} Conditions {\rm(GH1)}, {\rm(GH2)} are preserved by iteration, and
$|Df^{nm}(x)v^s| \leq C\lambda^{nm}|v^s| = C(\lambda^n)^m|v^s|$.

{\rm(P4)} Since $f_i(0) = 0$ and $\|Df_i\| \leq R$, we have $|f_i(x_i)| \leq R|x_i|$, so
$f$ maps $\Bb$ into $\Bb$; the same applies to $f^{-1}$. Since $r$ is nondecreasing and
$|v_i| \leq \|v\|$,
$$
\bigl\|\{f_i(x_i+v_i) - f_i(x_i) - Df_i(x_i)v_i\}\bigr\| \leq r(\|v\|)\|v\|,
$$
$$
\|Df(x+v) - Df(x)\| = \sup_i \|Df_i(x_i+v_i) - Df_i(x_i)\| \leq r(\|v\|),
$$
hence $f \in \DUC(\Bb)$ and $Df(x) = \bigoplus_i Df_i(x_i)$. The projections
$P_x = \bigoplus_i P^{(i)}_{x_i}$ and $Q_x = \bigoplus_i Q^{(i)}_{x_i}$ are diagonal, so
$\|P_x\| = \sup_i \|P^{(i)}_{x_i}\| \leq C$, and {\rm(GH2)}, {\rm(GH3)} hold coordinatewise.
\end{proof}

\section*{Acknowledgement}
The author is grateful to E. Pujals and N. Bernardes for an introduction to the topic of generalized hyperbolicity; to the seminars of IMPA and EDAI for inspiration that led us to extend shadowing results to robustness and structural stability; to A.~Baranov, T.~Pereira, F. Petrov, S. Pilyugin, D. Smania for stimulating discussions; and to A. Gorodetski for remarks on a preliminary version of the manuscript.

The work was supported by Projeto Paz,  Coordenação de Aperfeiçoamento de Pessoal de Nível Superior - Brasil (CAPES) 23038.015548/2016-06,  FAPERJ APQ1 E-26/210.702/2024 (295291), National Council for Scientific and Technological Development – CNPq, Grants 404123/2023-6, 403673/2025-9 and Bolsa de Produtividade em Pesquisa do CNPq - Nível C.

\section*{Declaration of competing interest}
The author declares that he has no known competing financial interests or personal relationships that could have appeared to influence the work reported in this paper.

\section*{Declaration of generative AI and AI-assisted technologies
in the manuscript preparation process}
During the preparation of this work, the author used ChatGPT (OpenAI) and Claude (Antropic) to assist with language editing and readability. The author reviewed and edited all suggestions and takes full responsibility for the content of the article.

\end{document}